\renewcommand\phi{\varphi}
\newcommand\gm{\mathop{\mathbb{G}\mathrm{m}}}
\DeclareMathOperator\detRG{{detR\Gamma}}
\DeclareMathOperator\Lie{{Lie}}
\DeclareMathOperator\Pic{{Pic}}
\DeclareMathOperator\supp{{supp}}
\DeclareMathOperator\spec{{Spec}}
\DeclareMathOperator\codim{{codim}}
\DeclareMathOperator\cPic{{\mathcal{P}\!\mathit{ic}}}
\DeclareMathOperator{\Id}{Id}
\DeclareMathOperator{\rk}{rk}
\DeclareMathOperator{\Ad}{Ad}
\DeclareMathOperator{\ad}{ad}
\DeclareMathOperator{\St}{St}
\DeclareMathOperator{\HOM}{\mathcal{H}\mathit{om}}
\DeclareMathOperator{\Hom}{Hom}
\DeclareMathOperator{\Mor}{Mor}
\DeclareMathOperator{\Aut}{Aut}
\DeclareMathOperator{\Iso}{Iso}
\newcounter{noindnum}[subsection]
\renewcommand{\thenoindnum}{\roman{noindnum}}
\newcommand{\noindstep}{\refstepcounter{noindnum}{\rm(}\thenoindnum\/{\rm)} }
\newcommand{\stepzero}{\setcounter{noindnum}{0}}
\newcounter{noindnumalph}[subsection]
\renewcommand{\thenoindnumalph}{\alph{noindnumalph}}
\newcommand{\noindstepalph}{\refstepcounter{noindnumalph}{\rm(}\thenoindnumalph\/{\rm)} }
\theoremstyle{plain}
\newtheorem{theorem}{Theorem}
\newtheorem*{theorem*}{Theorem}
\newtheorem{proposition}{Proposition}[section]
\newtheorem{lemma}[proposition]{Lemma}
\newtheorem{corollary}[proposition]{Corollary}
\theoremstyle{definition}
\newtheorem{definition}[proposition]{Definition}
\theoremstyle{remark}
\newtheorem{remark}[proposition]{Remark}
\newtheorem*{remark*}{Remark}
\newtheorem{remarks}[proposition]{Remarks}
\newtheorem*{remarks*}{Remarks}
\newtheorem{example}[proposition]{Example}
\newtheorem*{conventions*}{Conventions}
\newcommand\tA{{\widetilde{A}}}
\newcommand\tB{{\widetilde{B}}}
\newcommand\tL{{\widetilde{L}}}
\newcommand\cC{{\mathcal{C}}}
\newcommand\cF{{\mathcal{F}}}
\newcommand\cE{{\mathcal{E}}}
\newcommand\cJ{{\mathcal{J}}}
\newcommand\cL{{\mathcal{L}}}
\newcommand\cM{{\mathcal{M}}}
\newcommand\cO{{\mathcal{O}}}
\newcommand\cT{{\mathcal{T}}}
\newcommand\cX{{\mathcal{X}}}
\newcommand\Z{\mathbb Z}
\newcommand\C{\mathbb C}
\newcommand\fa{\mathfrak a}
\newcommand\fg{\mathfrak g}
\newcommand\fm{\mathfrak m}
\newcommand{\Bun}{\mathcal{B}un}
\newcommand{\Hitch}{\mathcal{H}itch}
\title[Partial Fourier--Mukai transform]{Partial
Fourier--Mukai transform for integrable systems with applications to Hitchin fibration}
\author{Dima Arinkin}
\email{arinkin@math.wisc.edu}
\address{Department of mathematics, University of Wisconsin,
Madison, Wisconsin}
\author{Roman Fedorov}
\email{fedorov@math.ksu.edu}
\address{Department of Mathematics, Kansas State University, Manhattan, Kansas}
\address{Max Planck Institute for Mathematics, Bonn, Germany}
\begin{document}

\begin{abstract}
Let $X$ be an abelian scheme over a scheme $B$. The Fourier--Mukai transform gives an equivalence between the derived category of $X$ and the derived category of the dual abelian scheme. We partially extend this to certain schemes $X$ over $B$ (which we call degenerate abelian schemes) whose generic fiber is an abelian variety, while special fibers are singular.

Our main result provides a fully faithful functor from a twist of the derived category of $\Pic^\tau(X/B)$ to the derived category of $X$. Here $\Pic^\tau(X/B)$ is the algebraic space classifying fiberwise numerically trivial line bundles.

Next, we show that every algebraically integrable system gives rise to a~degenerate abelian scheme and discuss applications to Hitchin systems.
\end{abstract}

\maketitle

\section{Introduction}
Let $X$ be an abelian variety and $\check X$ be the dual abelian variety. The famous Fourier--Mukai transform~\cite{Mukai} is an equivalence of derived categories
\begin{equation}\label{eq:FM}
    D(X)\xrightarrow{\simeq} D(\check X).
\end{equation}
A similar equivalence of categories can be constructed for a family of abelian varieties, that is, for an abelian scheme. In this paper, we consider \emph{degenerate abelian schemes\/}, that is, families whose generic fiber is a torsor over an abelian variety, while some special fibers are singular projective varieties (see Definition~\ref{def:DAS} below).

\subsection{} We would like to formulate our main result in a somewhat simplified situation. Let $k$ be a field of characteristic zero; let $\pi_X:X\to B$ be a flat projective morphism of smooth connected $k$-schemes. We assume that the geometric fibers of  $\pi_X$ are integral. Denote by $A$ the locus where the morphism $\pi_X$ is smooth; set $\pi_A:=\pi_X|_A$. We assume that $A$ is given a structure of a commutative group scheme over $B$ and the action of $A$ on itself by translations extends to an action of $A$ on $X$
\[
    \mu:A\times_BX\to X.
\]
Note that $\pi_X$ is smooth over an open subset $B_0$ of $B$, thus $A$ coincides with $X$ over~$B_0$, and we see that $\pi_A^{-1}(B_0)$ is an abelian scheme over $B_0$.

Let $\Pic^\tau(X/B)$ be the algebraic space classifying fiberwise numerically trivial line bundles over $\pi_X:X\to B$. We will see that it is separated and of finite type over $B$. Moreover, since the zero section of $A$ gives a section of $\pi_X$, $\Pic^\tau(X/B)$ is a fine moduli space, that is, it admits a universal line bundle
\[
    L\to\Pic^\tau(X/B)\times_BX.
\]
Denote the canonical zero section morphism by
\[
\iota':B\to\Pic^\tau(X/B).
\]
Given a point $b\in B$, the fiber $A_b$ of $A$ over $b$ is an algebraic group. Let $\delta(A_b)$ denote the dimension of the maximal connected affine subgroup in $A_b$ (existing by the Chevalley Theorem). Our first main result is (cf.~Theorem~\ref{th:PartialFourier} and Corollary~\ref{co:PartialFourier})
\begin{theorem*}
Suppose that the group scheme $\pi_A:A\to B$ satisfies the following condition for all $k\ge0$
\begin{equation}\label{eq:condition_intro}
    \codim\overline{\{b\in B: \delta(A_b)\ge k\}}\ge k,
\end{equation}
where the bar denotes the Zariski closure. Then
\begin{itemize}
\item the morphism $\Pic^\tau(X/B)\to B$ is smooth;

\item we have
\[
    Rp_{1,*}L\simeq\iota'_*\ell[-g],
\]
where $\ell$ is a certain line bundle over $B$, and $[-g]$ is the cohomological shift by the relative dimension $g$ of $\pi_X$;

\item the Fourier--Mukai transform with kernel $L$
\[
    D(\Pic^\tau(X/B))\to D(X): \cF\mapsto Rp_{2,*}(L\otimes p^*_1\cF)
\]
is fully faithful.
\end{itemize}
\end{theorem*}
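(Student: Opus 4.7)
My plan is to reduce the three assertions to one cohomological input: the sheaves $R^i\pi_{X,*}\cO_X$ are locally free on $B$ of the expected ranks $\binom{g}{i}$, and for every geometric point $b\in B$ and every fiberwise numerically trivial line bundle $\cL_b\not\simeq\cO_{X_b}$ on $X_b$ the cohomology $R\Gamma(X_b,\cL_b)$ vanishes. I expect the main obstacle of the proof to be establishing this input; the codimension hypothesis~\eqref{eq:condition_intro} is introduced precisely to make it work.

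To prove the input, the case $b\in B_0$ is Mumford's classical vanishing on abelian schemes. For $b$ in the degenerate locus I would exploit the action $\mu$: a fiberwise numerically trivial $\cL_b$ is translation-invariant up to isomorphism, so $R\Gamma(X_b,\cL_b)$ is naturally a $\Lie A_b$-representation; via Chevalley's decomposition, non-triviality of $\cL_b$ forces a non-trivial character on the connected affine part of $A_b$, and a weight argument then kills the cohomology. Local freeness of $R^i\pi_{X,*}\cO_X$ on the whole of $B$ is then extracted from~\eqref{eq:condition_intro} via semi-continuity: the condition is calibrated so that a jump in $\dim H^i(X_b,\cO_{X_b})$ of size $k$ could only occur along the locus $\{\delta(A_b)\geq k\}$, which by hypothesis has codimension $\geq k$ and so is too small to support such a jump.

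Granted this input, smoothness of $\Pic^\tau(X/B)\to B$ is standard deformation theory of the Picard functor, since the relative tangent sheaf along $\iota'$ is $R^1\pi_{X,*}\cO_X$, now locally free of the expected rank $g$. The formula $Rp_{1,*}L\simeq\iota'_*\ell[-g]$ is then largely formal: proper base change together with the fiberwise vanishing shows the support of $Rp_{1,*}L$ is contained in the zero section $\iota'(B)$, and on that section it equals $R\pi_{X,*}\cO_X$. To recognize the result as $\iota'_*\ell[-g]$, I would apply the Koszul resolution
\[
  L\iota'^{*}\iota'_*\ell\;\simeq\;\bigoplus_{i=0}^{g}\ell\otimes\bigwedge\nolimits^{i}N^\vee[i],
\]
where $N$ is the relative tangent bundle of $\Pic^\tau/B$ along $\iota'$ (canonically isomorphic to $R^1\pi_{X,*}\cO_X$), together with the Hodge-star isomorphism $\bigwedge\nolimits^i N\simeq\bigwedge\nolimits^{g-i}N^\vee\otimes\det N$ available on the locally free $N$; matching terms then forces $\ell=R^g\pi_{X,*}\cO_X$.

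Finally, fully faithfulness of $\Phi_L:=Rp_{2,*}(L\otimes p_1^{*}(-))$ follows from the standard Mukai adjoint argument. The composition of $\Phi_L$ with its left adjoint is itself a Fourier--Mukai transform on $\Pic^\tau\times_B\Pic^\tau$, whose kernel is a convolution of $L$ with $L^\vee$. Using the group structure on $\Pic^\tau$ (present on $B_0$ and extended across $B$ by the separatedness of $\Pic^\tau/B$), one identifies this convolution with the pullback of $Rp_{1,*}L$ along the difference map $\Pic^\tau\times_B\Pic^\tau\to\Pic^\tau$; by the preceding step it is supported on the diagonal, while the shift $[-g]$ is cancelled by the dualizing shift built into the left adjoint and the twist by $\ell$ is absorbed into a normalization. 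The resulting isomorphism of the composition with $\Id_{D(\Pic^\tau)}$ is the required fully faithfulness.
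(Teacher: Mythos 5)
There is a genuine gap, and it sits exactly where you predicted the difficulty would be: your ``cohomological input'' is not provable by the fiberwise argument you sketch, and in fact cannot be established fiber by fiber at all. Your claim is that for every geometric fiber $X_b$ and every non-trivial numerically trivial $\cL_b$ one has $R\Gamma(X_b,\cL_b)=0$, because ``non-triviality of $\cL_b$ forces a non-trivial character on the connected affine part of $A_b$.'' This last implication is false. The correct fiberwise statement (Proposition~\ref{pr:nonvanishcoh} in the paper) is that non-vanishing of $H^\bullet(X_b,\cL_b)$ forces the pullback $\mu_x^*\cL_b$ to have \emph{finite order} on $A_b$; the locus of such classes is a coset of a quasi-subgroup of $A_b$ whose neutral component is an \emph{affine} subgroup of dimension up to $\delta(A_b)$ (Propositions~\ref{pp:quasigroups}, \ref{pr:affine}, \ref{pr:mainfiberwise}). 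On a singular fiber this locus is positive-dimensional, not the single point $[\cO_{X_b}]$, so no weight argument on a fixed fiber can kill the cohomology of all non-trivial $\cL_b$. (Indeed, a single singular fiber viewed as a degenerate abelian scheme over a point violates~\eqref{eq:condition_intro} whenever $\delta(A_b)>0$.) The vanishing you want is a \emph{consequence} of the theorem, obtained only at the end of a global argument; taking it as the input makes the proof circular. Relatedly, the role of~\eqref{eq:condition_intro} is not to control jumps of $\dim H^i(X_b,\cO_{X_b})$ by semicontinuity: it is used to sum the fiberwise bound $\dim\supp Rp_{1,*}P_{\Theta,\theta}|_{A_b}\le\delta(A_b)$ against the codimension of the strata $\{\delta(A_b)\ge k\}$, giving $\codim\supp Rp_{1,*}P_{\Theta,\theta}\ge g$ in total; combined with flatness this yields concentration in cohomological degree exactly $g$ (Proposition~\ref{pr:dirimpoinc}), and then Serre duality together with the fact that $\HOM(Rp_{1,*}L,\cO)$ is a line bundle on the scheme-theoretic zero section (Proposition~\ref{pr:trivlocus}) and the Koszul resolution produce $\iota'_*\ell[-g]$. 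Your version of the Koszul step also has the logic reversed: knowing that the support lies in $\iota'(B)$ and computing $L\iota'^*$ does not by itself show the complex is a pushforward from the zero section.

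Two smaller points. For smoothness, ``the tangent space along $\iota'$ has rank $g$'' is not enough: the paper constructs the morphism $\phi:A\to\Pic(X/B)$ from the Poincar\'e bundle, proves it is unramified by a genuinely non-formal computation (the trace map against $\wedge^g\fa$ and the top self-intersection of $c_1(\Theta_b)$, Proposition~\ref{pr:unramified}), deduces it is \'etale from the tangent-space count, and only then covers $\Pic^\tau$ by the open sets $\hat d^{-1}(\Pic^0(X/B))$ to get smoothness everywhere, not just along the zero section. Your final step, deducing full faithfulness from $Rp_{1,*}L\simeq\iota'_*\ell[-g]$ via the adjoint kernel and the difference map, is the standard argument the paper also invokes and is fine as stated.
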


This theorem is a generalization of the Fourier--Mukai transform for abelian schemes.

Below we relax the hypotheses of the theorem. In particular, we assume that~$A$ is a $B$-group scheme acting on~$X$ but~$A$ is not necessarily a subscheme of $X$. In this situation, the morphism $\pi_X$ might not have a section, and we must work with a stack $\cPic^\tau(X/B)$ of fiberwise numerically trivial line bundles, instead of the corresponding moduli space $\Pic^\tau(X/B)$. Note that $\cPic^\tau(X/B)$ is a $\gm$-gerbe over $\Pic^\tau(X/B)$.

Optimistically, one may hope that there exists a dual degenerate abelian scheme $\check X\to B$ and a Fourier--Mukai functor giving an equivalence of categories
\[
    FM:D(\check X)\xrightarrow{\simeq}D(X),
\]
at least under some additional assumptions. If such $\check X$ exists, it is natural to expect that
there is an open embedding $j:\Pic^\tau(X/B)\hookrightarrow\check X$ such that the partial Fourier--Mukai transform from the above theorem is equal to $FM\circ Rj_*$. Currently, we do not know how to construct $\check X$, except in some special cases, see~\cite{ArinkinAutoduality,BridgelandMaciocia}.

\subsection{Integrable systems}
It turns out that the conditions of the above theorem are satisfied for any \emph{algebraically integrable system}. As above,
assume that $k$ is a field of characteristic zero, let $X$ and $B$ be smooth connected $k$-schemes, and let $\pi_X:X\to B$ be a flat projective morphism with integral geometric fibers. In addition, assume that $X$ is symplectic and $\pi_X$ is a Lagrangian fibration, that is, $\dim X=2\dim B$ and the restriction of the symplectic form to the smooth locus of any fiber of $\pi_X$ is zero. For the purpose of the introduction, we also assume that we are given a section of $\pi_X$. We will see in Theorem~\ref{th:IntegrableSystems} that in this case the smooth locus $A$ of $\pi_X$ is canonically a group scheme over $B$, and that the action of $A$ on itself by translations extends to an action of $A$ on $X$. This statement is a~version of the Liouville theorem. We will also see that the group scheme satisfies the dimensional estimates~\eqref{eq:condition_intro} automatically; this goes back to~\cite{NgoBonn}. Thus, the above theorem applies.

Important examples of algebraically integrable systems are the Hitchin integrable systems. In this case our results form a part of \emph{the Langlands duality for the Hitchin systems}, see Section~\ref{sect:Hitchin}; this application is our primary motivation.  While there are many results on the duality for Hitchin systems for $GL(n)$ (see below), very little is known for other reductive groups, beyond the case of smooth fibers settled in~\cite{DonagiPantev}. This paper grew from our attempts to extend the results of \cite{DonagiPantev}. Note that the Langlands duality for Hitchin systems is a limit of the categorical Langlands duality.

\subsection{Compactified Jacobians}
Assume that $C$ is a smooth projective curve, then $X:=\Pic^0(C)$ is an abelian variety. Its dual variety is isomorphic to $X$; so the equivalence of categories~\eqref{eq:FM} is not surprising. However, the Fourier--Mukai transform is still highly non-trivial; for example, it takes skyscraper sheaves to line bundles. There has been a lot of progress on extending the Fourier--Mukai transform to compactified Jacobians of singular curves; see~\cite{ArinkinJacobians,ArinkinAutoduality,MeloETalJacobians,MeloETalJacobiansII} and references therein. The present paper extends the techniques of \cite{ArinkinJacobians}, and may be viewed as its continuation.

From the point of view of the Hitchin systems, compactified Jacobians of curves with planar singularities are fibers of the Hitchin systems for $GL(n)$. The results of the present paper are applicable to the Hitchin systems for other reductive groups.

\subsection{Conventions}
For a scheme $B$ the notation $b\in B$ means that $b$ is a schematic point of $B$. We denote by $k(b)$ its residue field. For an integral scheme $Z$ we denote by $k(Z)$ its field of rational functions. If $Z$ is a scheme over $B$ and $b\in B$, we put $Z_b:=Z\times_B~\spec k(b)$. If $S$ is a $B$-scheme, we sometimes denote $Z\times_BS$ by $Z_S$. We usually denote the projections from a product of two schemes onto the two factors by $p_1$ and $p_2$.

In different publications, the term `projective morphism' has slightly different meanings. When we say `$\pi_X:X\to B$ is a projective morphism', we mean `locally in the Zariski topology of $B$, the morphism factorizes as a composition of the closed embedding $X\to{\mathbb P}^n\times B$ and the projection onto $B$', where $n$ may vary.

If $\ell$ is a line bundle, $\ell^\vee$ denotes the dual line bundle.

The symbol `$\simeq$' means that two objects are isomorphic; we use the equality `$=$' to emphasize that the isomorphism is canonical.

We often work over a fixed field $k$, in which case the schemes are assumed to be locally of finite type over $k$. We do not assume that the field is algebraically closed, partly because we systematically work with generic points.

If $Z\to T$ is a smooth morphism of schemes, we denote by $\cT(Z/T)$ the relative tangent sheaf; this is a locally free sheaf on $Z$. If we work over a fixed field $k$, we denote $\cT(Z/\spec k)$ simply by $\cT(Z)$.

If $\cF_1$ and $\cF_2$ are sheaves of $\cO_Z$-modules on an algebraic space $Z$, we denote by $\HOM(\cF_1,\cF_2)$ the sheaf of homomorphisms, while $\Hom(\cF_1,\cF_2):=\Gamma(\HOM(\cF_1,\cF_2))$ is the group of (global) homomorphisms.

\subsection{Acknowledgments} The first author benefited from talks with Vladimir Drinfeld, Bao Ch{\^a}u Ng{\^o}, Tony Pantev, and Vivek Shende. He also wants to thank Preena Samuel for her remark. The second author gave talks on the subject at University of Strasbourg, Mathematical Research Institute of Oberwolfach, University of Cologne, University of Mainz, Swiss Federal Institute of Technology Lausanne, Max Planck Institute for Mathematics, the University of Sheffield, and at Warwick EPSRC Symposium. He wants to thank the organizers and the participants of the talks for the attention and the questions. He especially wants to thank H\'el\`ene Esnault and Tom Bridgeland for their interest in the subject.

The first author is partially supported by NSF grant DMS-1101558. The second author is partially supported by NSF grant DMS-1406532.

\section{Main results}\label{MainResults}
Fix a field $k$ of characteristic zero and a smooth $k$-scheme $B$ locally of finite type. Assume also that $B$ has pure dimension. For an algebraic group $G$ over a field of characteristic zero, let $\delta(G)$ denote the dimension of the maximal connected affine subgroup in $G$ (which exists by the Chevalley Theorem, see~\cite{Chevalley,MilneChevalley,BrianConradChevalley}).

\subsection{Degenerate abelian schemes}
\begin{definition}\label{def:DAS} A \emph{degenerate abelian scheme} over $B$ is a triple
\[
    (\pi_X:X\to B,\pi_A:A\to B,\mu:A\times_B X\to X),
\]
where $\pi_X:X\to B$ is a scheme over $B$, $\pi_A:A\to B$ is a group scheme over $B$, and $\mu:A\times_B X\to X$ is an action of $A$ on $X$ such that the following conditions are satisfied\\
\stepzero\noindstep $\pi_X$ is a flat Gorenstein projective morphism whose geometric fibers are non-empty, reduced, connected, and of pure dimension;\\
\noindstep\label{DAS:RelDualLB} the relative dualizing line bundle $\omega_{X/B}$ (which exists because $\pi_X$ is Gorenstein) is isomorphic to the pullback of a line bundle over $B$;\\
\noindstep $A$ is a smooth commutative $B$-group scheme of finite type;\\
\noindstep $A$ satisfies the dimensional estimates
\begin{equation}\label{eq:condition}
    \codim\overline{\{b\in B: \delta(A_b)\ge k\}}\ge k\qquad(k\ge0).
\end{equation}
\noindstep\label{DAS:gentorsor} over an open dense subset of $B$,
$X$ is an $A$-torsor;\\
\noindstep\label{DAS:FiniteStab} for every schematic point $b\in B$, there is a point $x\in X_b$ such that the stabilizer of $x$ in $A\times_Bx$ is finite.
\end{definition}
\begin{remarks}
\stepzero\noindstepalph Precisely, condition~\eqref{DAS:gentorsor} means that there is an \'etale morphism $B'\to B$ with a dense image such that $A\times_BB'$ and $X\times_BB'$ are isomorphic as $B'$-schemes with action of $A\times_BB'$.

\noindstepalph Condition~\eqref{DAS:FiniteStab} means that the fiber of the orbital morphism $A\times_Bx\to X$ over $x$ is finite.

\noindstepalph
Condition~\eqref{DAS:gentorsor} implies that over a dense open subset $B_0\subset B$, the group scheme~$A$ is an abelian group scheme.
Thus over $B_0$, the scheme $X$ is a torsor over an abelian scheme, which implies that its relative dualizing line bundle is isomorphic to the pullback of a line bundle over $B_0$. Hence by Propositions~\ref{pr:zerosection} and~\ref{pp:Cartesian}\eqref{pp:Cart2}, condition~\eqref{DAS:RelDualLB} is equivalent to the following condition\\
(\ref{DAS:RelDualLB}') The relative dualizing line bundle $\omega_{X/B}$ is fiberwise numerically trivial (see Definition~\ref{def:FibNumTriv}).\\
Also, one can show that \eqref{DAS:RelDualLB} is implied by the following condition\\
(\ref{DAS:RelDualLB}'') The locus of $b\in B$ such that the fiber $X_b$ is geometrically reducible has codimension at least two in $B$.

\noindstepalph
A somewhat non-intuitive part of the definition is dimensional estimates~\eqref{eq:condition}. These estimates play a crucial role in
Ng\^o's proof of Fundamental Lemma, see~\cite[Sect.~7.1.5]{NgoIHES}. They also appear in the computation of cohomology groups of line
bundles over compactified Jacobians, see~\cite{ArinkinJacobians}.

\noindstepalph
Note that by Proposition~\ref{pp:delta semicontinuous} the set
\[
    \{b\in B:\delta(A_b)\ge k\}
\]
is closed. Thus, Zariski closure in~\eqref{eq:condition} is not necessary. We can also re-write~\eqref{eq:condition} as
\[
    \forall b\in B:\delta(A_b)\le\codim\overline b.
\]
\end{remarks}

Let $\cPic^\tau(X/B)\to B$ be the stack classifying fiberwise numerically trivial line bundles over $X\to B$. As we will see in Section~\ref{sect:relativePic}, it is an algebraic stack of finite type over $k$. Moreover, it is
a $\gm$-gerbe over an algebraic space $\Pic^\tau(X/B)$ (the coarse moduli space of fiberwise numerically trivial line bundles).

By Proposition~\ref{pp:Cartesian}, the zero section provides a natural Cartesian diagram
\begin{equation}\label{eq:diagram}
\begin{CD}
B\times {\mathrm B}(\gm) @>\iota>>\cPic^\tau(X/B)\\
@VVV @VVV\\
B @>\iota'>>\Pic^\tau(X/B),
\end{CD}
\end{equation}
where ${\mathrm B}(\gm)=pt/\gm$ is the classifying stack of the group $\gm$. See Section~\ref{sect:relativePic} for details and precise definitions.

We may identify quasicoherent sheaves on $B\times {\mathrm B}(\gm)$ with quasicoherent sheaves on $B$ equipped with a linear action of $\gm$. If $\ell$ is a line bundle over $B$ and $i$ is an~integer, we denote by $\ell^{(i)}$ the line bundle over $B\times {\mathrm B}(\gm)$ corresponding to $\ell$ with $\gm$ acting according to the character $\lambda\mapsto\lambda^i$.

Our first main result is
\begin{theorem}\label{th:PartialFourier}
Assume that $k$ is a field of characteristic zero, $B$ is a smooth scheme of pure dimension locally of finite type over $k$, and
\[
    (\pi_X:X\to B,\pi_A:A\to B,\mu:A\times_B X\to X)
\]
is a degenerate abelian scheme. Then
\begin{enumerate}
\item\label{th:PartFourier1} the morphisms $\cPic^\tau(X/B)\to B$ and $\Pic^\tau(X/B)\to B$ are smooth;

\item\label{th:PartFourier2} let $\cL$ be the universal line bundle over $\cPic^\tau(X/B)\times_B X$. Then there is a~line bundle $\ell$ over $B$ such that
\[
    Rp_{1,*}\cL\simeq\iota_*\ell^{(1)}[-g],
\]
where $[-g]$ is the cohomological shift by the relative dimension $g$ of $\pi_X$.
\end{enumerate}
\end{theorem}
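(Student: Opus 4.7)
The plan is to establish smoothness first, via a cohomology-and-base-change analysis of $R\pi_{X,*}\cO_X$ controlled by the $A$-action and the dimensional estimate~\eqref{eq:condition}, and then to compute $Rp_{1,*}\cL$ by separately analyzing its support and its structure along the zero section.

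For Part~(\ref{th:PartFourier1}), the relative tangent complex of $\cPic^\tau(X/B)$ over $B$ at a point $(b,M)$ is $R\Gamma(X_b,\cO_{X_b})[1]$, so tangent vectors lie in $H^1(X_b,\cO_{X_b})$ and obstructions in $H^2(X_b,\cO_{X_b})$. The first key step is to prove that $h^i(X_b,\cO_{X_b})=\binom{g}{i}$ for every $b\in B$, equivalently that $R^i\pi_{X,*}\cO_X$ is locally free of rank $\binom{g}{i}$. Over the open subset $B_0\subset B$ on which $X$ is an $A$-torsor and $A$ is abelian this is classical; the reverse (upper) bound on the jump loci is the substantive content. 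Differentiating $\mu$ produces $\pi_X^*\Lie(A/B)\to\cT(X/B)$, which is an isomorphism on $X_{B_0}$; dualizing via the trivialization $\omega_{X/B}\simeq\pi_X^*\ell_0$ yields a map $\Lie(A/B)\otimes\cO_B\to R^1\pi_{X,*}\cO_X$. Together with the Serre-duality symmetry $R^i\pi_{X,*}\cO_X\simeq(R^{g-i}\pi_{X,*}\cO_X)^\vee\otimes\ell_0$, the dimensional estimate~\eqref{eq:condition} rules out any jump in $h^i$, in the spirit of Ng\^o's support theorem: a jump of size $k$ in $h^1$ over a locus $Z\subset B$ would force $\delta(A_b)\geq k$ on $Z$, whereas \eqref{eq:condition} demands $\codim Z\geq k$, and the two constraints combine to force the jump to be zero. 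Once $h^1=g$ everywhere, an obstruction analysis — using that the family $X/B$ already realizes the deformations parametrized by $B$ — gives smoothness of $\cPic^\tau(X/B)\to B$, and $\Pic^\tau(X/B)\to B$ is smooth as the rigidification of a smooth $\gm$-gerbe.

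For Part~(\ref{th:PartFourier2}), with smoothness in hand, $Rp_{1,*}\cL$ is a perfect complex on $\cPic^\tau(X/B)$. The proof splits naturally into two pieces. First, I show that $Rp_{1,*}\cL$ is set-theoretically supported on the image of $\iota$: for any $b\in B$ and any $M\in\Pic^\tau(X_b)$ not isomorphic to $\cO_{X_b}$, one has $R\Gamma(X_b,M)=0$. The argument uses the $A_b$-action: translation gives a nontrivial morphism $A_b\to\Pic^\tau(X_b)$ passing through $M$, and combining $A_b$-equivariance of the total cohomology with the orbital map from a point with finite stabilizer (existing by condition~(\ref{DAS:FiniteStab})) propagates the classical Mumford-type vanishing from the abelian quotient of $A_b$ to $X_b$. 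Second, near the zero section, Serre duality identifies $H^g(X_b,\cO_{X_b})\simeq\ell_0|_b^\vee$, singling out the line bundle $\ell:=\ell_0^\vee$ on $B$. A refined base-change analysis using the Koszul resolution of $\iota$ — which is a regular embedding of codimension $g$ by Part~(\ref{th:PartFourier1}) — matches $\wedge^i H^1(X_b,\cO_{X_b})^\vee$ against $H^{g-i}(X_b,\cO_{X_b})$ via the duality computed above, assembling the formula $Rp_{1,*}\cL\simeq\iota_*\ell^{(1)}[-g]$; the $\gm$-weight $(1)$ records the tautological weight of $\cL$ along the gerbe.

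The main obstacle is the non-local Ng\^o-type control of $h^i(X_b,\cO_{X_b})$ via the codimension estimate~\eqref{eq:condition}: the argument must synthesize the $A$-action, the Serre-duality symmetry on $R\pi_{X,*}\cO_X$, and the stratification by $\delta(A_b)$ into one coherent estimate, with the dualizing bundle $\omega_{X/B}=\pi_X^*\ell_0$ providing the bookkeeping. A secondary but related difficulty is the vanishing $R\Gamma(X_b,M)=0$ for $M\in\Pic^\tau(X_b)\setminus\{\cO_{X_b}\}$, where the classical argument for abelian varieties must be transported to the singular fibers using the $A$-action together with the finite-stabilizer condition~(\ref{DAS:FiniteStab}); once these two technical inputs are in place, the rest of the Fourier-Mukai calculation is a standard base-change-and-Koszul assembly.
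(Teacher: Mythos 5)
Your overall architecture (constancy of $h^i(X_b,\cO_{X_b})$, Serre duality, support on the zero section, Koszul resolution of a codimension-$g$ regular embedding) matches the paper's in outline, but the two steps you yourself identify as the technical core are not actually proved, and in one case the claim is false. For Part~(\ref{th:PartFourier1}): the assertion that ``a jump of size $k$ in $h^1$ over a locus $Z$ would force $\delta(A_b)\ge k$ on $Z$'' is given no mechanism, and the estimate~\eqref{eq:condition} does not enter the paper this way at all. The paper never bounds jumps of $h^i$ on $B$ directly; instead it works on the total space of $A$ with the Poincar\'e bundle $P_\Theta=\mu^*\Theta\otimes p_2^*\Theta^{-1}$ of a relatively ample $\Theta$, proves the \emph{fiberwise} bound $\dim\supp Rp_{1,*}P_{\Theta,\theta}\le\delta(A_b)$ (Proposition~\ref{pr:mainfiberwise}, which needs the whole quasi-subgroup machinery: ampleness of $\mu_x^*\Theta$ via the finite orbit map, and affineness of the neutral component $K^0_{\mu_x^*\Theta}$), combines it with~\eqref{eq:condition} to get $\codim\supp\ge g$ in $A$, and then uses flatness plus perfect amplitude $[0,g]$ to force concentration in degree $g$ (Proposition~\ref{pr:dirimpoinc}); restricting to the zero section via Koszul yields $R\pi_{X,*}\cO_X\simeq\wedge^\bullet\fa$. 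Moreover, even granting constant $h^i$, your ``obstruction analysis'' for smoothness of $\Pic^\tau(X/B)\to B$ is not supplied: $h^2(X_b,\cO_{X_b})=\binom{g}{2}\ne0$, so unobstructedness is not automatic. The paper's actual mechanism is to show the orbit map $\phi:A\to\Pic(X/B)$ is unramified (via the trace/cup-product computation identifying $\wedge^g d\phi$ with the top self-intersection of $c_1(\Theta_b)$) and then \'etale by the tangent-space count coming from $R\pi_{X,*}\cO_X\simeq\wedge^\bullet\fa$; smoothness of $\Pic^0(X/B)=\phi(A)$ is then inherited from smoothness of $A$. This step has no counterpart in your proposal.

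For Part~(\ref{th:PartFourier2}), the claimed fiberwise vanishing $R\Gamma(X_b,M)=0$ for every nontrivial $M\in\Pic^\tau(X_b)$ is not what the transported Mumford argument gives, and it is false in general. Proposition~\ref{pr:nonvanishcoh} only yields: if $H^\bullet(X_b,M)\ne0$ then $\mu_x^*M^{\otimes d}\simeq\cO_{A_b}$, i.e.\ $M$ lies in the quasi-subgroup $K_{\mu_x^*\Theta}$-coset, whose neutral component is an \emph{affine} subgroup of dimension up to $\delta(A_b)$ --- typically positive on the discriminant. If your vanishing held, the dimensional estimates~\eqref{eq:condition} would be irrelevant to Part~(\ref{th:PartFourier2}), contradicting the fact that they are the load-bearing hypothesis of Definition~\ref{def:DAS}. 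The correct statement is only that the support of $Rp_{1,*}\cL$ in $\cPic^\tau(X/B)$ has codimension $\ge g$ (fiberwise bound by $\delta(A_b)$ plus~\eqref{eq:condition}), which together with flatness forces concentration in degree $g$; the identification of the degree-$g$ part with $\iota_*\ell^{(1)}$ then comes from the elementary Proposition~\ref{pr:trivlocus}/Corollary~\ref{cor:Funiv} (the dual sheaf is a line bundle on the zero-section locus, by Lemma~\ref{lm:numtrivialtrivial}) and biduality, not from set-theoretic support being the zero section. Your final Koszul assembly is fine in spirit, but it rests on these two unestablished inputs.
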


This theorem will be proved in Section~\ref{ProofOfMainThm}.

\begin{remarks}
    \stepzero\noindstep It follows from the proof that $\ell$ is isomorphic to $\wedge^g\fa$, where $\fa$ is the~Lie algebra of $\pi_A:A\to B$.

    \noindstep In the proof of the theorem we will see that there is an open substack $\cPic^0(X/B)$ of $\cPic^\tau(X/B)$ classifying deformationally trivial line bundles. For an~abe\-lian scheme $\pi_X:X\to B$ we have $\cPic^0(X/B)=\cPic^\tau(X/B)$. We do not know if this equality remains valid for degenerate abelian schemes.
\end{remarks}

A standard argument interprets the second statement of Theorem~\ref{th:PartialFourier} as full faithfulness of a Fourier--Mukai transform.
Specifically, let $D(\cPic^\tau(X/B))$ be the quasicoherent derived category of $\cPic^\tau(X/B)$. Since $\cPic^\tau(X/B)$ is a $\gm$-gerbe, the~category decomposes by characters of $\gm$. Denote by
\[
    D^{(-1)}(\cPic^\tau(X/B))\subset D(\cPic^\tau(X/B))
\]
the full subcategory of objects $\cF\in D(\cPic^\tau(X/B))$ such that $\gm$ acts on the cohomology $H^\bullet(\cF)$ via the character
$\lambda\mapsto\lambda^{-1}$.

\begin{corollary}\label{co:PartialFourier}
In the assumptions of Theorem~\ref{th:PartialFourier}, the Fourier--Mukai functor with kernel $\cL$
\[
    D^{(-1)}(\cPic^\tau(X/B))\to D(X):\cF\mapsto Rp_{2,*}(\cL\otimes p^*_1\cF)
\]
is fully faithful.
\end{corollary}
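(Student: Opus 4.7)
The plan is to establish full faithfulness by showing the right adjoint $\Phi^R$ of the Fourier--Mukai functor satisfies $\Phi^R\Phi\cong\mathrm{id}$ on $D^{(-1)}(\cY)$, where $\cY:=\cPic^\tau(X/B)$. By Theorem~\ref{th:PartialFourier}\eqref{th:PartFourier1}, $\cY\to B$ is smooth, so $p_2\colon\cY\times_B X\to X$ is smooth and proper. Grothendieck duality then provides
\[
\Phi^R(\cG) = Rp_{1,*}\bigl(\cL^\vee\otimes p_2^*\cG\otimes p_1^*\omega_{\cY/B}\bigr)
\]
(up to the appropriate cohomological shift), and this lands in $D^{(-1)}(\cY)$ because $\cL^\vee$ has $\gm$-weight $-1$ while the other factors have weight $0$.

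To compute $\Phi^R\Phi$ as an integral transform on $\cY\times_B\cY$, I use the standard convolution on the triple product $\cY\times_B X\times_B\cY$, which via projection formula and flat base change presents the kernel as $Rq_{13,*}(q_{12}^*\cL\otimes q_{23}^*\cL^\vee)\otimes p_R^*\omega_{\cY/B}$ (modulo shifts). After reordering, $q_{12}^*\cL\otimes q_{23}^*\cL^\vee$ coincides with $(\delta\times\mathrm{id}_X)^*\cL$, where $\delta\colon\cY\times_B\cY\to\cY$ is the difference map $(L_1,L_2)\mapsto L_1\otimes L_2^\vee$. Flat base change together with Theorem~\ref{th:PartialFourier}\eqref{th:PartFourier2} yields
\[
Rq_{13,*}\bigl((\delta\times\mathrm{id}_X)^*\cL\bigr) \;=\; \delta^*Rp_{1,*}\cL \;=\; \delta^*\iota_*\ell^{(1)}[-g].
\]
A second Cartesian square identifies $\delta^*\iota_*$ with $\Delta_*$---the fiber of $\delta$ over the zero section is exactly the diagonal $\Delta\colon\cY\hookrightarrow\cY\times_B\cY$, since $L_1\otimes L_2^\vee$ is trivial iff $L_1\cong L_2$---giving $\delta^*\iota_*\ell^{(1)} = \Delta_*(\pi_\cY^*\ell)$. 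Applying projection formula for $\Delta_*$ together with $p_R\circ\Delta=\mathrm{id}_\cY$ then produces
\[
\Phi^R\Phi\,\cF \;\cong\; \cF\otimes\pi_\cY^*\ell\otimes\omega_{\cY/B} \qquad\text{for every } \cF\in D^{(-1)}(\cY).
\]

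The principal obstacle is the last step: showing that the twisting line bundle $\pi_\cY^*\ell\otimes\omega_{\cY/B}$ is \emph{canonically} trivial, so that the unit of the adjunction furnishes the desired natural isomorphism $\cF\to\Phi^R\Phi\cF$ (and not merely a scalar multiple of it). Given $\ell\cong\wedge^g\fa$ from the remark after Theorem~\ref{th:PartialFourier}, this reduces to the Serre-duality identification $\omega_{\cY/B}\cong\pi_\cY^*\wedge^g\Lie(\cY/B)^\vee\cong\pi_\cY^*\ell^\vee$---a perfection of the Poincar\'e pairing between $\fa$ and $\Lie(\cY/B)$ that is standard for abelian schemes but requires care in the degenerate setting. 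Making this identification precise, and tracking the $\gm$-weights together with the cohomological shifts (which interact subtly with the gerbe structure) throughout the convolution, is the delicate part; everything else is routine manipulation of Fourier--Mukai kernels.
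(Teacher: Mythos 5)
The paper itself gives no written proof of this corollary (it is dispatched as ``a standard argument'' converting Theorem~\ref{th:PartialFourier}\eqref{th:PartFourier2} into full faithfulness), and your convolution strategy is that standard argument in outline; but it contains a genuine error at the very first step. You dualize along $p_2\colon\cPic^\tau(X/B)\times_BX\to X$, asserting it is ``smooth and proper.'' It is smooth by Theorem~\ref{th:PartialFourier}\eqref{th:PartFourier1}, but it is \emph{not} proper in general: $p_2$ is a base change of $\cPic^\tau(X/B)\to B$, and over a point $b$ with $\delta(A_b)>0$ the fiber $\Pic^0(X_b)$ is (up to a finite \'etale cover) the group $A_b$, which contains a positive-dimensional affine subgroup --- already for a compactified Jacobian of a nodal curve one has $\Pic^0(C)\simeq\gm$. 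Consequently the formula $\Phi^R(\cG)=Rp_{1,*}(\cL^\vee\otimes p_2^*\cG\otimes p_1^*\omega_{\cPic^\tau(X/B)/B})$ is not the right adjoint of $\Phi$: the identity $f^{!}=f^*\otimes\omega_f[d]$ and the duality adjunction require properness of $f$. The repair is to dualize along the \emph{other} projection: $p_1\colon\cPic^\tau(X/B)\times_BX\to\cPic^\tau(X/B)$ is a base change of the projective, flat, Gorenstein morphism $\pi_X$, hence proper with $p_1^{!}=p_1^*\otimes p_2^*\omega_{X/B}[g]$. This produces a \emph{left} adjoint $\Phi^L(\cG)=Rp_{1,*}(\cL^\vee\otimes p_2^*\cG\otimes p_2^*\omega_{X/B})[g]$, and full faithfulness is equivalent to the counit $\Phi^L\Phi\to\mathrm{id}$ being an isomorphism. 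Your convolution computation then goes through with $\omega_{X/B}$ in place of $\omega_{\cPic^\tau(X/B)/B}$, and Definition~\ref{def:DAS}\eqref{DAS:RelDualLB} is exactly what makes it work: $\omega_{X/B}\simeq\pi_X^*\ell_1$ is pulled back from $B$, so the dualizing twist passes through $Rq_{13,*}$ harmlessly and the kernel of $\Phi^L\Phi$ becomes $\delta^*(Rp_{1,*}\cL)\otimes(\cdots)[g]=\Delta_*(\text{line bundle})$ in degree zero by Theorem~\ref{th:PartialFourier}\eqref{th:PartFourier2}.

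Your flagged ``principal obstacle'' --- producing a \emph{canonical} trivialization of the residual twist via a duality between $\fa$ and the Lie algebra of the Picard space --- is not needed, and in the corrected argument the sheaf $\omega_{\cPic^\tau(X/B)/B}$ never appears. Once one knows $\Phi^L\Phi\,\cF\simeq\cF\otimes N$ functorially for some line bundle $N$ on $\cPic^\tau(X/B)$, the counit is determined by a morphism $N\to\cO$, and it suffices to check this morphism is nonzero at every geometric point $y$ lying over $b\in B$. Evaluating the counit on the weight~$(-1)$ skyscraper at $y$, the map $k=\Hom(k(y),k(y))\to\Hom(\Phi k(y),\Phi k(y))$ sends $1$ to the identity of $\Phi k(y)$, which is the pushforward to $X$ of the line bundle $\cL|_{y\times_BX}$ on $X_b\otimes k(y)$ and in particular nonzero; hence the counit is fiberwise nonzero and therefore an isomorphism. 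This sidesteps entirely the identification $\omega\simeq\pi^*\ell^\vee$ that you (correctly) identified as delicate.
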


\begin{remark}\label{rm:section} Suppose that $\pi_X:X\to B$ admits a section $\zeta_X:B\to X$. Then $\Pic^\tau(X/B)$ is a fine moduli space, that is, there is a universal line bundle $L$ over $\Pic^\tau(X/B)\times_BX$; this line bundle $L$ gives a trivialization of the $\gm$-gerbe $\cPic^\tau(X/B)\to\Pic^\tau(X/B)$, see Section~\ref{sect:relativePic}. In this case Theorem~\ref{th:PartialFourier}\eqref{th:PartFourier2} can be stated simply as
\[
    Rp_{1,*}L\simeq\iota'_*\ell[-g],
\]
for a line bundle $\ell$ over $B$. (Here $\iota'$ is the bottom arrow in~\eqref{eq:diagram}.) Similarly, the~Fourier--Mukai functor $D(\Pic^\tau(X/B))\to D(X)$ given by $L$ is fully faithful.
\end{remark}

\subsection{Example: integrable systems}
It turns out that any algebraically completely integrable system gives rise to a degenerate abelian scheme.
Thus the above theorem and its corollary apply in this situation. Let us give the definitions. As before, let $k$ be a field of characteristic zero.

\begin{definition}\label{def:IS} Let $B$ be a smooth $k$-scheme of pure dimension $g$. An \emph{algebraically completely integrable system\/} (or just an \emph{integrable system} for brevity) over $B$ is a~pair $(\pi_X:X\to B,\omega)$, where $\pi_X:X\to B$ is a~scheme over $B$, $\omega\in H^0(X,\wedge^2\Omega_X)$ is a 2-form such that the following conditions are satisfied\\
\stepzero
\noindstep $X$ is smooth over $k$;\\
\noindstep $\pi_X$ is a flat projective morphism whose geometric fibers are non-empty and integral of dimension $g$;\\
\noindstep $\omega$ is a symplectic form on $X$;\\
\noindstep the smooth loci of the fibers of $\pi_X$ are Lagrangian (that is, the restriction of~$\omega$ to the smooth locus of any fiber is zero).
\end{definition}

Note that $\dim X=2g$ since $\pi_X$ is a flat morphism of relative dimension $g$ and $\dim B=g$. Denote by $X^{sm}$ the smooth locus of the projection $X\to B$. Our second main result is

\begin{theorem}\label{th:IntegrableSystems}
Let $(\pi_X:X\to B,\omega)$ be an integrable system. Then
\begin{enumerate}
    \item there is a canonically defined smooth commutative group scheme $\pi_A:A\to B$ and an action $\mu:A\times_BX\to X$ such that
    $X^{sm}$ is a torsor over $A$;
    \item the triple $(\pi_X,\pi_A,\mu)$ is a degenerate abelian scheme in the sense of Definition~\ref{def:DAS}.
\end{enumerate}
\end{theorem}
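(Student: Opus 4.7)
The first step is the algebraic Liouville construction. The symplectic form $\omega$ identifies $\mathcal T(X)\xrightarrow{\sim}\Omega^1_X$, and composition with the pullback $\pi_X^*\Omega^1_B\to\Omega^1_X$ yields a morphism
\[
    \phi:\pi_X^*\Omega^1_B\longrightarrow\mathcal T(X).
\]
Because the smooth loci of the fibers are Lagrangian, the image of $\phi$ consists of vector fields that are vertical for $\pi_X$ (the identity $\omega^{-1}(\pi_X^*\alpha,\pi_X^*\beta)=0$ holds on the dense open $X^{sm}$, hence everywhere); a rank count then identifies $\phi|_{X^{sm}}$ with an isomorphism $\pi_X^*\Omega^1_B|_{X^{sm}}\xrightarrow{\sim}\mathcal T(X^{sm}/B)$. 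The Lagrangian property also forces the Poisson brackets of pull-back functions to vanish, so the vector fields $\phi(\pi_X^*\alpha)$ pairwise commute; this gives an infinitesimal commutative action of $\Omega^1_B$ on $X$ over $B$, free and transitive on $X^{sm}$.

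Second, I would integrate this infinitesimal action to a smooth commutative $B$-group scheme $\pi_A:A\to B$ acting on $X$. \'Etale-locally on $B$ the smooth morphism $X^{sm}\to B$ admits sections; after fixing one, $X^{sm}$ acquires a commutative group structure integrating $\phi$. Since any two such structures differ by a translation, \'etale descent produces a commutative $B$-group scheme $A$ with $\Lie(A/B)=\Omega^1_B$, acting on $X^{sm}$ and making it an $A$-torsor \'etale-locally. The vector fields $\phi(\pi_X^*\alpha)$ are globally defined on $X$ and vertical, so the infinitesimal $A$-action extends across $X\setminus X^{sm}$; projectivity of $\pi_X$ together with density of $X^{sm}$ then exponentiates this infinitesimal extension to a genuine morphism $\mu:A\times_BX\to X$ extending the torsor structure.

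With $(\pi_X,\pi_A,\mu)$ in hand, the next step is to verify the axioms of Definition~\ref{def:DAS}. Conditions (i), (iii), (v) are essentially automatic: flatness of $\pi_X$ follows by miracle flatness from the smoothness of $X$ and $B$ together with the constant fiber dimension $g$, the Gorenstein property is clear because $X$ is smooth, and the generic torsor structure holds since over the open locus where $\pi_X$ is smooth we have $X=X^{sm}$. For (ii), the top exterior power of $\phi|_{X^{sm}}$ gives $\omega_{X^{sm}/B}\cong\pi_X^*\det\Omega^1_B$; since $X$ is normal and $X\setminus X^{sm}$ has codimension $\ge 1$, this isomorphism of line bundles extends to $\omega_{X/B}\cong\pi_X^*\det\Omega^1_B$. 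For (vi), a smooth point $x\in X_b^{sm}$ has $0$-dimensional, hence finite, stabilizer in $A_b$, because the infinitesimal action $\Lie(A_b)=\Omega^1_B|_b\to T_xX_b^{sm}$ is an isomorphism by the first step.

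The main obstacle is the dimensional estimate (iv): $\codim\overline{\{b\in B:\delta(A_b)\ge k\}}\ge k$. Since $\dim A_b=g=\dim B$ for every $b$ by construction, this is a genuine constraint linking the size of the affine part of $A_b$ to the codimension of the locus where that degeneration occurs. I would follow the $\delta$-regularity argument of Ng\^o~\cite[Sect.~7.1.5]{NgoIHES}: on the closed stratum $B_k=\{b:\delta(A_b)\ge k\}$ the family of affine parts of $A_b$ has fiber dimension at least $k$, while the symplectic structure on $X$ together with the identification $\Lie(A/B)=\Omega^1_B$ forces at least $k$ independent weight directions to appear in the conormal $N^*_{B_k/B}|_b$, yielding $\codim B_k\ge k$. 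Adapting Ng\^o's proof, originally written for the Hitchin base, to the abstract integrable-system setting of Definition~\ref{def:IS} is the technical heart of the theorem.
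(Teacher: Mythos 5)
Your overall strategy (Hamiltonian vector fields, an algebraic Liouville theorem, Ng\^o's estimate for condition (iv)) matches the paper's, but the central step of part (1) has a genuine gap. You propose to integrate the infinitesimal $\Omega_B$-action to a group structure on $X^{sm}$ \'etale-locally and then to ``exponentiate'' the vertical vector fields to extend the action across $X\setminus X^{sm}$. Neither operation is available algebraically: commuting vector fields on a scheme do not integrate to a group action in the Zariski or \'etale topology, and a group action defined on a dense open subscheme of a proper scheme need not extend to the whole scheme (the induced rational map $A\times_BX\dashrightarrow X$ can have genuine indeterminacy even when $X$ is smooth). The paper avoids both problems at once by defining $A'$ as the closed subgroup scheme of $\Aut_B(X)$ centralizing the image of the Hamiltonian map $h_{fppf}:(\Omega_B)_{fppf}\to\pi_{X,*}\cT_{fppf}(X/B)$ --- so the action on all of $X$ is built into the definition --- and then computing $\Lie(A'/B)=(\Omega_B)_{fppf}$ (using that the Hamiltonians are their own centralizer, checked on $X^{sm}$) and invoking a characteristic-zero criterion: a group scheme whose fppf Lie algebra is a locally free coherent module is smooth along the unit section. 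The torsor property of $X^{sm}$ then follows because the orbit map through a local section is \'etale with trivial stabilizer and the fibers are irreducible. Without some such device, your construction of $(A,\mu)$ does not go through.

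Two smaller points. For condition (iv), your appeal to Ng\^o is the right move, but the sketch via ``weight directions in the conormal'' is not the argument; the actual mechanism is that the symplectic form identifies the adjoint of the infinitesimal action $\Lie(A/B)_{\pi_X(x)}\to\cT_xX$ with $d\pi_X(x)$, so $\{x:\dim\St_x\ge k\}=\{x:\rk d\pi_X(x)\le g-k\}$, and generic smoothness (characteristic zero) bounds the dimension of its image under $\pi_X$ by $g-k$; combined with $\delta(A_b)=\max_x\dim\St_x$ this gives the estimate directly. For condition (ii), your extension of the isomorphism $\omega_{X^{sm}/B}\simeq\pi_X^*\det\Omega^1_B$ needs $\codim(X\setminus X^{sm})\ge 2$, not $\ge 1$ as you wrote (this does hold here, by generic smoothness of $\pi_X$); the paper's one-line alternative is $\omega_{X/B}\simeq\omega_X\otimes\pi_X^*\omega_B^{-1}$ with $\omega_X\simeq\cO_X$ because $X$ is symplectic.
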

This theorem will be proved in Section~\ref{IntegrableSystems}. The first statement is an algebraic version of the Liouville theorem. Surprisingly, we were unable to find it in the literature. The main part of the second statement is that $A$ satisfies the dimensional estimates~\eqref{eq:condition}; this goes back to Ng\^o~\cite{NgoBonn}. Combining this theorem with Theorem~\ref{th:PartialFourier} and Corollary~\ref{co:PartialFourier}, we get
\begin{corollary}\label{cor:IntegrableSystems}
Let $(\pi_X:X\to B,\omega)$ be an integrable system. Then
\begin{enumerate}
\item the morphisms $\cPic^\tau(X/B)\to B$ and $\Pic^\tau(X/B)\to B$ are smooth;

\item let $\cL$ be the universal line bundle over $\cPic^\tau(X/B)\times_B X$. Then there is a~line bundle $\ell$ over $B$ such that
\[
    Rp_{1,*}\cL\simeq\iota_*\ell^{(1)}[-g];
\]
\item the Fourier--Mukai functor with kernel $\cL$
\[
    D^{(-1)}(\cPic^\tau(X/B))\to D(X):\cF\mapsto Rp_{2,*}(\cL\otimes p^*_1\cF)
\]
is fully faithful.
\end{enumerate}
\end{corollary}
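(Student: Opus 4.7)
The plan is a direct bootstrap from the preceding results: an integrable system provides all the data of a degenerate abelian scheme, at which point the conclusions of the corollary will be precisely those of Theorem~\ref{th:PartialFourier} and Corollary~\ref{co:PartialFourier}. Concretely, I would first apply Theorem~\ref{th:IntegrableSystems} to $(\pi_X:X\to B,\omega)$ to produce the canonical smooth commutative group scheme $\pi_A:A\to B$ and the action $\mu:A\times_BX\to X$, and to record that the triple $(\pi_X,\pi_A,\mu)$ satisfies all six conditions of Definition~\ref{def:DAS}. After this step the symplectic form $\omega$ has served its purpose and can be dropped from consideration: only the degenerate abelian scheme structure enters the remainder of the argument.

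With $(\pi_X,\pi_A,\mu)$ in hand, I would deduce parts~(i) and~(ii) of the corollary directly from the two conclusions of Theorem~\ref{th:PartialFourier}, namely the smoothness of $\cPic^\tau(X/B)$ and $\Pic^\tau(X/B)$ over $B$ and the existence of a line bundle $\ell$ over $B$ with $Rp_{1,*}\cL\simeq\iota_*\ell^{(1)}[-g]$. Part~(iii) will then follow by applying Corollary~\ref{co:PartialFourier} to the same data. No further argument is needed inside this corollary beyond invoking the three cited results.

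The main obstacle therefore lies not in this corollary but upstream, in Theorem~\ref{th:IntegrableSystems}: one must squeeze out of the symplectic and Lagrangian hypotheses both an algebraic version of the Liouville theorem (producing the group scheme $A$ and its action on $X$, together with the torsor structure on $X^{sm}$) and, more subtly, the dimensional estimates~\eqref{eq:condition} on the affine parts of the fibers $A_b$. The latter are the genuinely non-formal input and are essentially Ng\^o's observation from~\cite{NgoBonn}. One also has to verify the more routine items in Definition~\ref{def:DAS} — Gorenstein-ness of $\pi_X$, triviality of $\omega_{X/B}$ up to pullback, and the finite-stabilizer condition~\eqref{DAS:FiniteStab} — but these should fall out of the Lagrangian fibration geometry. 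Once all of this is granted, the passage from an integrable system to a degenerate abelian scheme is purely formal, and the Fourier--Mukai statement reduces, as outlined, to a citation of the partial Fourier--Mukai theorem for degenerate abelian schemes.
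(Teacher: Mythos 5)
Your proposal is correct and matches the paper exactly: the corollary is stated there as an immediate consequence of combining Theorem~\ref{th:IntegrableSystems} (which upgrades the integrable system to a degenerate abelian scheme) with Theorem~\ref{th:PartialFourier} and Corollary~\ref{co:PartialFourier}. Your remarks correctly locate the real work upstream in Theorem~\ref{th:IntegrableSystems}; nothing further is needed here.
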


\subsection{Hitchin systems}\label{sect:Hitchin}
The Hitchin systems provide examples of integrable systems particularly important for the geometric Langlands Program. Below we sketch the applications of our results to the Hitchin systems. We plan to return to this subject in detail in the future.

We assume for simplicity that $k$ is the field of complex numbers. Let $C$ be a~smooth projective curve over $k$ of genus at least two; let $G$ be a simple $k$-group. For simplicity we assume that $G$ is simply-connected; denote by $Z(G)$ its center. Denote by $\Omega_C$ the invertible sheaf of differentials on $C$. Let $\Bun_G=\Bun_G(C)$ be the moduli stack of principal $G$-bundles over $C$. Consider its cotangent stack $T^*\Bun_G$. It follows from deformation theory that $T^*\Bun_G$ parameterizes Higgs pairs, that is, pairs $(\cE,\Phi)$, where $\cE$ is a principal $G$-bundle over $C$, $\Phi$ is a section of $\ad\cE\otimes\Omega_C$ ($\ad\cE$ is the adjoint vector bundle).

Let $\fg$ be the Lie algebra of $G$. Consider \emph{the Hitchin fibration}
\begin{equation*}
    p:T^*\Bun_G\to\Hitch_G:=\bigoplus_{i=1}^l H^0(C,\Omega_C^{\otimes d_i}),
\end{equation*}
where $l$ is the rank of $\fg$, and $d_1,\ldots,d_l$ are the exponents of $\fg$. Recall the definition of  $p$. The algebra $\C[\fg]^G$ of $\Ad$-invariant functions on $\fg$ is freely generated by certain homogeneous polynomials $P_1$,\ldots,$P_l$ such that $\deg P_i=d_i$ (this can be taken as the definition of exponents $d_i$). Take $(\cE,\Phi)\in T^*\Bun_G$. We can trivialize $\cE$ locally over $C$ (this can be done locally in the Zariski topology by Steinberg's theorem; on the other hand, the \'etale topology would suffice for our purposes). Then $\Phi$ becomes a $\fg$-valued 1-form. Applying $P_i$ to this 1-form, we get a local section of $\Omega_C^{\otimes d_i}$. However, since $P_i$ is $\Ad$-invariant, this section does not depend on the trivialization of $\cE$. We see that the local sections of $\Omega_C^{\otimes d_i}$ thus obtained patch into a well-defined global section of $\Omega_C^{\otimes d_i}$. Performing this procedure for $i=1,\ldots,l$, we get the Hitchin map.

By~\cite[Thm.~2.2.4]{BeilinsonDrinfeldHitchin} $p$ is flat and its fibers are Lagrangian (see also the references therein).

Let $g\in G$ be a semisimple element. The centralizer $Z_G(g)$ of $g$ in $G$ is a reductive subgroup. It is easy to show that, up to conjugation, we obtain only finitely many reductive subgroups in this way. For every semisimple $g$, we have a finite morphism $\Hitch_{Z_G(g)}\to\Hitch_G$. This morphism only depends on the conjugacy class of $Z_G(g)$ in $G$, thus there are only finitely many different morphisms. Define $B$ to be the (open) complement in $\Hitch_G$ of images of $\Hitch_{Z_G(g)}$, where $g$ ranges over non-central semisimple elements of $G$. Set
\[
    \cX:=T^*\Bun_G\times_{\Hitch_G}B.
\]
One checks that $\cX$ is a $Z(G)$-gerbe over a smooth scheme $X$. Moreover, one can show that the morphism $\pi_X:X\to B$ is projective and has integral fibers. It is clear that $X$ carries a canonical symplectic form $\omega$ and the pair $(\pi_X:X\to B,\omega)$ is an integrable system. Thus we can apply Theorem~\ref{th:IntegrableSystems} and Corollary~\ref{cor:IntegrableSystems}. We see that there is a fully faithful embedding
\[
    D^{(-1)}(\cPic^\tau(X/B))\hookrightarrow D(X).
\]
The projection $X\to B$ admits a Kostant section, so (see Remark~\ref{rm:section}) we can re-write this embedding as
\begin{equation}\label{eq:HitchinFullyFaithful}
    D(\Pic^\tau(X/B))\hookrightarrow D(X).
\end{equation}

\subsubsection{The relation to the Langlands duality for Hitchin systems}
Let $\check G$ be the group Langlands dual to $G$ (thus $\check G$ is of adjoint type and its Dynkin diagram is obtained from that of $G$ by reversing the arrows). It is easy to identify $\Hitch_G$ with $\Hitch_{\check G}$. Consider the `dual' Hitchin system $\check p:T^*\Bun_{\check G}\to\Hitch_G$.

\emph{The Langlands duality for Hitchin systems\/} in its naive form predicts that there is a coherent sheaf $\tL$ on $T^*\Bun_{\check G}\times_{\Hitch_G}T^*\Bun_G$ such that the corresponding integral transform
\begin{equation}\label{eq:HitchinLanglands}
    D(T^*\Bun_{\check G})\to D(T^*\Bun_G):\cF\mapsto Rp_{2,*}(\tL\otimes^L p^*_1\cF)
\end{equation}
is an equivalence of categories. We do not expect this to hold literally but we expect this to hold over a big open subspace of
the Hitchin base $\Hitch_G$. Generically over the base, the conjecture is settled in~\cite[Thm.~A]{DonagiPantev}.

Let $B$ and $\cX$ be as above. Set $\check\cX:=T^*\Bun_{\check G}\times_{\Hitch_G}B$. Since $\check G$ is of adjoint type, the connected components of $\Bun_{\check G}$ are indexed by
\[
    Z':=\pi_1(\check G)=\Hom(Z(G),\gm),
\]
so the same is true for $\check\cX$: $\check\cX=\coprod_{i\in Z'}\check\cX^{(i)}$. Moreover, $\check\cX$ is a scheme projective over $B$.

Restricting the equivalence~\eqref{eq:HitchinLanglands} to $B$, we expect an equivalence of categories
\[
\prod_{i\in Z'}D(\check\cX^{(i)})=D(\check\cX)\to D(\cX)=\prod_{i\in Z'}D^{(i)}(\cX)
\simeq\prod_{i\in Z'}D(X),
\]
where $D^{(i)}(\cX)$ is the full subcategory of $D(\cX)$ of objects on which $Z(G)$ acts according to $i$. Denote by $Y$ the smooth locus of the projection $\check\cX^{(0)}\to B$. Restricting the above equivalence of categories to the unity of $Z'$ and composing it with the natural inclusion $D(Y)\to D(\check\cX^{(0)})$, we get a fully faithful functor
\[
D(Y)\hookrightarrow D(X).
\]
We expect this to coincide with~\eqref{eq:HitchinFullyFaithful}.

\subsection{Compactified Jacobians} In~\cite{ArinkinJacobians}, the following situation is considered. Let $\cM$ be the moduli stack of integral projective curves of fixed genus $g$ with planar singularities.
Denote by $\cC\to\cM$ the universal curve over $\cM$. We then have the universal Jacobian $\cJ:=\Pic^\tau(\cC/\cM)$ and the universal compactified Jacobian $\overline{\cJ}:=\overline{\Pic^\tau}(\cC/\cM)$. Since our results are local in smooth topology over $B$, they immediately extend to the case when $B$ is a smooth stack. Thus
we can take $B=\cM$, $A=\cJ$, and $X=\overline{\cJ}$. Then Theorem~\ref{th:PartialFourier}\eqref{th:PartFourier2} becomes essentially Theorem~10 of~\cite{ArinkinJacobians}, which implies the main results of~\cite{ArinkinJacobians}.

\section{The relative Picard stack}\label{sect:relativePic}
In this section, we summarize the properties of the relative Picard space. The key results go back to Grothendieck's talks~\cite{FGAV,FGAVI}. The theory of relative Picard scheme is developed in detail in~\cite{KleimanFGA}, which we use as our primary reference.

\subsection{} Let $B$ be a locally Noetherian scheme, and let $\pi_X:X\to B$ be a flat projective morphism. In addition, suppose that the geometric fibers of $\pi_X:X\to B$ are non-empty, reduced, and connected.

\begin{lemma}\label{lm:dirimagestructsheaf} The natural morphism $\cO_B\to\pi_{X,*}\cO_X$ is an isomorphism.
\end{lemma}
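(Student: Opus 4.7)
\smallskip
The plan is to combine standard cohomology and base change with the observation that, fiberwise, $H^0(X_b,\cO_{X_b})$ is one-dimensional over $k(b)$.

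First I would verify that $H^0(X_{\bar b},\cO_{X_{\bar b}})=k(\bar b)$ for every geometric point $\bar b$ of $B$. By hypothesis, $X_{\bar b}$ is a reduced, connected, proper scheme over the algebraically closed field $k(\bar b)$; by a classical argument (global regular functions on a reduced proper scheme over an algebraically closed field form a finite, hence integral, hence equal to the base, extension on each connected component), the space of global sections is exactly $k(\bar b)$. To transfer this to an arbitrary schematic point $b\in B$, I would use flat base change along $\spec\overline{k(b)}\to\spec k(b)$ for the proper morphism $X_b\to\spec k(b)$: this gives $H^0(X_b,\cO_{X_b})\otimes_{k(b)}\overline{k(b)}=\overline{k(b)}$, so $H^0(X_b,\cO_{X_b})=k(b)$.

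Next I would invoke cohomology and base change for the flat projective morphism $\pi_X$ (Grauert's theorem, or the standard semicontinuity package in \cite{KleimanFGA}). Since the function $b\mapsto \dim_{k(b)}H^0(X_b,\cO_{X_b})$ is the constant function $1$, the sheaf $\pi_{X,*}\cO_X$ is locally free of rank one on $B$ and its formation commutes with arbitrary base change. In particular, for every $b\in B$ the natural map
\[
    (\pi_{X,*}\cO_X)\otimes_{\cO_B}k(b)\longrightarrow H^0(X_b,\cO_{X_b})
\]
is an isomorphism.

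Finally, I would examine the adjunction map $\phi:\cO_B\to\pi_{X,*}\cO_X$. Both source and target are locally free of rank one, so $\phi$ is a morphism of line bundles. Restricted to the fiber at $b$, it becomes the composition $k(b)\to H^0(X_b,\cO_{X_b})=k(b)$, which sends $1$ to $1$ and is therefore an isomorphism. By Nakayama's lemma $\phi$ is surjective, and any surjection of line bundles on a locally Noetherian scheme is an isomorphism. This completes the argument; there is no substantial obstacle, the only mild subtlety being the passage from geometric fibers to arbitrary schematic fibers, which is handled by the flat base change step above.
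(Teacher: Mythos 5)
Your overall strategy is the standard one (the paper itself gives no argument here, only a citation to Kleiman's notes), and the first and last steps are fine: a reduced, connected, proper scheme over an algebraically closed field has $H^0=k$, flat base change transfers this to schematic points, and once one knows $\pi_{X,*}\cO_X$ is a line bundle whose formation commutes with base change, the Nakayama argument finishes. The gap is in the middle step. Grauert's theorem (``constant $h^0$ implies $\pi_{X,*}\cO_X$ is locally free and compatible with base change'') requires the base to be \emph{reduced}, whereas the lemma is stated for an arbitrary locally Noetherian $B$; moreover the very next statement, Corollary~\ref{cor:cohflat}, needs the conclusion after arbitrary (in particular non-reduced) base change, so one cannot simply add a reducedness hypothesis. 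The implication you invoke is genuinely false without reducedness at the level of the underlying linear algebra: representing $R\pi_{X,*}\cO_X$ by a bounded complex $K^0\to K^1\to\cdots$ of finite free modules compatibly with base change, constancy of $h^0$ only says that $d^0\otimes k(b)$ has constant rank, and over $A=k[\epsilon]/(\epsilon^2)$ the map $d^0=\epsilon:A\to A$ has constant fiberwise rank $0$ while $\ker d^0=(\epsilon)$ is not free and its formation does not commute with base change.

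The repair is small but uses an input your argument never touches: the unit section. The composite
$k(b)=\cO_B\otimes k(b)\to(\pi_{X,*}\cO_X)\otimes k(b)\xrightarrow{\ \phi^0(b)\ }H^0(X_b,\cO_{X_b})=k(b)$
is the identity, so the comparison map $\phi^0(b)$ is \emph{surjective} for every $b$. The cohomology-and-base-change theorem in the form that has no reducedness hypothesis (e.g.\ the second theorem of \cite[Sect.~5]{MumfordAbelian}) then gives that $\phi^0$ is an isomorphism near $b$ and, since $\phi^{-1}(b)$ is trivially surjective, that $\pi_{X,*}\cO_X$ is locally free, necessarily of rank $\dim_{k(b)}H^0(X_b,\cO_{X_b})=1$, with formation commuting with base change. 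After this substitution your concluding step goes through verbatim, and the universal statement of Corollary~\ref{cor:cohflat} comes for free. (Alternatively, if you only ever work over the reduced bases appearing in the paper's main theorems, your appeal to Grauert is legitimate, but then the lemma would be proved in less generality than it is stated and used.)
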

\begin{proof} See~\cite[Exercise~3.11]{KleimanFGA} (the proof of exercise is given in~\cite[Appendix~A]{KleimanFGA}).
\end{proof}

\begin{corollary}\label{cor:cohflat}
$\cO_B\simeq\pi_{X,*}\cO_X$ holds universally. That is,
for any morphism of locally Noetherian schemes $B'\to B$, the natural morphism $\cO_{B'}\to\pi'_{X,*}\cO_{X'}$ is an isomorphism. Here
$X':=X\times_BB'$ and $\pi'_X:X'\to B'$ is the natural projection. \qed
\end{corollary}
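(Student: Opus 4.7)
The plan is to deduce the corollary directly from Lemma~\ref{lm:dirimagestructsheaf} by applying it to the base-changed family. The main observation is that the three hypotheses of the lemma---flatness, projectivity, and the property that the geometric fibers are reduced and connected---are all stable under arbitrary base change. Thus the lemma already applies to $\pi'_X:X'\to B'$ with no further work, provided $B'$ is locally Noetherian (which is part of the hypothesis).

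In detail, I would first verify the hypothesis transfer. Flatness is preserved by base change. Projectivity in the sense of this paper (locally on $B$, a closed embedding into $\mathbb{P}^n\times B$ followed by the projection) is preserved because the closed embedding $X\hookrightarrow\mathbb{P}^n\times B$ pulls back to a closed embedding $X'\hookrightarrow\mathbb{P}^n\times B'$. For the fiber condition, I would note that for any geometric point $\bar b'\to B'$, the geometric fiber $X'_{\bar b'}$ is canonically identified with the geometric fiber of $\pi_X$ at the composed geometric point $\bar b'\to B'\to B$; therefore the geometric fibers of $\pi'_X$ remain reduced and connected.

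Having checked the hypotheses, Lemma~\ref{lm:dirimagestructsheaf} applied to $\pi'_X:X'\to B'$ produces an isomorphism $\cO_{B'}\xrightarrow{\simeq}\pi'_{X,*}\cO_{X'}$. It remains only to identify this isomorphism with the natural morphism mentioned in the corollary, but both are the unit of the adjunction $\pi'^*_X\dashv\pi'_{X,*}$ applied to $\cO_{B'}$, so they coincide tautologically.

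There is essentially no obstacle here; the proof is entirely formal bookkeeping about base-change stability. An alternative heavier approach would invoke cohomology and base change: since the function $b\mapsto\dim_{k(\bar b)}H^0(X_{\bar b},\cO_{X_{\bar b}})$ is identically $1$ (the fibers being reduced, connected, and proper), the sheaf $\pi_{X,*}\cO_X$ is locally free and its formation commutes with arbitrary base change. However, this uses more machinery than necessary given that Lemma~\ref{lm:dirimagestructsheaf} is already available and the hypotheses are preserved.
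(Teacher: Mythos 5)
Your proposal is correct and matches the paper's intended argument: the paper marks this corollary with an immediate \qed precisely because the hypotheses of Lemma~\ref{lm:dirimagestructsheaf} (flatness, projectivity, reduced and connected geometric fibers) are stable under base change, so the lemma applies directly to $\pi'_X:X'\to B'$. Your careful verification of the hypothesis transfer is exactly the bookkeeping the paper leaves implicit.
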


\begin{remark*}
    In particular, $\pi_X$ is cohomologically flat in dimension zero (cf.~\cite[Sect.~8.1]{BLR_NeronModels}).
\end{remark*}

Consider the relative Picard stack $\cPic(X/B)$ of $X$ over $B$. Precisely, for any $B$-scheme $S$, the groupoid of $B$-morphisms $S\to\cPic(X/B)$ is the groupoid of line bundles over $X\times_BS$.  Applying the above lemma and~\cite[Thm~4.6.2.1]{LaumonMoretBailly}, we see that $\cPic(X/B)$ is an algebraic stack locally of finite type over $B$.

Consider the functor associating to a $B$-scheme $S$ the set of isomorphism classes of line bundles over $X\times_BS$. Denote by $\Pic(X/B)$ the fppf sheafification of this functor. By~\cite[Thm.~7.3]{ArtinAlgebraization} (given as Theorem~4.18.6 in~\cite{KleimanFGA}),
$\Pic(X/B)$ is represented by an algebraic space locally of finite type.

Assume that we have a section $\zeta_X:B\to X$ of $\pi_X$. For any $B$-scheme $S$, we can then identify $\Pic(X/B)(S)$ with the
set of isomorphism classes of line bundles over $X\times_BS$ together with a $\zeta_X$-rigidification; see Definition~2.8, Theorem~2.5, and Lemmas~2.9, 2.10 in~\cite{KleimanFGA}. In particular, $\Pic(X/B)$ is a fine moduli space (that is, there is a universal line bundle over $\Pic(X/B)\times_BX$) and we have $\cPic(X/B)\simeq\Pic(X/B)\times {\mathrm B}(\gm)$.

In general, we claim that $\pi_X$ has a section \'etale locally over $B$. Indeed, the subset of $X^{sm}$ where the fiber of $\pi_X$ is smooth is open in $X$ by~\cite[Thm.~12.2.4(iii)]{EGAIV-3}. Also, the restriction of $\pi_X$ to $X^{sm}$ is smooth by~\cite[Thm.~17.5.1]{EGAIV.4}. Next, $\pi_X(X^{sm})=B$ because the fibers of $\pi_X$ are geometrically reduced. Now it follows from~\cite[Cor.~17.16.3(ii)]{EGAIV.4} that $\pi_X$ has a section \'etale locally over $B$. Therefore, $\cPic(X/B)$ is a $\gm$-gerbe over $\Pic(X/B)$.

\subsection{Numerically trivial line bundles}
Note that the formation of $\Pic$ commutes with base changes: if $S\to B$ is a locally Noetherian $B$-scheme, then  \[\Pic(X\times_BS/S)=\Pic(X/B)\times_BS.\] In particular, the fiber of $\Pic(X/B)$ over a geometric point $s\to B$ is equal to $\Pic(X_s):=\Pic(X_s/\spec k(s))$, where $k(s)$ is the function field of $s$. In this case, $\Pic(X_s)$ is known to be a scheme by \cite[Cor.~6.6]{FGAV} (cf. Corollary~4.18.3 in~\cite{KleimanFGA}). Denote by $\Pic^0(X_s)$ the neutral connected component of the group scheme $\Pic(X_s)$. Set
\[
    \Pic^\tau(X_s):=\bigcup_{d>0}\hat d^{-1}(\Pic^0(X_s)),
\]
where $\hat d:\Pic(X_s)\to\Pic(X_s)$ is the multiplication by $d$. The line bundles over~$X_s$ corresponding to points of $\Pic^\tau(X_s)$ are called \emph{numerically trivial line bundles over~$X_s$}.

\begin{definition}\label{def:FibNumTriv}
    A line bundle $\ell$ over $X$ is \emph{fiberwise numerically trivial\/} (with respect to the morphism $\pi_X:X\to B$) if for all geometric points $s\to B$ the restriction of $\ell$ to $X_s$ is numerically trivial.
\end{definition}

Define a subfunctor $\Pic^\tau(X/B)$ of $\Pic(X/B)$ as follows
\begin{multline*}
    \Pic^\tau(X/B)(S)=\\ \{f\in\Pic(X/B)(S): \text{for all geometric points $s\to S$, } f(s)\in\Pic^\tau(X_s)\}.
\end{multline*}
It is easy to see that this definition coincides with one given in~\cite[Exp.~XIII, Sect.~4]{SGA6}. By~\cite[Exp.~XIII, Thm.~4.7(i)]{SGA6}, $\Pic^\tau(X/B)$ is an open subfunctor of $\Pic(X/B)$. By~\cite[Exp.~XIII, Thm.~4.7(iii)]{SGA6}, $\Pic^\tau(X/B)$ is of finite type over $B$ (cf.~also~\cite[Sect.~6]{KleimanFGA}).

Denote by
\[
    \cPic^\tau(X/B):=\cPic(X/B)\times_{\Pic(X/B)}\Pic^\tau(X/B)
\]
the stack classifying fiberwise numerically trivial line bundles; it is a $\gm$-gerbe over $\Pic^\tau(X/B)$. Thus $\cPic^\tau(X/B)$ is of finite type over $B$ as well. Clearly, the formation of $\Pic^\tau(X/B)$ and $\cPic^\tau(X/B)$ commutes with base changes.

\begin{remark}
A line bundle $\ell\to X$ corresponds to a section of $\cPic^\tau(X/B)$ if and only if the following condition is satisfied: \emph{for every geometric fiber of $\pi_X:X\to B$ and for every closed curve $C$ in this geometric fiber, the degree of the pullback of $\ell$ to $C$ is zero.} This follows from~\cite[Thm.~6.3]{KleimanFGA} or~\cite[Exp.~XIII, Thm.~4.6]{SGA6}.
\end{remark}

\subsection{Triviality locus}
Consider the zero section $B\to\Pic(X/B)$ corresponding to the trivial line bundle. Clearly, it factorizes as
\begin{equation}\label{eq:factorizezero}
B\xrightarrow{\iota'}\Pic^\tau(X/B)\to\Pic(X/B).
\end{equation}

\begin{proposition}\label{pp:Cartesian}
\stepzero\noindstep
The zero section $\iota'$ fits into a Cartesian diagram
\begin{equation*}
\begin{CD}
B\times {\mathrm B}(\gm) @>\iota>> \cPic^\tau(X/B)\\
@VVV @VVV\\
B @>\iota'>>\Pic^\tau(X/B).
\end{CD}
\end{equation*}

\noindstep\label{pp:Cart2} A line bundle $\ell$ over $X\times_BS$ is isomorphic to the pullback of a line bundle over $S$ if and only if the corresponding morphism $s_\ell:S\to\Pic(X/B)$ factors through the zero section $B\to\Pic(X/B)$.
\end{proposition}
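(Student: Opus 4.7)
The plan is to prove (ii) first and deduce (i) as a formal consequence. The fundamental input throughout will be Corollary~\ref{cor:cohflat}, which ensures $\pi'_{X,*}\cO_{X\times_BS}=\cO_S$ universally in $S$.

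For (ii), I will rephrase the condition that $s_\ell$ factors through $\iota'$ as the statement $[\ell]=0$ in the fppf sheaf $\Pic(X/B)(S)$. The forward direction is almost a tautology: if $\ell\simeq\pi_X'^*M$, then already at the level of the presheaf $\{\text{line bundles on }X\times_BS\}/\{\text{pullbacks from }S\}$ the class of $\ell$ vanishes, hence it vanishes after fppf sheafification.

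The converse is the substance of the argument. Given $[\ell]=0$ in the fppf sheaf, I would choose an fppf cover $f\colon S'\to S$ together with an isomorphism $f^*\ell\simeq\pi_X''^*M'$ for some line bundle $M'$ on $S'$ (where $\pi_X''\colon X\times_BS'\to S'$ denotes the projection), and then show that $\pi'_{X,*}\ell$ is itself a line bundle $N$ on $S$ and that the adjunction unit $\pi_X'^*N\to\ell$ is an isomorphism, so $\ell\simeq\pi_X'^*N$. Corollary~\ref{cor:cohflat} ensures that the formation of $\pi'_{X,*}\ell$ commutes with base change along $f$, and the projection formula together with $\pi''_{X,*}\cO=\cO_{S'}$ identifies $(\pi'_{X,*}\ell)|_{S'}$ with $M'$. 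Since being a line bundle is an fppf-local property, $N:=\pi'_{X,*}\ell$ is a line bundle on $S$; similarly, the adjunction unit restricts to the identity on $\pi_X''^*M'$ after pulling back to $S'$, hence is an isomorphism by fppf descent. For (i), I would then observe that an $S$-point of $B\times_{\Pic^\tau(X/B)}\cPic^\tau(X/B)$ is a fiberwise numerically trivial line bundle $\ell$ on $X\times_BS$ whose class in $\Pic^\tau(X/B)(S)$ coincides with $\iota'\circ(S\to B)$; by (ii) such an $\ell$ is a pullback $\pi_X'^*M$ (pullbacks are automatically fiberwise numerically trivial, so no numerical-triviality condition is lost). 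Since $\Hom(\pi_X'^*M_1,\pi_X'^*M_2)=\Hom(M_1,M_2)$ by adjunction and $\pi'_{X,*}\cO=\cO_S$, the functor $M\mapsto\pi_X'^*M$ identifies the fiber-product groupoid of $S$-points with the groupoid of line bundles on $S$, which is exactly $(B\times{\mathrm B}(\gm))(S)$.

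The main obstacle will be the fppf descent step inside the converse of (ii): one must check carefully that both the line-bundleness of $\pi'_{X,*}\ell$ and the fact that the adjunction unit is an isomorphism genuinely descend from $S'$ to $S$, and this hinges on the universality of Corollary~\ref{cor:cohflat}. Everything else is essentially bookkeeping around the zero section.
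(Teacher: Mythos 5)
Your proposal is correct, but it runs in the opposite direction from the paper and uses a different technique. The paper proves part (i) first: since the statement is \'etale-local over $B$ and the fibers of $\pi_X$ are reduced, one may assume $\pi_X$ has a section, in which case $\Pic(X/B)$ is a fine moduli space of rigidified line bundles and $\cPic(X/B)\simeq\Pic(X/B)\times{\mathrm B}(\gm)$, making the Cartesian square essentially formal; part (ii) is then read off from (i). You instead prove (ii) directly by fppf descent — expressing ``$s_\ell$ factors through $\iota'$'' as vanishing of $[\ell]$ in the fppf sheaf, trivializing $\ell$ over a cover $S'\to S$, and descending both the line-bundleness of $\pi'_{X,*}\ell$ and the invertibility of the adjunction unit — and then deduce (i) by a groupoid-of-points computation. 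Your route avoids the rigidification formalism and the reduction to the sectioned case, at the cost of carrying out the descent by hand; the paper's route is shorter because it outsources everything to the case with a section. Both arguments ultimately rest on the same input, namely Corollary~\ref{cor:cohflat}. One small attribution to fix: the commutation of $\pi'_{X,*}\ell$ with the base change along $f\colon S'\to S$ is just flat base change (as $f$ is faithfully flat); Corollary~\ref{cor:cohflat} is what you need afterwards, to identify $\pi''_{X,*}\pi_X''^*M'$ with $M'$ via the projection formula. This does not affect the correctness of the argument.
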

\begin{proof}
The top arrow $\iota$ is defined as follows. Note that a $B$-morphism from a~$B$-scheme $S$ to $B\times {\mathrm B}(\gm)$ is just a line bundle over $S$. The morphism $\iota$ sends such a line bundle $\ell$ to the line bundle $p_2^*\ell$ over $X\times_BS$. The fact that the diagram is Cartesian can be checked locally in \'etale topology over $B$, so we may assume that $\pi_X:X\to B$ has a section, in which case the statement is easy. The second part of the proposition follows easily from the first part.
\end{proof}

Let us prove the following claim:
\begin{proposition}\label{pr:zerosection}
The morphism $\iota':B\to\Pic^\tau(X/B)$ is a closed embedding.
\end{proposition}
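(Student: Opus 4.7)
The plan is to reduce to separatedness of the structure morphism $\pi_P\colon\Pic^\tau(X/B)\to B$. By construction, $\iota'$ is a section of $\pi_P$ (the trivial bundle on $X_b$ defines a point of $\Pic^\tau(X/B)$ over~$b$), and any section of a separated morphism of algebraic spaces is automatically a closed immersion: writing $Y:=\Pic^\tau(X/B)$, the subspace $\iota'(B)\subset Y$ is obtained as the pullback of the diagonal $\Delta_{\pi_P}\colon Y\to Y\times_B Y$ along the map $Y\to Y\times_B Y$, $y\mapsto (y,\iota'(\pi_P(y)))$. Thus the whole problem reduces to verifying that $\pi_P$ is separated.

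Separatedness can be checked \'etale locally on $B$. Since the geometric fibers of $\pi_X$ are reduced, $\pi_X$ admits a section after an \'etale base change on~$B$, and then $\Pic^\tau(X/B)$ becomes a fine moduli space, as in the discussion preceding Proposition~\ref{pp:Cartesian}; so I may assume this from the outset. I would then apply the valuative criterion: let $R$ be a DVR with fraction field $K$, denote by $\eta$ and $s$ the generic and closed points of $\spec R$, and suppose $\phi_1,\phi_2\colon\spec R\to\Pic^\tau(X/B)$ agree on $\spec K$. The $\phi_i$ correspond to fiberwise numerically trivial rigidified line bundles $L_1,L_2$ on $X_R$ together with a prescribed isomorphism $L_1|_{X_\eta}\simeq L_2|_{X_\eta}$. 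Setting $L:=L_1\otimes L_2^{-1}$, the task reduces to the following claim: \emph{a fiberwise numerically trivial line bundle $L$ on $X_R$ that is trivial on $X_\eta$ is trivial on $X_R$.}

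For this claim, pick a trivializing section $s_\eta\in H^0(X_\eta,L|_{X_\eta})$, regard it as a rational section of $L$ on $X_R$, and multiply by a suitable power of a uniformizer to obtain a regular section $s\in H^0(X_R,L)$ whose restriction $s|_{X_s}$ to the closed fiber is not identically zero. The zero scheme of $s|_{X_s}$ is then an effective Cartier divisor $D$ on $X_s$ with $\cO_{X_s}(D)\simeq L|_{X_s}$, and $L|_{X_s}$ is numerically trivial. The crucial input is the standard intersection-theoretic fact that an effective Cartier divisor on a projective, reduced, connected scheme whose associated line bundle is numerically trivial must vanish: pairing with a suitable power of an ample class on each irreducible component of $X_s$, positivity of each irreducible component of $\supp D$ against such a class forces every multiplicity to be zero. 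Hence $s|_{X_s}$ is nowhere vanishing on $X_s$, and Nakayama's lemma then shows that $s$ is nowhere vanishing on $X_R$, trivializing $L$. The main technical obstacle is precisely this intersection-theoretic step on the possibly reducible fibers; everything else is either formal or a routine extension-of-sections manipulation.
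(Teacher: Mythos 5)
Your route is genuinely different from the paper's, and in fact reverses its logical order. The paper proves the proposition directly: for a section $s:B\to\Pic^\tau(X/B)$ coming from a line bundle $\ell$, it exhibits the fiber product $B\times_{\Pic^\tau(X/B)}B$ as the support of the coherent sheaf $\cF=\HOM(R\pi_{X,*}\ell,\cO_B)$ (Proposition~\ref{pr:trivlocus}), which is a closed \emph{subscheme} for an arbitrary locally Noetherian test scheme; separatedness is then deduced as Corollary~\ref{cor:pictausep}, since the diagonal is a base change of the zero section. You go the other way: prove separatedness first (by the valuative criterion, hence only over DVRs) and recover the proposition from ``a section of a separated morphism is a closed immersion.'' There is no circularity, and the geometric heart is the same in both arguments --- a numerically trivial line bundle on a reduced connected projective scheme with a nonzero section is trivial (Lemma~\ref{lm:numtrivialtrivial}). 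What the paper's approach buys is the scheme-theoretic description of the triviality locus via cohomology and base change, which is reused later (Corollary~\ref{cor:Funiv} and the proof of Theorem~\ref{th:PartialFourier}\eqref{th:PartFourier2}); what yours buys is that the test object is a DVR, so you can clear denominators and work with an explicit section.

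Two points need shoring up. First, the step ``the zero scheme of $s|_{X_s}$ is an effective Cartier divisor'' is not automatic: the closed fiber is only reduced and connected, so $s|_{X_s}$ may vanish identically on some irreducible components while being nonzero on others, and then its zero scheme is not Cartier and the intersection-theoretic pairing argument does not directly apply. The correct repair is exactly the paper's Lemma~\ref{lm:numtrivialtrivial}: on each integral component $Z'$ where $s|_{Z'}\ne 0$, the injection $\cO_{Z'}\to\ell|_{Z'}$ is forced to be an isomorphism by comparing Hilbert polynomials of algebraically equivalent line bundles, whence the nonvanishing locus is open and closed and equals $X_s$ by connectedness. (After that, your Nakayama/properness step to spread nonvanishing from $X_s$ to $X_R$ is fine.) Second, the valuative criterion of separatedness for a morphism of \emph{algebraic spaces} is more delicate than for schemes: you should invoke quasi-separatedness of $\Pic^\tau(X/B)\to B$ (available here, since it is of finite type over a locally Noetherian base and comes from Artin's theorem) and the Noetherian refinement allowing DVRs; without such hypotheses the uniqueness part of the criterion does not characterize separatedness of algebraic spaces. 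With these two repairs your argument closes up, but note that the paper's sheaf $\cF$ would still be needed elsewhere in the text.
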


\begin{remark} Thus,~\eqref{eq:factorizezero} is a decomposition of the zero section as a closed embedding followed by an open embedding.
\end{remark}

\begin{corollary}\label{cor:pictausep}
    The projection $\Pic^\tau(X/B)\to B$ is a separated morphism of algebraic spaces.
\end{corollary}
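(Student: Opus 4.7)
The plan is to exploit the group structure on $\Pic^\tau(X/B)$ and reduce separatedness to the already-established fact that the zero section is a closed immersion.

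First I would observe that $\Pic^\tau(X/B)\to B$ has the structure of a commutative group algebraic space over $B$. Indeed, $\Pic(X/B)\to B$ is a group object (tensor product of line bundles), and the subfunctor $\Pic^\tau(X/B)$ is stable under tensor product and inverse, since the condition of being fiberwise numerically trivial is preserved by these operations (the definition is via the preimage of the neutral component of $\Pic(X_s)$ under multiplication by positive integers, and this is a subgroup of $\Pic(X_s)$). Hence $\Pic^\tau(X/B)$ is a subgroup algebraic space of $\Pic(X/B)$, with identity section equal to the morphism $\iota':B\to\Pic^\tau(X/B)$ from Proposition~\ref{pr:zerosection}.

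Next I would invoke the standard principle that for any group object $G\to S$ in algebraic spaces, $G\to S$ is separated if and only if the unit section $e:S\to G$ is a closed immersion. This follows from the Cartesian diagram
\[
\begin{CD}
G @>\Delta>> G\times_S G \\
@VVV @VV{(g_1,g_2)\mapsto g_1g_2^{-1}}V \\
S @>e>> G,
\end{CD}
\]
which shows that the diagonal $\Delta_{G/S}$ is the base change of $e$; hence $\Delta_{G/S}$ is a closed immersion precisely when $e$ is.

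Applying this to $G=\Pic^\tau(X/B)\to B$ with $e=\iota'$, the corollary follows immediately from Proposition~\ref{pr:zerosection}. There is no real obstacle: the only thing to check carefully is the group-subspace assertion in the first paragraph, which is essentially formal. (Alternatively, one can argue directly that $\Delta_{\Pic^\tau(X/B)/B}$ is the pullback of $\iota'$ along the difference morphism, without formally invoking the general principle.)
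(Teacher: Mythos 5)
Your proof is correct and is exactly the argument the paper intends: the paper's one-line proof says ``the diagonal morphism is a base change of the zero section,'' which is precisely your Cartesian diagram combined with Proposition~\ref{pr:zerosection}. You have simply filled in the (routine) detail that $\Pic^\tau(X/B)$ is a subgroup space of $\Pic(X/B)$.
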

\begin{proof}
The argument is well known, at least in case of group schemes: the diagonal morphism is a base change of the zero section.
\end{proof}

We start the proof of Proposition~\ref{pr:zerosection} with the following simple observation.
\begin{lemma}\label{lm:numtrivialtrivial}
    Let $Z$ be a connected projective reduced scheme over an algebraically closed field $k$, let $\ell$ be a numerically trivial line bundle over $Z$. If $H^0(Z,\ell)\ne0$, then $\ell$ is a trivial line bundle.
\end{lemma}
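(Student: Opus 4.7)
\smallskip
\noindent\textbf{Proof proposal.} The plan is to show that any nonzero section $s \in H^0(Z,\ell)$ is in fact nowhere vanishing, which immediately forces $s:\cO_Z\to\ell$ to be an isomorphism and hence trivializes $\ell$. Let $Z_1,\ldots,Z_n$ be the irreducible components of $Z$ equipped with their reduced (hence integral) structures, and set
\[
I := \{\, i : s|_{Z_i}\not\equiv 0\,\}.
\]
Since $s\ne 0$ the set $I$ is nonempty; the two steps below are to show first that $s$ is nowhere zero on each $Z_i$ with $i\in I$, and second that $I=\{1,\ldots,n\}$.

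For the first step, fix $i\in I$. Because $Z_i$ is integral, the zero scheme of $s|_{Z_i}$ is either empty or an effective Cartier divisor $D_i$ with $\cO_{Z_i}(D_i)\simeq\ell|_{Z_i}$. Any integral curve in $Z_i$ is an integral curve in $Z$, so $\ell|_{Z_i}$ remains numerically trivial on $Z_i$. Picking any ample divisor $H$ on $Z_i$ and comparing the intersection numbers
\[
D_i\cdot H^{\dim Z_i-1} \;=\; \ell|_{Z_i}\cdot H^{\dim Z_i-1} \;=\; 0,
\]
where the right-hand side vanishes because $H^{\dim Z_i-1}$ is a positive combination of integral curves on each of which $\ell$ has degree zero, and the left-hand side would be strictly positive if $D_i\ne 0$ by ampleness of $H$, one concludes $D_i=\emptyset$. (If $\dim Z_i=0$ the claim is trivial.) Thus $s|_{Z_i}$ is a nowhere vanishing section.

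For the second step, I would use the connectedness hypothesis. If $I$ were a proper subset of $\{1,\ldots,n\}$, some $Z_i$ with $i\in I$ would meet some $Z_j$ with $j\notin I$ at a point $z$. In the one-dimensional $k(z)$-vector space $\ell\otimes k(z)$ the image of $s$ would be nonzero (because $s|_{Z_i}$ is nowhere vanishing at $z\in Z_i$) and simultaneously zero (because $s$ vanishes identically on $Z_j\ni z$), a contradiction. Hence $I=\{1,\ldots,n\}$, the section $s$ is nowhere vanishing on all of $Z$, and $s:\cO_Z\to\ell$ is an isomorphism.

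The only delicate point is the intersection-number argument forcing $D_i=\emptyset$; once that is handled, the propagation from a single component to all of $Z$ via connectedness is formal and the conclusion is immediate.
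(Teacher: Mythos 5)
Your proof is correct, and its overall skeleton coincides with the paper's: restrict $s$ to the irreducible components, show it is nowhere vanishing on each component where it is not identically zero, and then propagate over all of $Z$ by connectedness (the paper phrases this as ``$\{z:s_z\ne0\}$ is a union of components, hence closed, and it is also open''). Where you genuinely diverge is in the component-wise step. The paper observes that $s|_{Z'}:\cO_{Z'}\to\ell|_{Z'}$ is injective and that $(\ell|_{Z'})^{\otimes d}$ is algebraically equivalent to $\cO_{Z'}$ for some $d>0$ (this is essentially the definition of $\Pic^\tau$), so the two bundles have equal Hilbert polynomials; an injection of sheaves with equal Hilbert polynomials on an integral projective scheme is an isomorphism, so $(s|_{Z'})^{\otimes d}$, and hence $s|_{Z'}$, is an isomorphism. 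You instead take the zero divisor $D_i$ of $s|_{Z_i}$ and kill it by computing $D_i\cdot H^{\dim Z_i-1}$ against an ample $H$, using the characterization of numerical triviality as degree zero on all integral curves. Both arguments are sound; yours leans on the intersection-theoretic characterization of $\Pic^\tau$ (the equivalence recorded in the paper's remark citing~\cite[Exp.~XIII, Thm.~4.6]{SGA6}) and on positivity of $D\cdot H^{n-1}$ for a nonzero effective divisor, whereas the paper's Hilbert-polynomial argument works directly from the algebraic-equivalence definition and avoids intersection numbers altogether, which makes it marginally more self-contained in this setting. The connectedness step is handled equivalently in both proofs.
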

\begin{proof}
    Let $s$ be a non-zero section of $\ell$, let $Z'$ be an irreducible component of $Z$. If $s|_{Z'}\ne0$, then $s|_{Z'}$ is an injection $\cO_{Z'}\to\ell|_{Z'}$ because $Z'$ is reduced. Since for some $l>0$ the line bundles $\cO_{Z'}$ and $(\ell|_{Z'})^{\otimes d}$ are algebraically equivalent, they have the same Hilbert polynomial, and we see that $(s|_{Z'})^{\otimes d}$ is an isomorphism. It follows that $s|_{Z'}$ is an isomorphism.

    Thus $\{z\in Z:s_z\ne0\}$ is a union of components of $Z$. We see that this set is closed. Since it is also open, we see that it coincides with $Z$, so that $s$ is a nowhere vanishing section of $\ell$.
\end{proof}

Essentially, Lemma~\ref{lm:numtrivialtrivial} implies Proposition~\ref{pr:zerosection} by semicontinuity. Let us analyze this
situation carefully (we will need this analysis in the future).

Let $\ell$ be a fiberwise numerically trivial line bundle over $\pi_X:X\to B$. Consider the derived pushforward $R\pi_{X,*}\ell$, and
put
\[
    \cF:=\HOM(R\pi_{X,*}\ell,\cO_B)=H^0(R\HOM(R\pi_{X,*}\ell,\cO_B)).
\]
Clearly, $\cF$ is a coherent sheaf on $B$. Note that its formation commutes with (not necessarily flat) base change because $R\HOM(R\pi_{X,*}\ell,\cO_B)$ is a perfect complex supported in non-positive degrees (see for example the second theorem and Lemma~1 in~\cite[Sect.~5]{MumfordAbelian}).

\begin{proposition}[{cf.~\cite[Exercise~4.2]{KleimanFGA}}]\label{pr:trivlocus}
Consider the scheme theoretic support $\supp(\cF)\subset B$; it is a closed subscheme of $B$.\\
\stepzero
\noindstep\label{pr:trivlocus2} $\cF$ is isomorphic to the pushforward of a line bundle over $\supp(\cF)$;\\
\noindstep\label{pr:trivlocus1} For a morphism $\phi:Z\to B$, the pullback of $\ell$ to $Z\times_BX$ is isomorphic to the pullback of a line bundle over $Z$ if and only if $\phi$ factors through the closed embedding $\supp(\cF)\hookrightarrow B$.
\end{proposition}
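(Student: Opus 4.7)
The plan is to reduce everything to a fiberwise analysis using Lemma~\ref{lm:numtrivialtrivial} together with the stated base-change compatibility of $\cF$. First I would compute the fibers of $\cF$: for any geometric point $b\in B$, base change identifies $\cF\otimes k(b)$ with $H^0(X_b,\ell|_{X_b})^\vee$, which by Lemma~\ref{lm:numtrivialtrivial} is one-dimensional if $\ell|_{X_b}$ is trivial and zero otherwise. Thus $\cF$ has fiber dimension at most one everywhere, and $\supp(\cF)$ is set-theoretically the fiberwise triviality locus of $\ell$.

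For~\eqref{pr:trivlocus2}, Nakayama's lemma supplies a local generator of $\cF$ near every point of $\supp(\cF)$, so locally $\cF\simeq\cO_B/\operatorname{Ann}(\cF)=\cO_{\supp(\cF)}$, realizing $\cF$ as the pushforward of an invertible sheaf on its scheme-theoretic support. For~\eqref{pr:trivlocus1} I would work throughout with the identity $\phi^*\cF=\cF_Z$, where $\cF_Z$ denotes the analogue of $\cF$ attached to the pullback $\ell_Z$ of $\ell$ to $X_Z:=X\times_B Z$. In the ``only if'' direction, writing $\ell_Z\simeq p_2^*\cM$ and using the projection formula together with Corollary~\ref{cor:cohflat} identifies $\cF_Z$ with $\cM^\vee$; the annihilator of this line bundle is zero, so the inclusion $\phi^*\operatorname{Ann}(\cF)\subseteq\operatorname{Ann}(\cF_Z)=0$ promotes the pointwise factorization to a scheme-theoretic factorization of $\phi$ through $V(\operatorname{Ann}(\cF))=\supp(\cF)$. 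In the ``if'' direction, if $\phi$ factors through $\supp(\cF)$, then $\cF_Z$ is a line bundle on $Z$ by~\eqref{pr:trivlocus2}; fiberwise this reads $\dim H^0(X_z,\ell_Z|_{X_z})=1$, so Lemma~\ref{lm:numtrivialtrivial} makes $\ell_Z$ trivial on every geometric fiber of $p_2$. Grauert's theorem then gives that $\cM:=p_{2,*}\ell_Z$ is a line bundle on $Z$ whose formation commutes with base change, and the adjunction map $p_2^*\cM\to\ell_Z$ restricts on each fiber to a nonzero, hence nowhere-vanishing, section (again by Lemma~\ref{lm:numtrivialtrivial}), so it is an isomorphism.

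The main technical point I anticipate is the scheme-theoretic factorization in the ``only if'' direction of~\eqref{pr:trivlocus1}: extracting the factorization of $\phi$ through the closed subscheme $V(\operatorname{Ann}(\cF))$, rather than merely through its underlying set, requires transporting annihilator information across the base change, and this is precisely what the base-change-compatible formulation of $\cF$ was introduced to allow. Everything else amounts to combining Lemma~\ref{lm:numtrivialtrivial} with standard cohomology-and-base-change inputs.
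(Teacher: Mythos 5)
Your overall strategy --- reduce everything to fibers via Lemma~\ref{lm:numtrivialtrivial} and the base-change compatibility of $\cF$ --- is essentially the paper's, and part~(i) together with the ``only if'' direction of part~(ii) (the annihilator argument promoting the pointwise factorization to a scheme-theoretic one) are correct. The gap is in the ``if'' direction of part~(ii): Grauert's theorem requires the base to be \emph{reduced}, and here $Z$ is an arbitrary scheme. This is not a removable hypothesis. Take $Z=\spec k[\epsilon]/(\epsilon^2)$, $X_Z=E\times Z$ for an elliptic curve $E$, and $\ell_Z$ a line bundle restricting to $\cO_E$ on the closed fiber whose Kodaira--Spencer class in $H^1(E,\cO_E)$ is nonzero. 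Then $h^0$ of the (unique) geometric fiber equals $1$ and the fiberwise restriction of $\ell_Z$ is trivial, yet $p_{2,*}\ell_Z\simeq k$ is not locally free, base change for $p_{2,*}$ fails, and $\ell_Z$ is not a pullback from $Z$. So the purely fiberwise information you feed into Grauert is genuinely insufficient, and the non-reduced case is exactly where the proposition has content: the see-saw principle (Corollary~\ref{cor:see-saw}) is stated separately for reduced bases, and the proposition is used to prove that the zero section is a closed \emph{subscheme}.

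The repair is to use the full strength of the hypothesis that $\cF_Z=\phi^*\cF$ is a line bundle, not merely its fiberwise consequence $h^0=1$. The paper's route is the base-change-compatible adjunction
\[
\Hom(p_2^*\cM,\ell_Z)\simeq\Hom_{D(Z)}(R\HOM(Rp_{2,*}\ell_Z,\cO_Z),\cM^\vee)\simeq\Hom(\cF_Z,\cM^\vee),
\]
valid because $R\HOM(Rp_{2,*}\ell_Z,\cO_Z)$ is a perfect complex in non-positive degrees: taking $\cM=\cF_Z^\vee$ and the identity on the right produces a map $p_2^*\cF_Z^\vee\to\ell_Z$ that is nonzero on every geometric fiber, hence nowhere vanishing by Lemma~\ref{lm:numtrivialtrivial}, hence an isomorphism. (Equivalently, representing $Rp_{2,*}\ell_Z$ by a perfect complex, one checks that $p_{2,*}\ell_Z\simeq\cF_Z^\vee$ with base change precisely when $\cF_Z$ is locally free; in the example above $\cF_Z\simeq k$ is not a line bundle, consistent with the statement.) A similar, smaller gloss occurs in your ``only if'' direction: identifying $\cF_Z$ with $\cM^\vee$ requires knowing $H^0(R\HOM(Rp_{2,*}\cO_{X_Z},\cO_Z))\simeq\cO_Z$, which follows from Corollary~\ref{cor:cohflat} plus a short Nakayama argument, not from the projection formula alone.
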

\begin{proof}
We need the following general statement.
\begin{lemma}
    A coherent sheaf $\cF$ on a locally Noetherian scheme $B$ is isomorphic to a push-forward of a line bundle over $\supp(\cF)$ if and only if the fiber of $\cF$ at any point $b\in B$ is at most one-dimensional.
\end{lemma}
\begin{proof}
    Let $\cF$ be such that the fiber of $\cF$ at any point $b\in B$ is at most one-dimensional. Since every coherent sheaf is isomorphic to the pushforward of a sheaf on its support, we may assume that $\supp(\cF)=B$, so the fiber of $\cF$ at any point $b\in B$ is one-dimensional. Let $s$ be a section of $\cF$ in a neighbourhood of $b\in B$ such that $s$ does not vanish at $b$. By Nakayama's Lemma, shrinking the neighbourhood of $b$, we may assume that $s$ generates $\cF$. Thus $\cF$ is isomorphic in a neighbourhood of $b$ to the quotient of $\cO_B$ by a sheaf of ideals. Since we assumed that $\supp(\cF)=B$, this sheaf of ideals should be zero, and we see that $\cF$ is a line bundle. The converse statement is trivial.
\end{proof}

Let us return to the proof of the proposition. By the previous lemma, part~\eqref{pr:trivlocus2} is equivalent to the claim that the fiber of $\cF$ at any point $b\in B$ is at most one-dimensional.
Since the construction of $\cF$ commutes with the base change, it amounts to checking that
\[
    \dim H^0(X_b,\ell_b)\le 1;
\]
here $X_b:=X\times_Bb$ is the fiber of $\pi_X$ and $\ell_b$ is the pullback of $\ell$ to $X_b$. This claim follows from
Lemma~\ref{lm:numtrivialtrivial} and Corollary~\ref{cor:cohflat}.

Let us now prove part~\eqref{pr:trivlocus1}. By base change, we may assume without losing generality that $Z=B$ and $\phi=\Id_B$.
Thus, we need to show that $\ell$ is isomorphic to the pullback of a line bundle over $B$ if and only if $\supp(\cF)=B$.

Clearly, $\ell$ is isomorphic to the pullback of a line bundle over $B$ if and only if there exists a line bundle $\ell_B$ over $B$ and a morphism
\[
    f:\pi_X^*\ell_B\to\ell
\]
that is non-zero on fibers over any point of $X$. By Lemma~\ref{lm:numtrivialtrivial}, this is equivalent to the condition that the
morphism is not identically zero on the fiber of $\pi_X$ over each point $b\in B$. We have
\begin{multline*}
    \Hom(\pi_X^*\ell_B,\ell)\simeq\Hom_{D(B)}(\ell_B,R\pi_{X,*}\ell)\simeq\\
    \Hom_{D(B)}(R\HOM(R\pi_{X,*}\ell,\cO_B),\ell_B^\vee)\simeq
    \Hom(\cF,\ell_B^\vee),
\end{multline*}
where the last identification follows from the fact that $R\HOM(R\pi_{X,*}\ell,\cO_B)$ is supported in non-positive degrees. Note also, that the above identifications are compatible with pullbacks, so $f$ is not identically zero on the fiber over any point $b\in B$ if and only if the corresponding morphism $h:\cF\to\ell_B^\vee$ is non-zero at every $b\in B$. Since we know that $\cF$ is a line bundle over $\supp\cF$, the existence of such $h$ and $\ell_B$ is equivalent to $\supp\cF=B$. This implies the statement.
\end{proof}

The following useful corollary is known as the see-saw principle.
\begin{corollary}\label{cor:see-saw}
Let $B$ be reduced. A line bundle over $X$ is isomorphic to the pullback of a line bundle over $B$ if and only if its restrictions to the geometric fibers of $\pi_X$ are trivial.
\end{corollary}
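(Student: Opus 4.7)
The forward direction is immediate: if $\ell = \pi_X^* \ell_B$ for some line bundle $\ell_B$ on $B$, then for any geometric point $s \to B$ the restriction $\ell|_{X_s}$ is the pullback of $\ell_B|_s$, which is a line bundle on a single point and hence trivial. So the plan focuses on the converse.

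The plan is to deduce the converse directly from Proposition~\ref{pr:trivlocus}\eqref{pr:trivlocus1}. First I observe that the hypothesis (triviality on every geometric fiber) is strictly stronger than fiberwise numerical triviality, so Proposition~\ref{pr:trivlocus} applies to $\ell$. Let $\cF = \HOM(R\pi_{X,*}\ell, \cO_B)$ and let $\supp(\cF) \hookrightarrow B$ be the closed subscheme provided by that proposition. The goal is to show $\supp(\cF) = B$ as closed subschemes, after which applying Proposition~\ref{pr:trivlocus}\eqref{pr:trivlocus1} to $\phi = \Id_B$ will complete the proof.

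To identify $\supp(\cF)$ with $B$, I proceed in two steps. First, I check that $\supp(\cF)$ agrees with $B$ set-theoretically. For any geometric point $s \to B$, the pullback $\ell|_{X_s}$ is trivial by hypothesis, in particular it is isomorphic to the pullback of a (trivial) line bundle on $s$; by Proposition~\ref{pr:trivlocus}\eqref{pr:trivlocus1} applied to the map $s \to B$, this forces $s$ to factor through the closed embedding $\supp(\cF) \hookrightarrow B$. Since this holds for every geometric point, the underlying topological space of $\supp(\cF)$ contains every point of $B$, so it equals $B$ topologically.

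The main (though mild) obstacle is the scheme-theoretic upgrade: a priori $\supp(\cF)$ could carry nilpotent thickenings inside $B$. This is exactly where the hypothesis that $B$ is reduced enters. A closed subscheme of a reduced scheme whose underlying set is everything must coincide with the ambient scheme, so $\supp(\cF) = B$. Applying Proposition~\ref{pr:trivlocus}\eqref{pr:trivlocus1} now with $\phi = \Id_B$ yields a line bundle $\ell_B$ on $B$ with $\pi_X^* \ell_B \simeq \ell$, finishing the proof.
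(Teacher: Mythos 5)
Your proposal is correct and follows essentially the same route as the paper: both deduce the converse from Proposition~\ref{pr:trivlocus}\eqref{pr:trivlocus1}, first showing $\supp(\cF)=B$ set-theoretically (your pointwise application to geometric points is exactly the implicit content of the paper's one-line assertion), then using reducedness of $B$ to upgrade to a scheme-theoretic equality before applying the proposition again with $\phi=\Id_B$. The only addition is your explicit check that fiberwise triviality implies fiberwise numerical triviality so that the proposition applies, which the paper leaves tacit.
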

\begin{proof}
Let $\ell$ be a line bundle over $X$ whose restrictions to the geometric fibers of $\pi_X$ are trivial. Let $\cF$ be as in the proposition. It follows from part~\eqref{pr:trivlocus1} of the proposition that the support of $\cF$ coincides with $B$ set-theoretically. Since $B$ is reduced, this support coincides with $B$ scheme-theoretically. Using part~\eqref{pr:trivlocus1} of the proposition again, we see that $\ell$ is isomorphic to a pullback from $B$.

The converse statement is obvious.
\end{proof}

\begin{proof}[Proof of Proposition~\ref{pr:zerosection}]
By definition, it suffices to check that for any scheme $S$ and any morphism $s:S\to\Pic^\tau(X/B)$, the fiber product
\[
    S\times_{\Pic^\tau(X/B)}B\to S
\]
is a closed embedding. Since $\Pic^\tau(X/B)$ is of finite type over the locally Noetherian scheme $B$, we can restrict
ourselves to locally Noetherian schemes $S$.
Recall that $\Pic^\tau(X\times_BS/S)=\Pic^\tau(X/B)\times_BS$. Thus replacing $B$ with $S$ (and $X$ with $X\times_BS$), we may assume that $S=B$ and the morphism $s:B\to\Pic^\tau(X/B)$ is a section. Since our statement is local in \'etale topology on $B$, we may assume that $\pi_X:X\to B$ has a section, so $\Pic^\tau(X/B)$ is a fine moduli space. Then the section~$s$ comes from a fiberwise numerically trivial line bundle $\ell$ over $\pi_X:X\to B$. Let $\cF$ be as in Proposition~\ref{pr:trivlocus}. This proposition together with Proposition~\ref{pp:Cartesian}\eqref{pp:Cart2} gives a Cartesian diagram
\[
\begin{CD}
\supp\cF @>>> B\\
@VVV @V s VV\\
B @>\iota'>>\Pic^\tau(X/B)
\end{CD}
\]
and the claim follows.
\end{proof}

Recall that $\cL$ is the universal line bundle over $\cPic^\tau(X/B)\times_BX$; put
\[
    \cF_{univ}:=\HOM(Rp_{1,*}\cL,\cO_{\cPic^\tau(X/B)}).
\]

Proposition~\ref{pr:trivlocus} is local over the base, so we can apply it to $\cL$ and the projection $\cPic^\tau(X/B)\times_BX\to\cPic^\tau(X/B)$ obtaining the following
\begin{corollary}\label{cor:Funiv}
The sheaf $\cF_{univ}$ is a direct image of a line bundle over $B\times {\mathrm B}(\gm)$ under $\iota$.
\end{corollary}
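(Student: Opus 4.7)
The plan is to apply Proposition~\ref{pr:trivlocus} directly to the universal line bundle $\cL$, viewed as a fiberwise numerically trivial line bundle for the projection
\[
    p_1\colon \cPic^\tau(X/B)\times_B X\to\cPic^\tau(X/B).
\]
As the authors indicate just before the corollary, the construction of the sheaf $\cF$ (built from $R\HOM(R\pi_{X,*}\ell,\cO_B)$, a perfect complex in non-positive degrees whose formation commutes with base change) and the two conclusions of Proposition~\ref{pr:trivlocus} are local on the base. Hence the whole proposition extends without change when the base is the algebraic stack $\cPic^\tau(X/B)$; this extension is the first thing I would spell out carefully, working on a smooth scheme cover. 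Applying the extended statement then yields a closed substack $Z:=\supp(\cF_{univ})\subset\cPic^\tau(X/B)$, a line bundle on $Z$ whose direct image along $Z\hookrightarrow\cPic^\tau(X/B)$ is $\cF_{univ}$, and the universal property that a morphism $\phi\colon T\to\cPic^\tau(X/B)$ factors through $Z$ if and only if $\phi^*\cL$ is isomorphic to the pullback of a line bundle from $T$.

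Next, I would identify the closed substack $Z$ with the image of $\iota\colon B\times\mathrm{B}(\gm)\to\cPic^\tau(X/B)$. By the defining universal property of $\cPic^\tau(X/B)$, a morphism $\phi\colon T\to\cPic^\tau(X/B)$ is the same as a fiberwise numerically trivial line bundle on $T\times_B X$, namely $\phi^*\cL$. Proposition~\ref{pp:Cartesian}\eqref{pp:Cart2}, together with the Cartesian square defining $\iota$, says precisely that $\phi^*\cL$ is the pullback of a line bundle from $T$ if and only if $\phi$ factors through $\iota$. Since $\iota$ is a closed embedding (as a base change of the closed embedding $\iota'$ from Proposition~\ref{pr:zerosection} along the $\gm$-gerbe $\cPic^\tau(X/B)\to\Pic^\tau(X/B)$), both $Z$ and $\iota(B\times\mathrm{B}(\gm))$ are closed substacks of $\cPic^\tau(X/B)$ with identical functors of points, hence they coincide by Yoneda. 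Combining this identification with the preceding step gives $\cF_{univ}\simeq\iota_*\ell_0$ for some line bundle $\ell_0$ on $B\times\mathrm{B}(\gm)$, which is the desired conclusion.

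The only real obstacle is a bookkeeping one, namely confirming that Proposition~\ref{pr:trivlocus} indeed transfers from a scheme base to the stack $\cPic^\tau(X/B)$. Once this is granted, everything else is a direct matching of universal properties already established in Section~\ref{sect:relativePic}, with no further geometric input required.
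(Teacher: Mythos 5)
Your proposal is correct and follows essentially the same route as the paper: the paper's entire argument is the observation that Proposition~\ref{pr:trivlocus} is local over the base and hence applies to $\cL$ and the projection $\cPic^\tau(X/B)\times_BX\to\cPic^\tau(X/B)$, with the identification of $\supp(\cF_{univ})$ with the image of $\iota$ (via Proposition~\ref{pp:Cartesian}\eqref{pp:Cart2} and the universal property) left implicit. You have simply spelled out those implicit steps, which is fine.
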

\begin{remark}\label{rm:Funiv}
It is easy to see that Proposition~\ref{pr:trivlocus} is equivalent to the above corollary. In fact, the line bundle in the corollary is easy to describe explicitly: it is the trivial line bundle $\cO_B$ over $B$ on which $\gm$ acts with weight $-1$.
(We identify quasicoherent sheaves on $B\times {\mathrm B}(\gm)$ with quasicoherent sheaves
on $B$ equipped with a linear action of $\gm$.)
\end{remark}

\section{Quasi-constructible sets and quasi-subgroups}\label{sect:quasi-constr}
\begin{conventions*}
All schemes are assumed to be locally Noetherian, and all morphisms of schemes are locally of finite type. When we work over a fixed field
$k$, all schemes are assumed to be locally of finite type over $k$; for two such schemes $X$ and $Y$,
we write simply $X\times Y$ for $X\times_k Y$.

Let $X$ be a scheme. Given a schematic point $x$ of $X$, we denote by $k(x)$ its residue field, and use $x$ as a shorthand for
$\spec k(x)$.
\end{conventions*}

Let $p:Y\to X$ be a morphism of finite type, and let $\ell$ be a line bundle over $Y$. We view
$Y$ as a family of schemes over $X$, and $\ell$ as a family of line bundles over these schemes.
(In our applications, $p$ is faithfully flat but we do not need the assumption at this point.)
Consider the \emph{triviality locus} of this family:
\begin{equation}\label{eq:triviality locus}
K':=\{x\in X:\ell|_{x\times_XY}\simeq\cO_{x\times_XY}\}\subset X.
\end{equation}
In particular, $K'\supset X-p(Y)$.

If $p$ is a flat projective morphism with reduced and connected geometric fibers, then $K'\subset X$ is a locally closed subset (combine openness of $\cPic^\tau(Y/X)$ in $\cPic(Y/X)$ with Proposition~\ref{pr:trivlocus}). In general, $K'$ is not locally closed, or even constructible. However, one can show that $K'$ is always a (possibly infinite) union of locally closed subsets of $X$; see Proposition~\ref{pp:linebtriv} below. We call such subsets \emph{quasi-constructible}.

We prefer to work with the \emph{finite order locus}
\begin{equation}\label{eq:finite order locus}
K:=\{x\in X: \exists\; d>0 \text{ such that }\ell^{\otimes d}|_{x\times_XY}\simeq\cO_{x\times_XY}\}
\end{equation}
instead of $K'$. The advantage of $K$ is that it behaves better under base changes (Corollary~\ref{co:finite order base change}).

We then apply this framework in the following situation. Let $A$ be a group scheme over a field $k$, and let $\Theta$ be a line bundle over $A$. We can then consider the shifts of $\Theta$ (by the action of $A$ on itself by translations); the shifts form an $A$-family of line bundles over $A$ (thus $X=A$ and $Y=A\times A$). The triviality locus $K$ of this family is preserved by the group
operation on $A$; we use the term `quasi-subgroup' in such situation. We give a precise definition of quasi-subgroups below and provide a classification (assuming $k$ is algebraically closed) in Proposition~\ref{pp:quasigroups}. The main result of this section is Corollary~\ref{cor:ampleshiftsaffine}.

\subsection{Quasi-constructible sets}
We denote by $X^{top}$ the underlying topological space of a scheme $X$.

\begin{definition}
A subset $K\subset X^{top}$ is \emph{quasi-constructible} if it is a possibly infinite union of locally closed subsets.
\end{definition}

Obviously, a subset $K\subset X^{top}$ is quasi-constructible if and only if for every $x\in K$ there is a non-empty open subscheme $Z\subset\overline{\{x\}}$ such that $Z^{top}\subset K$. Note the following properties of quasi-constructible sets.

\begin{proposition}\label{pp:qc}
\begin{enumerate}
\item\label{it:pp:qc1} The class of quasi-constructible sets is invariant under finite intersections and arbitrary unions.

\item\label{it:pp:qc2} Let $f:Y\to X$ be a morphism of schemes. If $K\subset X^{top}$ is quasi-\linebreak constructible, then so is $f^{-1}(K)\subset Y^{top}$.

\item\label{it:pp:qc3} Let $f:Y\to X$ be a morphism of schemes. If $K\subset Y^{top}$ is quasi-constructible, then so is $f(K)\subset X^{top}$.

\item\label{it:pp:qc:product} Suppose we are given a Cartesian diagram
\[\xymatrix{Y_1\times_X Y_2\ar[r]\ar[d]&Y_2\ar[d]\\ Y_1\ar[r]& X}\]
of schemes and quasi-constructible subsets $K_i\subset Y_i^{top}$, $i=1,2$. Let
\[
    (K_1\times_X K_2)^{qc}\subset(Y_1\times_X Y_2)^{top}
\]
be the preimage of $K_1\times K_2$ under the natural continuous map
\[
    (Y_1\times_X Y_2)^{top}
    \to Y_1^{top}\times Y_2^{top}.
\]
Then $(K_1\times_X K_2)^{qc}$ is quasi-constructible.
\end{enumerate}
\end{proposition}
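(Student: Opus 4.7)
My plan is to verify the four items in order, with~\eqref{it:pp:qc:product} reducing to~\eqref{it:pp:qc1} and~\eqref{it:pp:qc2}.

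For~\eqref{it:pp:qc1}, closure under arbitrary unions is immediate from the definition. For finite intersections I would reduce to the binary case: given $K_1,K_2$ quasi-constructible and $x\in K_1\cap K_2$, pick for each $i$ a non-empty open subset $Z_i\subset\overline{\{x\}}$ with $Z_i^{top}\subset K_i$; because $\overline{\{x\}}$ is irreducible, $Z_1\cap Z_2$ is non-empty open in $\overline{\{x\}}$ and is contained in $K_1\cap K_2$. Item~\eqref{it:pp:qc2} is equally routine: the continuous preimage of a locally closed set is locally closed, and preimage commutes with unions.

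The main content is~\eqref{it:pp:qc3}. Writing $K=\bigcup_\alpha L_\alpha$ with each $L_\alpha$ locally closed in $Y$, the identity $f(K)=\bigcup_\alpha f(L_\alpha)$ together with~\eqref{it:pp:qc1} reduces the problem to showing $f(L)$ is quasi-constructible for a single locally closed set $L=U\cap C$ with $U\subset Y$ open and $C\subset Y$ closed. Given $y\in f(L)$, pick $x\in L$ with $f(x)=y$; then $\overline{\{x\}}\subset C$ and $U\cap\overline{\{x\}}$ is a non-empty open subset of $\overline{\{x\}}$ contained in $L$. I would then choose a non-empty affine open $U'\subset U\cap\overline{\{x\}}$ mapping into some affine open $V'\subset\overline{\{y\}}$. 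Since $f$ is locally of finite type and both $U'$ and $V'$ are Noetherian, the induced dominant morphism of integral schemes $U'\to V'$ is of finite type, so by Chevalley's theorem $f(U')$ is constructible in $V'$. As $f(U')$ contains the generic point of $V'$, it contains a non-empty open subset of $V'$, which is also a non-empty open subset of $\overline{\{y\}}$ contained in $f(U')\subset f(L)$; this is exactly the witness required by the definition of quasi-constructibility at $y$.

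For~\eqref{it:pp:qc:product}, I would observe that under the continuous map $(Y_1\times_X Y_2)^{top}\to Y_1^{top}\times Y_2^{top}$ appearing in the statement, the preimage of $K_1\times K_2$ is precisely $p_1^{-1}(K_1)\cap p_2^{-1}(K_2)$, where $p_i\colon Y_1\times_X Y_2\to Y_i$ are the scheme projections. Quasi-constructibility then follows by applying~\eqref{it:pp:qc2} to each projection and then~\eqref{it:pp:qc1} to the intersection. The only genuine technicality sits in~\eqref{it:pp:qc3}, where one must descend from the locally of finite type, locally Noetherian setting to the finite type Noetherian setting by choosing appropriate affine opens before Chevalley's theorem can be applied.
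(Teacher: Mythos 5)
Your proof is correct and follows essentially the same route as the paper, which also disposes of (i), (ii), and (iv) as straightforward and derives (iii) from Chevalley's theorem on images of constructible sets under finite-type morphisms. The only difference is that you spell out the reduction to a dominant finite-type morphism of integral Noetherian affine schemes, which the paper leaves implicit.
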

\begin{proof}
Parts~\eqref{it:pp:qc1},~\eqref{it:pp:qc2}, and~\eqref{it:pp:qc:product} are straightforward. Part~\eqref{it:pp:qc3} follows
from Chevalley's Theorem: the image of a constructible set under a morphism of finite type is constructible, see~\cite[Thm.~IV.1.8.4]{EGAIV-1}. (Recall that $f$ is assumed to be locally of finite type.)
\end{proof}

\begin{definition} Let $X$ be a scheme, and let $K\subset X^{top}$ be a quasi-constructible subset. A subset $S\subset K$ is a \emph{component} if it is maximal among subsets of $K$ that are irreducible and locally closed in $X^{top}$.
\end{definition}

\begin{proposition} Every quasi-constructible set $K$ is a (possibly infinite) union of its components.
\end{proposition}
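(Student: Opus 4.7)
The plan is to show that every $x\in K$ lies in some component; since each component is by definition a subset of $K$, this immediately yields $K=\bigcup_S S$ where $S$ ranges over the components.

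First I would fix $x\in K$ and consider the poset
\[
    \mathcal{T}_x:=\{S\subset X^{top}:x\in S\subset K,\ S\ \text{irreducible and locally closed in}\ X^{top}\},
\]
ordered by inclusion. To see $\mathcal{T}_x$ is non-empty, I would use quasi-constructibility to produce a locally closed $L\subset K$ with $x\in L$; writing $L=U\cap C$ with $U$ open and $C$ closed, the inclusion $\overline{\{x\}}\subset C$ forces $L\cap\overline{\{x\}}=U\cap\overline{\{x\}}$, which is a non-empty open subset of the irreducible closed set $\overline{\{x\}}$, hence an element of $\mathcal{T}_x$.

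Next I would apply Zorn's lemma. Given a chain $\{S_\alpha\}\subset\mathcal{T}_x$ with generic points $\eta_\alpha$, each $\eta_\alpha$ is a generalization of $x$ and therefore corresponds to a prime ideal of the local ring $\cO_{X,x}$; the inclusion $S_\alpha\subset S_\beta$ translates to a reverse inclusion of primes. Because $X$ is locally Noetherian, $\cO_{X,x}$ is a Noetherian local ring and thus has finite Krull dimension, so the collection $\{\eta_\alpha\}$ is finite and contains a most general element $\eta$. Choosing $\alpha_0$ with $\eta_{\alpha_0}=\eta$ and using monotonicity, $\eta_\alpha=\eta$ for every $\alpha\geq\alpha_0$, and each such $S_\alpha$ has the form $V_\alpha\cap\overline{\{\eta\}}$ for some open $V_\alpha\subset X$. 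Therefore
\[
    S:=\bigcup_\alpha S_\alpha=\Bigl(\bigcup_{\alpha\geq\alpha_0}V_\alpha\Bigr)\cap\overline{\{\eta\}}
\]
is locally closed in $X$, irreducible (as an open subset of $\overline{\{\eta\}}$), contained in $K$, and contains $x$, providing the needed upper bound in $\mathcal{T}_x$.

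Any maximal element of $\mathcal{T}_x$ produced by Zorn's lemma is automatically a component of $K$: any irreducible locally closed $S'\subset K$ enlarging it would contain $x$ and thus lie in $\mathcal{T}_x$, so maximality in $\mathcal{T}_x$ upgrades to maximality among all irreducible locally closed subsets of $K$. The main point requiring care is verifying that the union along a chain remains locally closed; this fails in general for arbitrary chains of locally closed subsets, and is rescued here only because the Noetherianity of $\cO_{X,x}$ forces the generic points of the chain to stabilize after finitely many distinct values.
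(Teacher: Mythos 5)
Your proof is correct and follows the same route as the paper: the paper's entire proof is the observation that the union of a chain of irreducible locally closed subsets is again irreducible and locally closed, which is exactly the Zorn's-lemma argument you carry out. You have simply supplied the details the paper leaves to the reader (non-emptiness of the poset and the stabilization of generic points via finiteness of the Krull dimension of $\cO_{X,x}$), and these details are right.
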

\begin{proof} It is enough to prove that the union of a chain of irreducible and locally closed subsets of $X$ is also irreducible and locally closed.

So, let $S_\alpha$ be such a chain. Let $\overline S_\alpha$ be the Zariski closure of $S_\alpha$, and let $x\in\cup_\alpha\overline S_\alpha$ be a point. The decreasing sequence of prime ideals corresponding to $\overline S_\alpha$ in the local ring $\cO_{X,x}$ stabilizes because this local ring is finite-dimensional (being Noetherian). It follows from irreducibility of schemes $\overline S_\alpha$ that the chain $\overline S_\alpha$ stabilizes as well. The rest of the proof is easy and is left to the reader.
\end{proof}

The following lemma will be used in the proof of Proposition~\ref{pp:quasigroups}.

\begin{lemma}\label{lm:q-constr}
  Let $K\subset X^{top}$ be quasi-constructible. Assume that the dimensions of components of $K$ are bounded. Let $H\subset K$ be a component of maximal dimension, and let $H'\subset X^{top}$ be constructible set with irreducible closure such that $H\subset H'\subset K$. Then $\overline H=\overline{H'}$.
\end{lemma}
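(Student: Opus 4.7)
The plan is to reduce the lemma to a dimension argument using the fact that $H$ has maximal dimension among components of $K$. The key is to extract, from the constructible set $H'$, an irreducible locally closed subset of $X$ of dimension $\dim\overline{H'}$ that lies in $K$; then the maximality of $\dim H$ will force $\dim\overline{H}=\dim\overline{H'}$, and since $\overline{H}\subset\overline{H'}$ with $\overline{H'}$ irreducible, equality of closures follows automatically.

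\textbf{Step 1.} First I would show that $H'$ contains a nonempty open subset $V$ of $\overline{H'}$. Write $H'=\bigcup_{i=1}^n(U_i\cap F_i)$ as a finite union of locally closed sets. Since $H'$ is dense in the irreducible closed set $\overline{H'}$, irreducibility forces $\overline{U_i\cap F_i}=\overline{H'}$ for some $i$; then $F_i\supset\overline{H'}$, so $V:=U_i\cap\overline{H'}$ is a nonempty open subset of $\overline{H'}$ contained in $H'$.

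\textbf{Step 2.} Next I would observe that $V$ is irreducible (being a nonempty open subset of the irreducible $\overline{H'}$) and locally closed in $X$ (being an open subset of the closed set $\overline{H'}$). Moreover $V\subset H'\subset K$. Invoking the Zorn-type argument already used in the paper to prove that every quasi-constructible set is a union of its components (a union of a chain of irreducible locally closed subsets is again irreducible and locally closed), I would produce a component $H''$ of $K$ with $V\subset H''$. In particular
\[
\dim H''\ge \dim V=\dim\overline{H'},
\]
where the last equality is because $V$ is a nonempty open in the irreducible $\overline{H'}$.

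\textbf{Step 3.} Finally I would combine: by the maximality hypothesis on $H$,
\[
\dim\overline{H}=\dim H\ge \dim H''\ge \dim\overline{H'}.
\]
But $H\subset H'$ gives $\overline{H}\subset\overline{H'}$, and a closed irreducible subset of $\overline{H'}$ of dimension $\dim\overline{H'}$ must coincide with it. Hence $\overline{H}=\overline{H'}$, as desired.

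The only nontrivial point is Step~1, i.e.\ verifying that a constructible set dense in an irreducible closed set must contain a nonempty open subset of that closed set; this is where irreducibility is essential and is the step most likely to be glossed over. Once that is in hand, the rest is a formal dimension comparison against the maximality of $H$.
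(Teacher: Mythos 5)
Your argument is correct and is essentially the paper's own proof written out in full: your $V$ is exactly a neighbourhood of the generic point $\xi'$ of $\overline{H'}$ (the paper works directly with this $\xi'$, which exists precisely because $H'$ is constructible with irreducible closure), and your dimension comparison in Steps 2--3 reproduces the paper's inequality $\dim\overline{\xi}\ge\dim\overline{\xi'}$ followed by the conclusion $\xi=\xi'$. The only implicit convention, shared with the paper's one-line version, is that the dimension of a locally closed component is that of its closure, so no further comment is needed.
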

\begin{proof}
    Since $H'$ is constructible with irreducible closure, there is a unique point $\xi'\in H'$ such that $\bar\xi'=\overline{H'}$. If $\xi$ is the generic point of $H$, then $\dim\bar\xi\ge\dim\overline{\xi'}$, thus $\xi=\xi'$.
\end{proof}

\subsection{Triviality locus}\label{sc:triviality}
Let $X$ be a (locally Noetherian) scheme, and let $p:Y\to X$ be a morphism of finite type. Let $\ell$ be a line bundle over
$Y$.

\begin{proposition}\label{pp:linebtriv}
The triviality locus $K'\subset X^{top}$ of $\ell$, given by \eqref{eq:triviality locus}, is quasi-constructible.
\end{proposition}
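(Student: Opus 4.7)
The plan is to use the characterization stated immediately after the definition of quasi-constructible sets: a subset $K\subset X^{top}$ is quasi-constructible if and only if for every $x\in K$ there is a non-empty open subscheme $Z\subset\overline{\{x\}}$ with $Z^{top}\subset K$. Fix $x\in K'$. Replacing $X$ by $\overline{\{x\}}$ with its reduced structure (and $Y$ by the resulting base change) does not affect membership in the triviality locus for points of $\overline{\{x\}}$, since each such point keeps the same residue field and the same fiber under $p$. So I may assume $X$ is integral Noetherian with generic point $x$ and $\ell|_{Y_x}\simeq\cO_{Y_x}$, and aim to find a non-empty open $U\subset X$ contained in $K'$.

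Next I would spread out the trivialization. An isomorphism $\cO_{Y_x}\xrightarrow{\sim}\ell|_{Y_x}$ amounts to sections $s\in H^0(Y_x,\ell|_{Y_x})$, $t\in H^0(Y_x,\ell^{-1}|_{Y_x})$ with $st=1$. After shrinking $X$ to an affine open neighborhood of $x$, the morphism $p:Y\to X$ is of finite presentation with $Y$ quasi-compact and quasi-separated, while $\spec k(x)$ is the cofiltered limit of the affine opens of $X$ containing $x$. By the standard passage-to-the-limit results for quasi-coherent sheaves of finite presentation (EGA~IV, \S8), after further shrinking $X$ I can lift $s$ and $t$ to global sections $\tilde s\in H^0(Y,\ell)$, $\tilde t\in H^0(Y,\ell^{-1})$ whose restrictions to $Y_x$ are $s$ and $t$. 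In particular, $(\tilde s\tilde t-1)|_{Y_x}=0$.

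Finally, I apply Chevalley's theorem to the open set $V:=\{y\in Y:(\tilde s\tilde t)(y)\ne1\}\subset Y$, which is disjoint from $Y_x$. Its image $p(V)\subset X$ is constructible and misses the generic point $x$; since $X$ is irreducible Noetherian, $p(V)$ is contained in a proper closed subset, so there is a non-empty open $U\subset X$ with $x\in U$ and $p^{-1}(U)\cap V=\emptyset$. Then $\tilde s\tilde t=1$ identically on $Y_U$, so $\tilde s$ trivializes $\ell|_{Y_U}$, and hence $\ell|_{Y_{x'}}$ is trivial for every $x'\in U$. This yields $U^{top}\subset K'$, as required. I expect the main technical point to be the spreading-out step: one must extend the sections $s$ and $t$ (not merely the isomorphism class) to a common open neighborhood and then use the Chevalley argument above to convert the fiberwise identity $st=1$ into an identity on a full open of the base.
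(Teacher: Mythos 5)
Your argument is correct and takes essentially the same route as the paper: the paper's proof reduces immediately to Lemma~\ref{lm:linebgentriv} (pass to $\overline{\{x\}}$ with its reduced structure and spread out the trivialization from the generic fiber), which it proves by covering $Y$ by affines, spreading out the trivialization on each piece, and shrinking the base until the pieces glue. Your version merely repackages that spreading-out step via the EGA~IV, \S 8 limit formalism applied to the sections $s$ and $t=s^{-1}$, followed by a Chevalley-image argument to force $st=1$ over an open subset of the base; this is a minor variation of the same idea and is sound.
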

\begin{proof} Follows from Lemma~\ref{lm:linebgentriv}.
\end{proof}

\begin{lemma}\label{lm:linebgentriv}
In the situation of Proposition~\ref{pp:linebtriv}, assume that $X$ is integral. Let $\xi=\spec k(X)$ be the
generic point of $X$; here $k(X)$ is the field of rational functions on $X$. Suppose the
pullback of $\ell$ to $\xi\times_X Y$ is trivial. Then there is an open subscheme $X'\subset X$ such that
\[\ell|_{p^{-1}(X')}\simeq\cO_{p^{-1}(X')}.\]
\end{lemma}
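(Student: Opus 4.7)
The strategy is to spread out the given trivialization from the generic fiber to some open in $Y$ and then apply Chevalley's constructibility theorem to locate the locus over which the spread-out map remains an isomorphism. The question is local on $X$, so I may replace $X$ by an affine open neighborhood of $\xi$; since $X$ was locally Noetherian this neighborhood is Noetherian (and still has generic point $\xi$), and $Y$ is then Noetherian because $p$ is of finite type.

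By hypothesis there is an isomorphism $\phi_\xi\colon\cO_{Y_\xi}\xrightarrow{\sim}\ell|_{Y_\xi}$, equivalently a nowhere-vanishing global section $s_\xi$ of $\ell|_{Y_\xi}$. Since $Y_\xi$ is the inverse limit of $p^{-1}(U)$ as $U$ runs over non-empty open subsets of $X$ (corresponding to $k(X)=\varinjlim\cO_X(U)$), the standard limit theorem for sections of coherent sheaves on Noetherian schemes produces a non-empty open $U\subset X$ and a section $s\in\Gamma(p^{-1}(U),\ell)$ restricting to $s_\xi$ on $Y_\xi$. Multiplication by $s$ defines a morphism $\phi\colon\cO_{p^{-1}(U)}\to\ell|_{p^{-1}(U)}$ of coherent sheaves whose restriction to $Y_\xi$ is $\phi_\xi$, which is an isomorphism.

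Let $\cC$ denote the cokernel of $\phi$, and let $Z:=\supp(\cC)\subset p^{-1}(U)$; since $\cC$ is coherent on a Noetherian scheme, $Z$ is closed, and by construction $Z$ is disjoint from $Y_\xi$. Chevalley's theorem, applied to the finite-type morphism $p|_{p^{-1}(U)}\colon p^{-1}(U)\to U$, implies that $p(Z)\subset U$ is constructible and does not contain $\xi$; hence it is contained in a proper closed subset of the integral scheme $U$, and $X':=U\setminus\overline{p(Z)}$ is a non-empty open subset of $X$. On $p^{-1}(X')$ the morphism $\phi$ is surjective, and since a surjection $\cO\to\ell$ between line bundles is given by a nowhere-vanishing section, it is automatically an isomorphism, producing $\ell|_{p^{-1}(X')}\simeq\cO_{p^{-1}(X')}$ as required. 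The only delicate point is the spreading-out of the section $s_\xi$ to a section on some $p^{-1}(U)$, but this is routine once the reduction to Noetherian $X$ (and hence Noetherian $Y$) has been made.
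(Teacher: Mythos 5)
Your proof is correct. It shares with the paper's argument the basic ``spreading out'' philosophy, but the mechanics differ in a way worth noting. The paper never invokes the cokernel or Chevalley's theorem: it covers $Y$ by finitely many affines $Y_i$, spreads the trivialization $s$ out to an honest \emph{isomorphism} $s_i$ over some $U_i\times_X Y_i$ (in effect spreading out $s$ and $s^{-1}$ simultaneously), shrinks further so the $s_i$ agree on overlaps, and glues; the invertibility locus is thus built in chart by chart. You instead spread out only the \emph{section} (via the limit theorem for sections of finitely presented sheaves along $Y_\xi=\lim_f\, p^{-1}(D(f))$, where one should take the cofinal system of principal opens so the transition maps are affine --- a standard point your parenthetical already gestures at), and then locate the invertibility locus a posteriori: the cokernel $\cC$ has closed support $Z$ missing the generic fiber, and Chevalley's constructibility theorem is exactly what upgrades ``$\xi\notin p(Z)$'' to ``$p(Z)$ is not dense'' (without constructibility this implication would fail, so your use of Chevalley is genuinely load-bearing, not decorative). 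Your route buys a cleaner global statement at the cost of citing the limit formalism as a black box; the paper's route is more elementary and self-contained, at the cost of the bookkeeping of overlaps $U_{ij}$. Both are complete proofs.
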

\begin{proof}
Without loss of generality, we may assume that the scheme $X$ is affine. Let $s\in H^0(\xi\times_X Y,\ell)$ be a trivialization, and let
$Y=\cup_i Y_i$ be a finite cover of $Y$ by affine subschemes. For each
$i$, there exists a non-empty open subset $U_i\subset X$ and a~trivialization
\[
    s_i:\ell|_{U_i\times_X Y_i}\xrightarrow{\simeq}\cO_{U_i\times_X Y_i}
\]
such that $s$ and $s_i$ agree on $\xi\times_X Y_i$. For every pair $i,j$, there exists a non-empty open subset
\[U_{ij}\subset U_i\cap U_j\]
such that \[s_i|_{U_{ij}\times_X (Y_i\cap Y_j)}=s_j|_{U_{ij}\times_X (Y_i\cap Y_j)}.\]
Put $X'=\bigcap U_{ij}$. The trivializations $s_i$ glue to a trivialization of $\ell|_{X'\times_X Y}$.
\end{proof}

\begin{corollary}\label{co:finite order} The finite order locus $K\subset X^{top}$ of $\ell$, given by~\eqref{eq:finite order locus},
is quasi-constructible.
\end{corollary}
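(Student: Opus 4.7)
The plan is to reduce Corollary~\ref{co:finite order} to Proposition~\ref{pp:linebtriv} by writing the finite order locus as a countable union of triviality loci, one for each power of $\ell$.

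More precisely, for each integer $d>0$, let $K'_d\subset X^{top}$ denote the triviality locus of the line bundle $\ell^{\otimes d}$ over $Y$, as defined by~\eqref{eq:triviality locus} applied to $\ell^{\otimes d}$ in place of $\ell$. By Proposition~\ref{pp:linebtriv} applied to $\ell^{\otimes d}$, each $K'_d$ is quasi-constructible.

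By the very definition~\eqref{eq:finite order locus} of the finite order locus, we have
\[
K\;=\;\bigcup_{d>0} K'_d,
\]
since a point $x\in X$ belongs to $K$ if and only if there exists some $d>0$ with $\ell^{\otimes d}|_{x\times_X Y}\simeq \cO_{x\times_X Y}$, which is exactly the condition $x\in K'_d$. By Proposition~\ref{pp:qc}\eqref{it:pp:qc1}, the class of quasi-constructible sets is closed under arbitrary unions, so $K$ is quasi-constructible. There is no real obstacle here: once Proposition~\ref{pp:linebtriv} is granted, the only observation required is that the tensor powers $\ell^{\otimes d}$ are again line bundles on $Y$ and that quasi-constructibility is preserved under the (in this case countable) union indexed by $d$.
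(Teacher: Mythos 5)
Your proposal is correct and matches the paper's own argument, which likewise deduces the corollary from Proposition~\ref{pp:linebtriv} (applied to each power $\ell^{\otimes d}$) together with the stability of quasi-constructible sets under arbitrary unions from Proposition~\ref{pp:qc}\eqref{it:pp:qc1}. You have simply written out explicitly the decomposition $K=\bigcup_{d>0}K'_d$ that the paper leaves implicit.
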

\begin{proof}
Follows from Proposition~\ref{pp:linebtriv} and Proposition~\ref{pp:qc}\eqref{it:pp:qc1}.
\end{proof}

Note the following lemma.

\begin{lemma} \label{lm:finite order extension} Suppose $k$ is a field, $Y$ is a $k$-scheme of finite type, and $\ell$ is a
line bundle over $Y$. Let $k'\supset k$ be a field extension. Denote the pullback of $\ell$ to
\[Y_{k'}:=\spec k'\times_{\spec k} Y\] by $\ell_{k'}$. Then $\ell$ has finite order (that is, $\ell^{\otimes d}\simeq\cO_Y$ for some $d>0$) if and only if $\ell_{k'}$ has finite order.
\end{lemma}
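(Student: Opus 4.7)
The ``only if'' direction is immediate since pullback of an isomorphism is an isomorphism, so the content is in the converse: assuming $\ell_{k'}^{\otimes d}\simeq\cO_{Y_{k'}}$ for some $d>0$, I must produce some positive power of $\ell$ on $Y$ that is trivial.

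The first step is to reduce to the case where $k'/k$ is finitely generated. Writing $k'=\varinjlim k_\alpha$ as the filtered colimit of its finitely generated subextensions $k_\alpha/k$, and using that $Y$ is of finite type over $k$ (hence quasi-compact and quasi-separated), cohomology of quasi-coherent sheaves commutes with this colimit; so a trivializing section of $\ell_{k'}^{\otimes d}$ descends to a section $s_\alpha\in H^0(Y_{k_\alpha},\ell_{k_\alpha}^{\otimes d})$ for some $\alpha$. Its vanishing locus $Z_\alpha\subset Y_{k_\alpha}$ is a closed subscheme whose base change to $Y_{k'}$ is empty; since a nonempty $k_\alpha$-scheme remains nonempty under extension of scalars to any field, $Z_\alpha=\emptyset$, so $s_\alpha$ is already a trivialization.

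Next, assuming $k'/k$ finitely generated, I would pick a finitely generated $k$-subalgebra $R\subset k'$ with fraction field $k'$ and set $V=\spec R$, so $V$ is an integral $k$-variety with $k(V)=k'$. The given trivialization over the generic fiber $Y\times_k\spec k(V)$ of the family $Y\times_k V\to V$ spreads out, by the same colimit argument applied to the localizations of $R$, to a trivialization of $\ell^{\otimes d}$ on $Y\times_k U$ for some nonempty open $U\subset V$. Choose any closed point $v\in U$; by the Nullstellensatz its residue field $\kappa$ is a finite extension of $k$, and restricting to the fiber over $v$ produces $\ell_\kappa^{\otimes d}\simeq\cO_{Y_\kappa}$.

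Finally, the remaining finite extension $\kappa/k$ is handled by the norm map. The projection $\pi\colon Y_\kappa\to Y$ is finite and faithfully flat of constant degree $[\kappa:k]$, so there is a norm homomorphism $N\colon\Pic(Y_\kappa)\to\Pic(Y)$ characterized by $N(\pi^*M)\simeq M^{\otimes[\kappa:k]}$. Applying $N$ to $\pi^*\ell^{\otimes d}\simeq\cO_{Y_\kappa}$ yields $\ell^{\otimes d[\kappa:k]}\simeq\cO_Y$, completing the proof. The main (though still routine) obstacle is the descent/spreading-out argument in the first two reductions; once one is over a finite extension, the norm map immediately delivers the conclusion.
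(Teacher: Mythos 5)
Your proof is correct and follows essentially the same route as the paper's: reduce to a finitely generated extension by a limit argument, spread the trivialization out over an integral $k$-variety with function field $k'$, specialize at a closed point to reach a finite extension, and then kill the remaining finite extension by a degree-$[\kappa:k]$ multiplication on the Picard group. Your use of the norm map in the last step is the same mechanism as the paper's computation $\ell^{\otimes d}\simeq\det(p_{2,*}p_2^*\ell)\simeq\det(p_{2,*}\cO_{Y_{k'}})\simeq\cO_Y$, just packaged differently.
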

\begin{proof} The `only if' direction is obvious. Let us prove the `if' direction. Without loss of generality, we may assume that $\ell_{k'}$ itself is trivial. Also note that
the isomorphism $\ell_{k'}\simeq\cO_{Y_{k'}}$ is defined over a finitely generated extension of $k$, so we may assume that $k'$ is finitely generated over $k$.

Let us choose an integral scheme $X$ of finite type over $k$ with the property that $k(X)\simeq k'$. The line bundle $\cO_X\boxtimes\ell$
over $X\times Y=X\times_k Y$ satisfies the hypotheses of Lemma~\ref{lm:linebgentriv} with respect to the projection
$p_1:X\times Y\to X$. By this
lemma, we may assume, after replacing $X$ by an open subscheme, that $\cO_X\boxtimes\ell\simeq\cO_{X\times Y}$.
Let $x\in X$ be a closed point with residue field $k(x)$; since the pullback of $\ell$ to
$x\times Y$ is trivial, we may replace $k'$ with $k(x)$ and assume that $k'\supset k$ is a finite extension.

Finally, let $p_2:Y_{k'}\to Y$ be the projection. Put $d=[k':k]$, and note that
\[\ell^{\otimes d}\simeq\det(p_{2,*}p_2^*\ell)\simeq\det(p_{2,*}\cO_{Y_{k'}})\simeq\det(k'\otimes_k\cO_Y)\simeq\cO_Y.\]
\end{proof}

\begin{corollary}\label{co:finite order base change}
Let $K$ be as in Corollary~\ref{co:finite order}. Let $X_1$ be a scheme and $f:X_1\to X$ be a morphism. Put $Y_1:=X_1\times_X Y$,
and let the line bundle $\ell_1$ over $Y_1$ be the pullback of $\ell$. Then
\[\{x_1\in X_1: \exists\; d>0 \text{ such that }\ell_1^{\otimes d}|_{x_1\times_{X_1}Y_1}\simeq\cO_{x_1\times_{X_1}Y_1}\}=f^{-1}(K).\]
That is, the finite order locus $K$ behaves well under base changes.
\qed
\end{corollary}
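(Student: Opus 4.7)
The plan is to reduce the statement to Lemma~\ref{lm:finite order extension}, which handles exactly the case of a single field extension.

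First I would fix a schematic point $x_1 \in X_1$ and set $x := f(x_1) \in X$. The map $\spec k(x_1) \to X_1 \to X$ factors through $\spec k(x)$, giving a field extension $k(x) \hookrightarrow k(x_1)$. Therefore the fiber satisfies
\[
x_1 \times_{X_1} Y_1 \;=\; x_1 \times_{X_1} (X_1 \times_X Y) \;=\; x_1 \times_X Y \;=\; \spec k(x_1)\times_{\spec k(x)} (x \times_X Y).
\]
In other words, writing $Y_x := x \times_X Y$ and $(Y_x)_{k(x_1)}$ for its base change to $\spec k(x_1)$, the fiber of $Y_1 \to X_1$ over $x_1$ is canonically $(Y_x)_{k(x_1)}$.

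Next, I would track the line bundle through these identifications. Since $\ell_1$ is defined as the pullback of $\ell$ along $Y_1 \to Y$, the restriction $\ell_1|_{x_1 \times_{X_1}Y_1}$ is canonically isomorphic to the pullback of $\ell|_{Y_x}$ along the projection $(Y_x)_{k(x_1)} \to Y_x$. Now apply Lemma~\ref{lm:finite order extension} with $k = k(x)$, $k' = k(x_1)$, and line bundle $\ell|_{Y_x}$: we conclude that there exists $d > 0$ with $(\ell|_{Y_x})^{\otimes d} \simeq \cO_{Y_x}$ if and only if there exists $d > 0$ with $(\ell_1|_{x_1\times_{X_1}Y_1})^{\otimes d} \simeq \cO_{x_1\times_{X_1}Y_1}$. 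The former is the condition $x \in K$, the latter is the condition that $x_1$ lies in the finite order locus of $\ell_1$, so the two sets coincide.

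There is no real obstacle here — the content is already in Lemma~\ref{lm:finite order extension}. The only thing to be slightly careful about is the canonical identification of fibers and pullbacks of line bundles through the two base changes, but this is formal.
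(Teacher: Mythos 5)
Your argument is correct and is exactly the intended one: the paper marks this corollary with \qed precisely because it reduces, as you say, to identifying the fiber $x_1\times_{X_1}Y_1$ with the base change of $x\times_X Y$ along $k(x)\hookrightarrow k(x_1)$ and invoking Lemma~\ref{lm:finite order extension}. Nothing is missing.
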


\subsection{Quasi-subgroups}\label{sect:quasi-subgr}
Suppose now that $A$ is a group scheme of finite type over a field $k$. Let $\mu_A:A\times A\to A$ be the group law,
let $1\in A(k)$ be the identity, and let $\iota_A:A\to A$ be the inversion. Denote by $\nu_A:A\times A\to A$ the
``division'' morphism, that is, $\nu_A:=\mu_A\circ(\Id_A\times\iota_A)$.

\begin{definition} A quasi-constructible set $K\subset A^{top}$ is a \emph{quasi-subgroup} if $1\in K$, $K$ is invariant under $\iota_A$, and the image of $(K\times K)^{qc}\subset(A\times A)^{top}$ under $\mu_A$ is contained in $K$. (Here $(K\times K)^{qc}=(K\times_{\spec k}K)^{qc}$ is as in Proposition~\ref{pp:qc}\eqref{it:pp:qc:product}.) Equivalently, the last condition means that for all $z\in A\times A$, if $p_1(z)\in K$ and $p_2(z)\in K$, then $\mu_A(z)\in K$.
\end{definition}

\begin{proposition}\label{pp:quasigroups}
Suppose $k$ is algebraically closed. Let $K\subset A^{top}$ be a quasi-subgroup.  Then there exists a closed connected
reduced subgroup scheme $H\subset A$ such that $K$ is a (possibly infinite) union of cosets of $H$. More precisely, $K\subset N(H)^{top}$, where $N(H)$ is
the normalizer of $H$, and $K$ is the preimage of a (possibly infinite) subgroup in $(N(H)/H)(k)\subset(N(H)/H)^{top}$.
\end{proposition}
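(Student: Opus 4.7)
The plan is to isolate a ``neutral component'' of $K$ as a closed connected reduced subgroup $H\subset A$, show every component of $K$ is a full coset of $H$, and verify that the resulting set of cosets forms a subgroup of $(N(H)/H)(k)$. Two inputs are used throughout: Lemma~\ref{lm:q-constr} and the fact that over the algebraically closed field $k$ the product of two irreducible varieties is irreducible. To construct $H$, let $n$ denote the maximal dimension of a component of $K$; a component of dimension $n$ passing through $1$ exists, because translating any $n$-dimensional component by one of its own $k$-points yields an $n$-dimensional irreducible locally closed subset of $K$ through $1$, which must lie in some component through $1$ of dimension at least $n$, hence exactly $n$. Pick such a component $H_0$ and set $H:=\overline{H_0}$ with the reduced structure. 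I would then show that for any other component $H_\alpha$ of $K$ through $1$ of dimension $n$, $\overline{H_\alpha}=H$: the scheme $H_0\times_k H_\alpha$ is irreducible, every schematic point of it maps into $(K\times K)^{qc}\subset A\times A$, and so by the quasi-subgroup hypothesis $\mu_A(H_0\times_k H_\alpha)\subset K$. This image is constructible and irreducible, contains $H_0$ (multiply by $1\in H_\alpha$) and $H_\alpha$ (multiply by $1\in H_0$), so Lemma~\ref{lm:q-constr} applied to the maximal-dimensional components $H_0$ and $H_\alpha$ yields $\overline{H_0}=\overline{\mu_A(H_0\times_k H_\alpha)}=\overline{H_\alpha}$. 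Taking $H_\alpha=H_0$ gives $\overline{\mu_A(H_0\times_k H_0)}=H$, and combined with continuity of $\mu_A$ and $\overline{H_0\times_k H_0}=H\times_k H$ this promotes to $\mu_A(H\times_k H)\subset H$ set-theoretically; the analogous argument for $\iota_A$ gives $\iota_A(H)\subset H$. Since $H$ is reduced these inclusions factor through scheme-theoretically, so $H$ is a closed connected reduced subgroup of $A$.

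Next I would establish $H\subset K$ and the coset structure. By Chevalley's theorem, the subgroup of $A$ generated by $H_0$ is closed, is realized as the image of an iterated multiplication morphism from a finite product of copies of $H_0$ and $\iota_A(H_0)$, and (being a closed subgroup containing $H_0$) coincides with $H$. An induction on the number of factors, applying the quasi-subgroup property at each step to schematic points of the iterated product scheme, places the entire image inside $K$, so $H\subset K$. Now for any component $H_\alpha$ of $K$ and any $k$-point $a\in H_\alpha(k)$, the quasi-subgroup property gives $a^{-1}H_\alpha\subset K$, an irreducible locally closed subset of $K$ containing $1$; it lies in some component $H_\gamma$ of $K$ through $1$ (not necessarily of dimension $n$). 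Running the first paragraph's argument on $\mu_A(H_0\times_k H_\gamma)$ (invoking Lemma~\ref{lm:q-constr} only for the maximal-dimensional component $H_0$) shows $H_\gamma\subset H$, so $H_\alpha\subset aH\subset K$; maximality of $H_\alpha$ as an irreducible locally closed subset of $K$ forces $H_\alpha=aH$. Thus every component of $K$ is a full coset of $H$. Finally, for $a\in K(k)$, iterating the quasi-subgroup property gives $aHa^{-1}\subset K$, an irreducible closed subset of dimension $n$ through $1$; it therefore lies in some maximal-dimensional component of $K$ through $1$ whose closure is $H$, and by dimension and irreducibility $aHa^{-1}=H$, so $a\in N(H)$. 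Hence $K\subset N(H)^{top}$, and the image of $K$ in $(N(H)/H)(k)$ is closed under the inherited product and inversion, making it a subgroup.

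The principal technical obstacle is the careful bookkeeping between schematic and closed points, together with the fact that a component $H_\alpha$ of $K$ is not a priori symmetric or closed under composition. Lemma~\ref{lm:q-constr} is engineered precisely to bridge this gap: it converts each statement of the form ``the generic point of an irreducible constructible subset of $K$ lies in a given component'' into a statement about the entire closure.
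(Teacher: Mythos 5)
Your proof is correct, and at its core it is the same argument as the paper's: isolate a maximal-dimensional component of $K$ through the identity, use Lemma~\ref{lm:q-constr} to show that products of components stay inside its closure, and then show every component of $K$ is a coset of the resulting subgroup. The one place where you genuinely diverge is in establishing that the closure $H=\overline{H_0}$ is still contained in $K$. You take the closure from the outset and then recover $H\subset K$ from the Chevalley--Borel theorem that the subgroup generated by an irreducible constructible subset through the identity is closed and is realized as a finite iterated product, feeding each stage of that product back through the quasi-subgroup property; this works, but imports an extra standard theorem. The paper instead proves directly that the maximal component is already closed, by a more elementary maximality trick: for $h\in H_0(k)$, both $H_0$ and $H_0h^{-1}$ are dense open subsets of $\overline{H_0}$, so their union is locally closed in $A$ and contained in $K$, and maximality of the component forces $H_0h^{-1}\subset H_0$; hence $H_0$ is a dense open subgroup of $\overline{H_0}$ and therefore equals it, making $H\subset K$ automatic. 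The remaining differences are cosmetic: the paper packages multiplication and inversion together via the division morphism $\nu_A$, and deduces $x\in N(H)$ from $K_1=Hx=xH$ rather than from $xHx^{-1}=H$ as you do. Both routes are sound; yours costs one additional standard input, the paper's is self-contained.
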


\begin{proof}
We may assume that $A$ is reduced. Let $H\subset K$ be a component of $K$ of maximal dimension. Without loss of generality,
we may assume that $1\in H$; otherwise, we replace $H$ by $Hh^{-1}$ for any $h\in H(k)$. In what follows, we do
not distinguish between $H$ and the corresponding reduced locally closed subscheme of~$A$.

Note that $H\times H\subset A\times A$ is irreducible because $k$ is algebraically closed.
Let $H\cdot H^{-1}$ be the image of $H\times H$ under the map $\nu_A$. Then $H\cdot H^{-1}\subset K$ is constructible, and its closure is irreducible. Since $H\cdot H^{-1}\supset H$, Lemma~\ref{lm:q-constr} implies that $H\cdot H^{-1}\subset\overline{H}$. Therefore, $\overline{H}\subset A$ is a~subgroup.

For any $h\in H(k)$, both $H$ and $Hh^{-1}$ are open subsets of $\overline{H}$. Therefore, their union $H\cup Hh^{-1}$ is open in $\overline{H}$ and hence locally closed in $A$. Since this union is contained in $K$, maximality of $H$ implies that $Hh^{-1}\subset H$. Therefore, $H\subset\overline{H}$ is a~subgroup. Since it is a dense open subgroup of $\overline{H}$, we see that
\[
    \overline H=H.
\]

Finally, let $K_1\subset K$ be another component. Pick $x\in K_1(k)$, and note that the subset $H\cdot K_1\cdot x^{-1}\subset K$ is a constructible subset with irreducible closure. Since the subset contains $H$, by Lemma~\ref{lm:q-constr} it coincides with $H$. Hence $K_1\subset H\cdot K_1=Hx$. Since $K_1\subset K$
is a component, $K_1=Hx$. Similarly, $K_1=xH$, so that $x\in N(H)$. The claim follows.
\end{proof}

Clearly, the subgroup scheme $H$ in the statement of Proposition~\ref{pp:quasigroups} is uniquely determined by $K$. We call it
\emph{the neutral component} of $K$.

\subsection{Shifts of line bundles}\label{sect:shiftsoflb}
As before, $A$ is a group scheme of finite type over a~field $k$. Fix a line bundle $\Theta$ over $A$.

Given a schematic point $x\in A$, put
\[A_{k(x)}:=x\times A=\spec k(x)\times A.\] Define the shift $\mu_x:A_{k(x)}\to A$ as the composition of the canonical morphism $A_{k(x)}\to A\times A$ and the group law. Let $p_x:A_{k(x)}\to A$ be the projection.
Put
\begin{equation}\label{eq:shifts}
    K_\Theta:=\{x\in A^{top}:\exists\; d>0\text{ such that }\mu_x^*\Theta^{\otimes d}\simeq p_x^*\Theta^{\otimes d}\}.
\end{equation}
By Corollary~\ref{co:finite order} applied to the first projection $p_1:A\times A\to A$ and the line bundle $\ell:=\mu_A^*\Theta\otimes p_2^*\Theta^{-1}$ over $A\times A$, we know that $K_\Theta$ is quasi-constructible.

\begin{proposition}\label{pr:quasi-subgroup}
$K_\Theta$ is a quasi-subgroup.
\end{proposition}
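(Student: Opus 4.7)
The plan is to verify the three defining conditions of a quasi-subgroup directly from~\eqref{eq:shifts}: containment of $1$, stability under $\iota_A$, and closure under the multiplicativity condition. Throughout, the computational device is the line bundle $\ell := \mu_A^*\Theta \otimes p_2^*\Theta^{-1}$ on $A \times A$, so that $x \in K_\Theta$ iff $\ell|_{x \times A}$ has finite order on $A_{k(x)}$; by Lemma~\ref{lm:finite order extension} this condition is insensitive to passing to a field extension, which will allow us to pull the whole argument up to a common base. Containment $1 \in K_\Theta$ is immediate from $\mu_1 = p_1 = \Id_A$.

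For stability under $\iota_A$, suppose $\mu_x^*\Theta^{\otimes d} \simeq p_x^*\Theta^{\otimes d}$ on $A_{k(x)}$. Consider the automorphism $\sigma\colon A_{k(x)} \to A_{k(x)}$ of the second factor given by multiplication by the inverse of the tautological $k(x)$-point of $A$. A direct computation shows $\mu_x \circ \sigma = p_x$ and $p_x \circ \sigma = \mu_{x^{-1}}$, so applying $\sigma^*$ to the given isomorphism yields $p_x^*\Theta^{\otimes d} \simeq \mu_{x^{-1}}^*\Theta^{\otimes d}$. Since $p_x = p_{x^{-1}}$ as second projections and $k(x) = k(x^{-1})$, this is exactly the finite-order condition at $x^{-1}$.

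For the multiplicativity condition, let $z \in A \times A$ with $p_1(z), p_2(z) \in K_\Theta$. Put $T := \spec k(z)$ and let $x, y \in A(T)$ be the $T$-points induced by $p_1(z), p_2(z)$; then $w := xy \in A(T)$ factors through the schematic point $\mu_A(z)$. By Lemma~\ref{lm:finite order extension}, the hypotheses pull back to isomorphisms $\mu_x^*\Theta^{\otimes d_1} \simeq p_x^*\Theta^{\otimes d_1}$ and $\mu_y^*\Theta^{\otimes d_2} \simeq p_y^*\Theta^{\otimes d_2}$ on $T \times A$ for some $d_1, d_2 > 0$. Introduce the automorphism $\sigma_y\colon T \times A \to T \times A$ sending $(t, a)$ to $(t, y(t)a)$; one verifies $\mu_x \circ \sigma_y = \mu_w$ and $p_x \circ \sigma_y = \mu_y$. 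Pulling back the $x$-isomorphism by $\sigma_y$ and raising to the $d_2$-th power gives $\mu_w^*\Theta^{\otimes d_1 d_2} \simeq \mu_y^*\Theta^{\otimes d_1 d_2}$, and raising the $y$-isomorphism to the $d_1$-th power provides $\mu_y^*\Theta^{\otimes d_1 d_2} \simeq p_y^*\Theta^{\otimes d_1 d_2} = p_w^*\Theta^{\otimes d_1 d_2}$. Chaining the two and invoking Lemma~\ref{lm:finite order extension} once more to descend from $T$ to $k(\mu_A(z))$ yields $\mu_A(z) \in K_\Theta$.

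The only genuine difficulty is bookkeeping: the hypotheses are naturally phrased on $p_i(z) \times A$ while the conclusion concerns $\mu_A(z) \times A$, and the relevant residue fields sit inside $k(z)$ in potentially incompatible ways. Lemma~\ref{lm:finite order extension}, applied at the beginning and end, is precisely what lets the multiplicative calculation be carried out uniformly over the single base $T = \spec k(z)$.
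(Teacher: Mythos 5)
Your proposal is correct and follows essentially the same route as the paper: reduce via the base-change stability of the finite-order locus (Corollary~\ref{co:finite order base change}, resting on Lemma~\ref{lm:finite order extension}) to the case where all relevant points are rational over a common field, and then conclude by an explicit computation with shift automorphisms. The paper leaves that final computation as ``obvious''; you have simply written it out, correctly, including the associativity bookkeeping $\mu_x\circ\sigma_y=\mu_w$ that makes the argument work without assuming $A$ commutative.
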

\begin{proof}
It is obvious that $1\in K_\Theta$.

Let $z$ be a schematic point of $A\times A$ whose projection onto each factor lies in $K_\Theta$. We need to verify that its image under $\mu_A$ belongs to $K_\Theta$ as well. By Corollary~\ref{co:finite order base change}, we can extend the ground field from $k$ to $k(z)$. Thus, we may assume that $z$ is defined over the ground field $k$, in which case the statement is obvious.

Now it is also obvious that $K_\Theta$ is invariant under $\iota_A$.
\end{proof}

\begin{proposition}\label{pr:affine}
Assume that $\Theta$ is an ample line bundle over $A$. If $K_\Theta=A^{top}$, then $A$ is affine.
\end{proposition}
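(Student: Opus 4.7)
The plan is to exploit the hypothesis $K_\Theta = A^{top}$ to make some power of $\Theta$ translation-invariant, and then use ampleness to embed $A$ as a closed subgroup of a general linear group.

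First, I would pass to a uniform power. The generic point $\eta$ of each irreducible component of $A$ lies in $K_\Theta$, so there exists $d_\eta > 0$ at which the line bundle $\mu_A^*\Theta^{\otimes d_\eta}\otimes p_2^*\Theta^{\otimes -d_\eta}$ on $A\times A$ becomes trivial over $\{\eta\}\times A$. By Lemma~\ref{lm:linebgentriv}, this triviality extends to an open neighborhood of $\eta$ in $A$. Taking $d_0$ to be a common multiple of the (finitely many) $d_\eta$, the Mumford morphism $\phi_{\Theta^{\otimes d_0}}\colon A\to\Pic^0(A)$ vanishes on a dense open of $A$; since $\Pic^0(A)$ is separated and $A$ is reduced (group schemes over a field of characteristic zero are smooth), $\phi_{\Theta^{\otimes d_0}}$ vanishes identically. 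Replacing $\Theta$ by $\Theta^{\otimes d_0}$ (still ample), Corollary~\ref{cor:see-saw} applied to $\mu_A^*\Theta\otimes p_2^*\Theta^{-1}$ over the first projection, combined with evaluation along $A\times\{e\}$, yields
\[
    \mu_A^*\Theta\simeq p_1^*\Theta\otimes p_2^*\Theta.
\]

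Next I would promote this multiplicative isomorphism into an $A$-linearization of some further power. Rectifying the associativity of the isomorphism is a cohomological obstruction valued in $\gm$; by the standard Mumford-type argument this obstruction is torsion, so after replacing $\Theta$ by a suitable power $\Theta^{\otimes N}$ (which I also require to be very ample), $\Theta^{\otimes N}$ acquires an $A$-equivariant structure for the translation action of $A$ on itself. Translation then lifts to a linear automorphism of $V := H^0(A,\Theta^{\otimes N})$, producing a morphism of group schemes $\rho\colon A\to\mathrm{GL}(V)$ and an $A$-equivariant closed embedding $A\hookrightarrow\mathbb{P}(V^\vee)$. The kernel of $\rho$ acts trivially on $V$, hence on $\mathbb{P}(V^\vee)$, hence on $A\subset\mathbb{P}(V^\vee)$; but translation by $a$ is the identity map $A\to A$ only when $a$ is the identity, so $\ker\rho$ is trivial. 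In characteristic zero, a group-scheme homomorphism with trivial kernel is a closed immersion, so $A$ is realized as a closed subgroup of $\mathrm{GL}(V)$ and is therefore affine.

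The main obstacle is the linearization step: the isomorphism $\mu_A^*\Theta\simeq p_1^*\Theta\otimes p_2^*\Theta$ is only defined up to a scalar, and arranging a system of isomorphisms compatible with associativity is a genuine cohomological task, requiring the passage to a power of $\Theta$. Everything else—the reduction to $\phi_{\Theta^{\otimes d_0}} = 0$ via Lemma~\ref{lm:linebgentriv} and separatedness of $\Pic^0$, the production of the multiplicative isomorphism via see-saw, and the conclusion that $A$ is affine once it sits as a closed subgroup of $\mathrm{GL}(V)$—is essentially formal.
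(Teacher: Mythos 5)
There are two genuine gaps here. First, your route to the isomorphism $\mu_A^*\Theta\simeq p_1^*\Theta\otimes p_2^*\Theta$ quotes separatedness of $\Pic^0(A)$ and the see-saw principle (Corollary~\ref{cor:see-saw}), but $A$ is not proper --- the whole point of the proposition is to prove it is affine --- and both of those tools are established in the paper only for flat \emph{projective} morphisms with reduced connected geometric fibers. For the non-proper family $p_1:A\times A\to A$, neither representability and separatedness of the Picard functor nor see-saw is available from the paper's toolkit. The paper reaches the same intermediate isomorphism differently: after Lemma~\ref{lm:linebgentriv} produces a trivialization of $\ell=\mu_A^*\Theta\otimes p_2^*\Theta^{-1}$ over $Z\times A$ for a dense open $Z\subset A$, one observes that every codimension-one irreducible component of $(A\times A)\setminus(Z\times A)$ has the form $D_i\times A$, so on the smooth scheme $A\times A$ the Weil divisor of the resulting rational trivialization is pulled back from $A$; hence $\ell\simeq p_1^*\ell'$, and restricting to $A\times 1$ gives $\ell'\simeq\Theta$.

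Second, the linearization step would fail. For non-proper $A$ the space $V=H^0(A,\Theta^{\otimes N})$ is infinite-dimensional; the map to $\mathbb{P}(V^\vee)$ attached to a very ample bundle is only an immersion (a \emph{closed} embedding would force $A$ to be proper); and the claim that the obstruction to rigidifying $\mu_A^*\Theta\simeq p_1^*\Theta\otimes p_2^*\Theta$ into an $A$-equivariant structure is torsion is not true in general: for an abelian variety $A$ and $\Theta\in\Pic^0(A)$, the central extension of Proposition~\ref{pp:squarishequiv} has class $[\Theta]\in\check A(k)=\mathrm{Ext}^1(A,\gm)$, which is not killed by replacing $\Theta$ with a power unless $\Theta$ is torsion. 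None of this machinery is needed. The paper pulls the isomorphism $\mu_A^*\Theta\simeq p_1^*\Theta\otimes p_2^*\Theta$ back along the anti-diagonal $a\mapsto(a,a^{-1})$ to obtain $\cO_A\simeq\Theta\otimes\iota_A^*\Theta$; the right-hand side is ample, so $\cO_A$ is ample, so $A$ is quasi-affine, and a quasi-affine group scheme is affine by the regular-representation argument of Lemma~\ref{lm:qaffine}. Your endgame (a faithful finite-dimensional representation yields a closed immersion into a general linear group, hence affineness) is sound, but the representation has to come from quasi-affineness rather than from sections of $\Theta$.
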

\begin{proof}
Clearly, we may assume that $k$ is algebraically closed. We may also assume that $A$ is reduced (that is, smooth) and connected.
By Lemma~\ref{lm:qaffine} below, it suffices to show that $A$ is quasi-affine. Let $\xi\in A^{top}$ be the generic point.
Replacing $\Theta$ by $\Theta^{\otimes d}$ for a suitable $d$, we may assume that
\[
    \mu_\xi^*\Theta\simeq p_\xi^*\Theta.
\]
Applying Lemma~\ref{lm:linebgentriv} to $p_1:A\times A\to A$ and the line bundle
$\ell:=\mu_A^*\Theta\otimes p_2^*\Theta^{-1}$ over $A\times A$, we see that there exists a dense open subscheme $Z\subset A$ such that $\ell$ is a trivial line bundle over $Z\times A$. We claim that $\ell$ is of the form $p_1^*\ell'$, where $\ell'$ is a~line bundle over $A$. Indeed, let $D_1$, \ldots, $D_n$ be all the codimension one irreducible components of $A-Z$. Since $k$ is algebraically closed, $D_1\times A$, \ldots, $D_n\times A$ are all the codimension one irreducible components of $(A\times A)-(Z\times A)$. Since $A$ is smooth, the trivialization of $\ell$ over $Z\times A$ gives rise to a Weil divisor, and we see that this divisor is a combination of $D_i\times A$. The claim follows.

Restricting $\ell=\mu_A^*\Theta\otimes p_2^*\Theta^{-1}$ to $A\times 1$, we see that $\ell'\simeq\Theta$. Therefore
\[
    \mu_A^*\Theta\simeq p_1^*\Theta\otimes p_2^*\Theta.
\]
Let $\Delta_-:A\to A\times A$ be the anti-diagonal given by $\Delta_-:=(\Id_A\times\iota_A)\circ\Delta_A$, where $\Delta_A$ is the diagonal. Pulling back the above isomorphism via $\Delta_-$, we get
\[
   \cO_A\simeq\Theta\otimes\iota_A^*\Theta.
\]
The right hand side is an ample line bundle; therefore, $\cO_A$ is ample, and we see that $A$ is quasi-affine, as claimed.
\end{proof}

\begin{lemma}\label{lm:qaffine}
A quasi-affine algebraic group $A$ of finite type over $k$ is linear.
\end{lemma}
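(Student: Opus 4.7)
The plan is to produce a faithful finite-dimensional algebraic representation of $A$; a homomorphism of algebraic groups over $k$ with trivial (scheme-theoretic) kernel is a closed immersion, so such a representation exhibits $A$ as a closed subgroup of some $GL(V)$ and hence as a linear group. I would first reduce to the connected case: the identity component $A^\circ$ is a closed subgroup scheme of $A$, and any closed subscheme of a quasi-affine scheme is quasi-affine, so $A^\circ$ is quasi-affine; conversely, if $A^\circ$ is linear then so is $A$, since $A\to A/A^\circ$ is an affine morphism onto the finite (hence affine) group scheme $A/A^\circ$.

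The representation is built from the translation action of $A$ on $R:=\Gamma(A,\cO_A)$. Standard facts about algebraic group actions imply that $R$ is a locally finite rational $A$-module, so $R=\bigcup_\alpha V_\alpha$ with each $V_\alpha\subset R$ a finite-dimensional $A$-stable subspace. For each $\alpha$, let $N_\alpha\subset A$ denote the scheme-theoretic kernel of the induced homomorphism $A\to GL(V_\alpha)$, a closed normal subgroup scheme. Quasi-affineness of $A$ enters at this point: by definition $A\to\overline{A}:=\spec R$ is an open immersion, and it is equivariant for the natural $A$-action on $\overline{A}$ induced from the action on $R$. On the open subscheme $A\subset\overline{A}$ this action is simply translation, hence free; so if a $T$-point $a\colon T\to A$ acts trivially on $\overline{A}$, applying it to $1\in A(k)\subset\overline{A}(k)$ gives $a=a\cdot 1=1$ in $A(T)$. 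Therefore $\bigcap_\alpha N_\alpha=\{1\}$ as group schemes.

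Since $A$ is Noetherian, a descending chain of closed subschemes stabilizes, so already a finite intersection $N_{\alpha_1}\cap\cdots\cap N_{\alpha_n}$ equals $\{1\}$. Taking $V:=V_{\alpha_1}\oplus\cdots\oplus V_{\alpha_n}$, the homomorphism $A\to GL(V)$ has trivial kernel and is therefore a closed immersion, realizing $A$ as a closed subgroup of $GL(V)$. The only slightly delicate inputs are the local finiteness of the translation action on $R$ and the fact that a homomorphism of algebraic groups over a field with trivial scheme-theoretic kernel is a closed immersion; both are classical. The essential use of quasi-affineness is isolated in the faithfulness step, which is nothing more than the observation that translation acts freely on the dense open $A\subset\overline{A}$.
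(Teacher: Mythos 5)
Your proof is correct and follows essentially the same route as the paper's, which likewise takes the regular representation of $A$ on $\Gamma(A,\cO_A)$, notes that it is faithful and a union of finite-dimensional subrepresentations, and extracts a faithful finite-dimensional representation (citing the proof of the affine case in Borel). Your write-up simply makes explicit the details the paper leaves implicit: the open immersion $A\hookrightarrow\spec\Gamma(A,\cO_A)$ behind faithfulness, the Noetherian stabilization, and the fact that a homomorphism with trivial kernel is a closed immersion.
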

\begin{proof}
Identical to the proof in the affine case. In more detail, the regular representation of $A$ is faithful. As this representation is a union of finite dimensional subrepresentations, we can find a faithful finite dimensional representation of $A$ (cf.~the proof of~\cite[Prop.~1.10]{BorelLinAlgGrps}).
\end{proof}

\begin{corollary}\label{cor:fiberestimate}\label{cor:ampleshiftsaffine}
Suppose that the ground field $k$ is algebraically closed and that $\Theta$ is an ample line bundle over $A$. Let $K_\Theta$ be as in~\eqref{eq:shifts};
denote by $K^0_\Theta\subset K_\Theta$ the neutral component of $K_\Theta$ (see Proposition~\ref{pp:quasigroups}). Then the group scheme $K^0_\Theta$ is affine.
\end{corollary}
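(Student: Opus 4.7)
The plan is to reduce the corollary to a direct application of Proposition~\ref{pr:affine}, applied to the closed subgroup $H := K^0_\Theta$ with the restricted polarization $\Theta_H := \Theta|_H$. By Proposition~\ref{pp:quasigroups}, $H$ is a closed connected reduced subgroup scheme of $A$, and $H^{top}$ (being the coset of $H$ containing $1$) is contained in $K_\Theta$. Since $H$ is closed in $A$ and $\Theta$ is ample, $\Theta_H$ is an ample line bundle on the group scheme $H$. So it suffices to show $K_{\Theta_H} = H^{top}$, for then Proposition~\ref{pr:affine} applied to $(H,\Theta_H)$ yields that $H$ is affine.

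To establish $K_{\Theta_H} = H^{top}$, I would fix an arbitrary schematic point $h \in H$ and exhibit $d > 0$ with $\mu_h^* \Theta_H^{\otimes d} \simeq p_h^* \Theta_H^{\otimes d}$ on $H_{k(h)}$. The key observation is that for $h \in H$, since $H$ is a subgroup of $A$, both left and right translations by $h$ preserve $H$, so the shift morphism $\mu_h : A_{k(h)} \to A$ (defined in Section~\ref{sect:shiftsoflb}) restricts to a morphism $H_{k(h)} \to H$; the projection $p_h$ obviously restricts to $H_{k(h)} \to H$ as well. Because $h \in K_\Theta$, by definition~\eqref{eq:shifts} there is $d > 0$ with an isomorphism $\mu_h^* \Theta^{\otimes d} \simeq p_h^* \Theta^{\otimes d}$ of line bundles over $A_{k(h)}$. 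Pulling this isomorphism back along the closed embedding $H_{k(h)} \hookrightarrow A_{k(h)}$ produces the desired isomorphism on $H_{k(h)}$, showing $h \in K_{\Theta_H}$.

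Combining the two paragraphs, $H^{top} \subset K_{\Theta_H}$, and the reverse inclusion is trivial since $K_{\Theta_H} \subset H^{top}$ by definition. Hence $K_{\Theta_H} = H^{top}$, and Proposition~\ref{pr:affine} finishes the proof.

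There is no serious obstacle here; the corollary is essentially a formal consequence of Propositions~\ref{pp:quasigroups} and~\ref{pr:affine} once one checks that the quasi-subgroup formalism restricts properly to the closed subgroup $K^0_\Theta$. The only mildly delicate point is that the shift $\mu_h$ genuinely maps $H_{k(h)}$ into $H$ (rather than some coset), but this is immediate from $h \in H$ and the fact that $H$ is a subgroup.
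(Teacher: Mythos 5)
Your proof is correct and is essentially the paper's own argument: the paper's proof consists of the single line ``Apply Proposition~\ref{pr:affine} to the restriction of the line bundle $\Theta$ to $K^0_\Theta$,'' and your proposal simply spells out the details of why that application is legitimate (ampleness of the restriction, and the check that $K_{\Theta|_{K^0_\Theta}}$ is all of $(K^0_\Theta)^{top}$ because shifts by points of the subgroup preserve it). No discrepancy to report.
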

\begin{proof}
Apply Proposition~\ref{pr:affine} to the restriction of the line bundle $\Theta$ to $K^0_\Theta$.
\end{proof}

\section{Squarish and cubish line bundles}
\begin{conventions*}
In this section, we work over a fixed field $k$. (The relative version of these results is discussed in the next section.)
Let $X$ be a geometrically reduced and geometrically connected projective scheme over $k$. Let $A$ be a group scheme of finite type over $k$. Assume that we are given an action $\mu:A\times X\to X$.
\end{conventions*}

If $A=X$ is an abelian variety acting on itself by translations, then it follows from the Theorem of the Cube that every line bundle over $X$ satisfies a certain property. For general $A$ and $X$, this property is not automatic and we formalize it in Definition~\ref{def:cubish}, calling the corresponding line bundles \emph{cubish}.

Numerically trivial line bundles over abelian varieties satisfy a stronger property; we call such line bundles \emph{squarish} in general situation (Definition~\ref{def:squarish}). In this section, we study squarish and cubish line bundles.
The main result of this section is Proposition~\ref{pr:mainfiberwise}.

\subsection{Squarish line bundles}
\begin{definition} \label{def:squarish}
A line bundle $\ell$ over $X$ is said to be \emph{squarish} if the line bundle $\mu^*\ell\otimes p_2^*\ell^{-1}$ over $A\times X$
is isomorphic to the pullback of a line bundle over $A$.
\end{definition}

In other words, $\ell$ is squarish if and only if there exists a line bundle $\ell_A$ over $A$ such that
$\mu^*\ell\otimes p_2^*\ell^{-1}\simeq p_1^*\ell_A$.
From Lemma~\ref{lm:dirimagestructsheaf}, we obtain an isomorphism
\[\ell_A\simeq p_{1,*}(\mu^*\ell\otimes p_2^*\ell^{-1});\]
thus, $\ell_A$ is determined by $\ell$.

\begin{proposition}\label{pp:abelian variety}
Assume that $k$ is algebraically closed and suppose $A=X$ is an abelian variety, acting on itself by translations. Then a line bundle $\ell$ is squarish if and only if it is numerically trivial.
\end{proposition}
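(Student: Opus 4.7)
The idea is to unpack the squarish condition using the see-saw principle (Corollary~\ref{cor:see-saw}) and then match it with the classical characterization of $\Pic^0$ for an abelian variety. Consider the line bundle $M := \mu^*\ell \otimes p_2^*\ell^{-1}$ on $A \times A$. By definition, $\ell$ is squarish if and only if $M$ is the pullback of a line bundle under $p_1 : A \times A \to A$. Since $A$ is reduced, Corollary~\ref{cor:see-saw} applies to $p_1$, so squarishness is equivalent to $M$ being trivial on every geometric fiber $\{x\} \times A$.

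Now restrict $M$ to $\{x\} \times A$ for a closed point $x \in A(\bar k)$: the restriction of $\mu$ is the translation $t_x : A \to A$, and the restriction of $p_2$ is the identity. Hence $M|_{\{x\}\times A} \simeq t_x^*\ell \otimes \ell^{-1}$, and we conclude that $\ell$ is squarish if and only if $t_x^*\ell \simeq \ell$ for every $x \in A(\bar k)$.

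Finally, I would invoke the standard characterization of $\Pic^0$ on an abelian variety: by the Theorem of the Square, the map $\phi_\ell : A \to \Pic(A)$, $x \mapsto t_x^*\ell \otimes \ell^{-1}$, is a group homomorphism, and $\ell \in \Pic^0(A)$ if and only if $\phi_\ell$ is identically zero (see Mumford's book on abelian varieties). Since $\Pic^0(A)$ is proper, it coincides with $\Pic^\tau(A)$, so $\ell \in \Pic^0(A)$ is the same as $\ell$ being numerically trivial. Combining the three equivalences yields the proposition.

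I do not anticipate a real obstacle here, since each step is either a direct application of a result already in the paper or a well-known classical fact about abelian varieties. The only care needed is in checking that the hypothesis of Corollary~\ref{cor:see-saw} (reducedness of the base) is met, which holds because $A$ is smooth over an algebraically closed field.
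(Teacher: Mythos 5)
Your first two steps are sound and essentially retrace the paper's route. The see-saw argument showing that squarishness is equivalent to $t_x^*\ell\simeq\ell$ for all $x\in A(\bar k)$ is Proposition~\ref{pr:squarish reduced} specialized to this situation (checking closed points suffices because the triviality locus of Proposition~\ref{pr:trivlocus} is closed and $k=\bar k$), and the identification of the translation-invariant line bundles with $\Pic^0(A)$ is exactly the content of the references to Mumford in the paper's proof.

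The gap is in your last step: properness of $\Pic^0(A)$ does not imply $\Pic^0(A)=\Pic^\tau(A)$. By definition $\Pic^\tau$ is the union of the (finitely many) torsion cosets of $\Pic^0$ in $\Pic$, so it is proper whenever $\Pic^0$ is, and the two coincide precisely when the N\'eron--Severi group is torsion-free. That is a genuine condition which fails for other varieties with proper, even trivial, $\Pic^0$: an Enriques surface has $\Pic^0=0$ but its canonical bundle is a nontrivial $2$-torsion, hence numerically trivial, class in $\Pic^\tau$. So some input specific to abelian varieties is needed, and the paper supplies it: if $[\ell^{\otimes d}]\in\Pic^0(A)(k)$, then divisibility of the group $\Pic^0(A)(k)$ yields $\ell_0\in\Pic^0(A)(k)$ with $\ell^{\otimes d}\simeq\ell_0^{\otimes d}$, so $\ell\otimes\ell_0^{-1}$ is torsion in $\Pic(A)(k)$, and torsion line bundles on an abelian variety lie in $\Pic^0$ (Mumford, Sect.~8, Observation~(v)). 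Equivalently, in your own notation, $\ell\mapsto\phi_\ell$ induces an injection of the N\'eron--Severi group into $\Hom(A,\check A)$, which is torsion-free because multiplication by $d$ on $A$ is surjective. Either argument closes the gap; ``properness'' alone does not.
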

\begin{proof}
By the first definition in~\cite[Sect.~8]{MumfordAbelian} and the theorem in~\cite[Sect.~13]{MumfordAbelian}, a line bundle is squarish if and only if it corresponds to a point of the dual abelian variety $\check X=\Pic^0(X)$. It remains to show that $\Pic^0(X)=\Pic^\tau(X)$. Let $\ell$ be a line bundle on $X$ such that for some $d>0$ we have $[\ell^{\otimes d}]\in\Pic^0(X)(k)$. By Proposition~2 of~\cite[Sect.~2]{MumfordAbelian}, the group $\Pic^0(X)(k)$ is divisible, so there is $\ell_0\in\Pic^0(X)(k)$ such that $\ell^{\otimes d}\simeq \ell_0^{\otimes d}$. Thus $\ell\otimes\ell_0^{-1}$ has finite order in the group $\Pic^0(X)(k)$, so applying Observation~(v) of~\cite[Sect.~8]{MumfordAbelian}, we see that $[\ell\otimes\ell_0^{-1}]\in\Pic^0(X)(k)$. The statement follows.
\end{proof}

\begin{example}\label{ex:compactified Jacobian}
Let $C$ be an integral projective curve. Let $\Pic^0(C)$ be the Jacobian of~$C$.
If $C$ is singular, $\Pic^0(C)$ is not compact, but it admits a natural compactification: the compactified Jacobian $\overline{\Pic}^0(C)$, introduced by Altman, Iarrobino, and Kleiman in~\cite{AltmanIarrobinoKleiman}.
Points in $\overline{\Pic}^0(C)$ parameterize torsion-free coherent sheaves $\cF$ on $C$ such that the generic rank of
$\cF$ is $1$ and $\chi(\cF)=\chi(\cO_C)$.
The action of $\Pic^0(C)$ on itself by translations extends to an action of $\Pic^0(C)$ on $\overline{\Pic}^0(C)$ given by the tensor product
\[
    (L,\cF)\mapsto L\otimes\cF,\qquad(L\in\Pic^0(C),\cF\in\overline{\Pic}^0(C)).
\]

Now fix $L_0\in\Pic^0(C)$. It defines a line bundle $\ell$ over $\overline{\Pic}^0(C)$ whose fiber over a~torsion-free sheaf $\cF\in\overline{\Pic}^0(C)$ is given by
\[
    \ell_{\cF}:=\detRG(L_0\otimes\cF)\otimes\detRG(\cF)^{-1}.
\]
It is not hard to see that this line bundle is squarish. This claim appears implicitly in~\cite[Prop.~1]{ArinkinJacobians}. If $C$ is smooth, this is a special case of Proposition~\ref{pp:abelian variety}.
\end{example}

The notion of a squarish line bundle can be reformulated as follows. A line bundle $\ell$ over $X$ gives rise to a point
$[\ell]\in\Pic(X)(k)$. The action of $A$ on $X$ induces an action
\[\mu_{\Pic}:A\times\Pic(X)\to\Pic(X)\]
of $A$ on $\Pic(X)$.

\begin{proposition}\label{pr:SquarishFixed}
A line bundle $\ell$ is squarish if and only if $[\ell]\in\Pic(X)(k)$ is fixed by $A$.
\end{proposition}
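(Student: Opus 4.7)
The plan is to unpack both sides via the moduli interpretation of $\Pic(X)$ and then to upgrade an fppf-local statement to a global one using that $p_{1,*}\cO_{A\times X}=\cO_A$ holds universally.

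First, I would describe the action $\mu_{\Pic}$ explicitly. For any $k$-scheme $S$, an element $a\in A(S)$ acts on the class of a line bundle $N$ on $X_S$ in $\Pic(X)(S)$ by sending it to the class of $\tau_a^*N$, where $\tau_a:X_S\to X_S$ is the translation by $a$. In particular, the morphism $\mu_{\Pic}(\cdot,[\ell]):A\to\Pic(X)$ is the classifying morphism of the $A$-family of line bundles $\mu^*\ell$ on $A\times X$, whereas the constant morphism $A\to\Pic(X)$ with value $[\ell]$ is the classifying morphism of $p_2^*\ell$. Therefore $[\ell]$ is fixed by $A$ if and only if $\mu^*\ell$ and $p_2^*\ell$ represent the same class in $\Pic(X/k)(A)$.

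Since $\Pic(X/k)$ is the fppf sheafification of the functor of isomorphism classes of line bundles, the previous condition is equivalent to the existence of an fppf cover $f:A'\to A$ and a line bundle $M'$ on $A'$ such that $(f\times\Id_X)^*(\mu^*\ell\otimes p_2^*\ell^{-1})\simeq (\pi')^*M'$, where $\pi':A'\times X\to A'$ is the projection. The ``if'' direction of the proposition is then immediate: a squarish $\ell$ makes $\cM:=\mu^*\ell\otimes p_2^*\ell^{-1}$ globally the pullback of a line bundle from $A$, hence certainly so fppf-locally.

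For the ``only if'' direction, I need to upgrade the fppf-local conclusion to a global one; this is the only nontrivial step. Write $\pi:=p_1:A\times X\to A$. By Corollary~\ref{cor:cohflat}, applied to $X\to\spec k$ and base-changed to $A$, the canonical map $\cO_A\to\pi_*\cO_{A\times X}$ is an isomorphism and remains so under further base change. The projection formula then gives $\pi'_*(\pi')^*M'\simeq M'$, and flat base change yields $f^*(\pi_*\cM)\simeq M'$; hence $\pi_*\cM$ is fppf-locally a line bundle on $A$ and therefore a line bundle by fppf descent. Setting $\ell_A:=\pi_*\cM$, the adjunction map $\pi^*\ell_A\to\cM$ restricts over $A'$ to the identity on $(\pi')^*M'$, so it is an isomorphism by faithfully flat descent. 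This exhibits $\cM\simeq\pi^*\ell_A$, proving that $\ell$ is squarish.
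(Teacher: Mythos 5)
Your proof is correct and follows the same strategy as the paper's: both conditions are reduced to the assertion that the class of $\mu^*\ell\otimes p_2^*\ell^{-1}$ in $\Pic(X)(A)$ is trivial, so the only substantive point is that triviality of this class is equivalent to the bundle being \emph{globally} the pullback of a line bundle from $A$. The paper disposes of that point by citing Proposition~\ref{pp:Cartesian}\eqref{pp:Cart2} (proved there by reducing, \'etale-locally, to the case of a section), whereas you re-derive it directly from Corollary~\ref{cor:cohflat} via the projection formula, flat base change, and fppf descent --- a valid, self-contained substitute for the cited ingredient.
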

\begin{proof}
Consider the line bundle $\mu^*\ell\otimes p_2^*\ell^{-1}$ over $X\times A$. As an $A$-family of line bundles over $X$, it gives a morphism $s'_\ell:A\to\Pic(X)$. Now both conditions are equivalent to $s'_\ell$ being the zero morphism (use Proposition~\ref{pp:Cartesian}\eqref{pp:Cart2}).
\end{proof}

Let $a\in A$ be a point. Similarly to Section~\ref{sect:shiftsoflb}, we denote by $\mu_a:X_{k(a)}\to X$ and $p_a:X_{k(a)}\to X$
the shift and the projection, respectively.

\begin{proposition}\label{pr:squarish reduced}
Suppose $A$ is reduced. Then a line bundle $\ell$ over $X$ is squarish if and only if for every point $a\in A$,
we have $\mu_a^*\ell\simeq p_a^*\ell$.
\end{proposition}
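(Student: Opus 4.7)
The plan is to reinterpret squarishness as a factorization property of the associated morphism to the Picard space, and then combine the closed-embedding property of the zero section $\iota':\spec k\to\Pic^\tau(X)$ (Proposition~\ref{pr:zerosection}) with the reducedness of $A$ to upgrade a pointwise condition to a scheme-theoretic one.

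First I would set $\ell':=\mu^*\ell\otimes p_2^*\ell^{-1}$, viewed as an $A$-family of line bundles on $X$, and let $s'_\ell:A\to\Pic(X)$ be the associated classifying morphism (exactly as in the proof of Proposition~\ref{pr:SquarishFixed}). By Proposition~\ref{pp:Cartesian}\eqref{pp:Cart2} (applied with $B=\spec k$ and $S=A$), $\ell$ is squarish if and only if $s'_\ell$ factors through $\iota'$. On the other hand, for a schematic point $a\in A$, the restriction $\ell'|_{\{a\}\times X}$ is isomorphic to $\mu_a^*\ell\otimes p_a^*\ell^{-1}$, and this is trivial exactly when $s'_\ell(a)=0$ in $\Pic(X_{k(a)})$. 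The "only if" direction then follows immediately and does not require $A$ to be reduced.

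For the converse, assume $\mu_a^*\ell\simeq p_a^*\ell$ for every schematic point $a\in A$. Then $s'_\ell(a)$ is the trivial class, and in particular lies in $\Pic^\tau(X)(k(a))$, for every such $a$. Since $\Pic^\tau(X)\subset\Pic(X)$ is open, the preimage $(s'_\ell)^{-1}(\Pic^\tau(X))$ is an open subscheme of $A$ whose underlying topological space equals $A^{top}$, hence equals $A$. Thus $s'_\ell$ lands in $\Pic^\tau(X)$. By Proposition~\ref{pr:zerosection}, $\iota':\spec k\to\Pic^\tau(X)$ is a closed embedding, so $Z:=(s'_\ell)^{-1}(\iota')$ is a closed subscheme of $A$ whose underlying space equals $A^{top}$ by hypothesis. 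Reducedness of $A$ then forces $Z=A$ scheme-theoretically, so $s'_\ell$ factors through $\iota'$, and $\ell$ is squarish.

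The only real content, and the only place where the hypothesis on $A$ enters, is the final step $Z^{top}=A^{top}\Rightarrow Z=A$; without reducedness, $Z$ could be a strictly smaller nilpotent thickening of $A_{red}$, and the argument would fail. Everything else is a direct unwinding of definitions and the material already developed in Section~\ref{sect:relativePic}.
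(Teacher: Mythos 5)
Your proof is correct. The paper disposes of this in one line by invoking the see-saw principle (Corollary~\ref{cor:see-saw}) for the family $\mu^*\ell\otimes p_2^*\ell^{-1}$ over $p_1:A\times X\to A$; your argument re-derives exactly that principle in this instance, replacing the citation of Corollary~\ref{cor:see-saw} by the combination of Proposition~\ref{pp:Cartesian}\eqref{pp:Cart2} with the closedness of the zero section (Proposition~\ref{pr:zerosection}). Since Corollary~\ref{cor:see-saw} and Proposition~\ref{pr:zerosection} are both deduced from Proposition~\ref{pr:trivlocus}, the two routes rest on the same mechanism --- a closed subscheme of a reduced scheme with full underlying space is the whole scheme --- and you correctly identify reducedness of $A$ as the only place that mechanism is needed. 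One small point in your favor: the statement is phrased in terms of schematic points $a\in A$ and triviality over $k(a)$, whereas Corollary~\ref{cor:see-saw} speaks of geometric fibers; your version handles residue fields directly (the two are interchangeable here, since triviality over $k(a)$ passes to $\overline{k(a)}$, but your phrasing avoids even having to say this).
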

\begin{proof}
The `if' direction follows from Corollary~\ref{cor:see-saw} applied to $\mu^*\ell\otimes p_2^*\ell^{-1}$. The `only if' direction is obvious.
\end{proof}

A line bundle $\ell$ is squarish if and only if it is `projectively equivariant' under the action of $A$, in the following
sense:

\begin{proposition}[{cf.~\cite[Sect.~10.2]{PolishchukAbelian}}]\label{pp:squarishequiv} A line bundle $\ell$ is squarish if and only if
there exists a central extension of group schemes
\begin{equation}\label{eq:extension}
    1\to\gm\to\tA\to A\to1
\end{equation}
and a structure of an $\tA$-equivariant line bundle on $\ell$ (here $\tA$ acts on $X$ through the quotient $\tA\to A$)
such that the action of $\gm\subset\tA$ on $\ell$ is the tautological action by fiberwise dilations.

Moreover, one can take $\tA$ to be complement of the zero section in the total space of line bundle $\ell_A$ over $A$
(described after Definition~\ref{def:squarish}).
\end{proposition}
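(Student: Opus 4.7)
The proof turns on the line bundle $\ell_A = p_{1,*}(\mu^*\ell\otimes p_2^*\ell^{-1})$ on $A$ (which appeared after Definition~\ref{def:squarish}) and on descent along the $\gm$-torsor $\tA\to A$, where $\tA$ is the complement of the zero section in the total space of $\ell_A$.

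For the \emph{only if} direction, suppose $\ell$ is squarish and fix the canonical identification $\mu^*\ell\otimes p_2^*\ell^{-1}\simeq p_1^*\ell_A$ on $A\times X$. I would first establish that $\ell_A$ is multiplicative, i.e.\ that $m_A^*\ell_A\simeq p_1^*\ell_A\otimes p_2^*\ell_A$ canonically on $A\times A$. Write $\mu_{123}:A\times A\times X\to X$ for $(a_1,a_2,x)\mapsto a_1 a_2 x$. The line bundle $\mu_{123}^*\ell\otimes p_3^*\ell^{-1}$ on $A\times A\times X$ can be computed by the two factorizations of $\mu_{123}$ as $(a_1,a_2,x)\mapsto(a_1,a_2 x)\mapsto a_1(a_2 x)$ and $(a_1,a_2,x)\mapsto(a_1 a_2,x)\mapsto(a_1 a_2)x$: telescoping the first by applying the squarish identification twice yields $p_1^*\ell_A\otimes p_2^*\ell_A$, while the second yields $p_{12}^*m_A^*\ell_A$. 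Pushing down along $p_{12}:A\times A\times X\to A\times A$, whose fibers are reduced and geometrically connected so that $p_{12,*}\cO=\cO_{A\times A}$ by Corollary~\ref{cor:cohflat}, gives the desired multiplicative isomorphism. An analogous bookkeeping on $A\times A\times A\times X$ establishes the associativity cocycle. Multiplicative line bundles with associativity correspond to central extensions of $A$ by $\gm$, and the extension associated with $\ell_A$ is precisely the $\tA$ of the proposition.

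To produce the $\tA$-linearization of $\ell$, I would pull the squarish identification back along $\pi\times\Id_X:\tA\times X\to A\times X$ and combine it with the tautological trivialization $\pi^*\ell_A\simeq\cO_{\tA}$ (a point $\tilde a\in\tA$ is nonzero in the fiber $(\ell_A)_{\pi(\tilde a)}$ and so trivializes $(\pi^*\ell_A)_{\tilde a}$). The result is an isomorphism $\sigma:\mu_{\tA}^*\ell\simeq p_2^*\ell$ on $\tA\times X$; its cocycle condition on $\tA\times\tA\times X$ reduces via the same push-pull mechanism to the multiplicative cocycle for $\ell_A$. Since the tautological trivialization of $\pi^*\ell_A$ scales with weight one under the $\gm$-action on $\tA$, the restriction of $\sigma$ to $\gm\subset\tA$ acts on $\ell$ by multiplication by $\lambda$, i.e.\ by fiberwise dilations.

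The \emph{if} direction is pure descent: given $\tA$ and a linearization $\sigma$ with $\gm$-action by dilations, the dilation hypothesis says precisely that $\sigma$ becomes $\gm$-equivariant once $p_2^*\ell$ carries its natural weight-zero $\gm$-structure and $\mu_{\tA}^*\ell$ carries the weight-one twist of its natural structure. Descent along the $\gm$-torsor $\tA\times X\to A\times X$ then exhibits $\mu^*\ell\otimes p_2^*\ell^{-1}$ as a pullback from $A$. The main technical obstacle, common to both directions, is the verification of the associativity cocycle for the multiplicative structure on $\ell_A$ (equivalently, associativity of the group law on $\tA$) and the analogous cocycle for $\sigma$. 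Both are mechanical but demand careful bookkeeping of canonical isomorphisms on triple products; each reduces, via Corollary~\ref{cor:cohflat}, to an identity of line bundles pulled back from a triple product of $A$.
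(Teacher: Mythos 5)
Your proof is correct, but it takes a genuinely different route from the paper's. For the ``only if'' direction the paper defines $\tA$ functorially, as the scheme of lifts of the $A$-action to $\ell$: it sends a test scheme $Z$ to pairs $(\phi,\eta)$ with $\phi:Z\to A$ and $\eta:\mu_\phi^*\ell\xrightarrow{\simeq}p_2^*\ell$, observes that this functor is represented by the complement of the zero section of $\ell_A$, and gets the group law, the central extension structure, and the equivariance of $\ell$ essentially for free, since composition of isomorphisms is strictly associative. You instead run the classical theta-group construction: you first prove that $\ell_A$ is multiplicative by the two-factorization computation of $\mu_{123}^*\ell\otimes p_3^*\ell^{-1}$ on $A\times A\times X$ followed by push-down along $p_{12}$ (correct, and the descent of the isomorphism does follow from Corollary~\ref{cor:cohflat} plus the projection formula), and then build $\tA$ from the multiplicative torsor. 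The price of your route is exactly what you flag: the associativity cocycle on $A\times A\times A$ must be verified by hand (both associations pull back to the same telescoped identification on $A\times A\times A\times X$, so the discrepancy function is $1$ after pullback and hence $1$ by surjectivity of the projection), whereas in the paper's functorial construction this is automatic. For the ``if'' direction the paper is slicker still: an $\tA$-equivariant structure shows $[\ell]\in\Pic(X)(k)$ is fixed by $\tA$, hence by $A$ since the action factors through the quotient, and Proposition~\ref{pr:SquarishFixed} concludes; notably this does not even use the dilation condition. Your descent argument along the $\gm$-torsor $\tA\times X\to A\times X$ is also valid and is more self-contained (it genuinely uses the weight-one condition to identify $\mu^*\ell\otimes p_2^*\ell^{-1}$ with the pullback of the associated line bundle of $\tA\to A$), but it is independent of the machinery the paper has already set up. Both arguments are sound; the paper's buys brevity by exploiting Proposition~\ref{pr:SquarishFixed} and the moduli-of-lifts description, yours makes the multiplicative structure on $\ell_A$ explicit, which is information the paper only extracts implicitly.
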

\begin{proof} For the `if' direction, we see that $[\ell]\in\Pic(X)(k)$ is fixed by the action of~$\tA$. Since $\tA$ acts on $X$,
and therefore on $\Pic(X)(k)$, through the quotient $\tA\to A$, the claim follows from Proposition~\ref{pr:SquarishFixed}.

For the `only if' direction, we can construct $\tA$ as the scheme of lifts of the action of $A$ on $X$ to $\ell$. Explicitly,
let $\tA$ be complement of the zero section in the total space of line bundle $\ell_A$. It is easy to see that~$\tA$ represents the functor that sends a test $k$-scheme $Z$ to the set of pairs
\[
    \Mor(Z,\tA)=\{(\phi,\eta)|\phi:Z\to A,\eta:\mu_\phi^*\ell\xrightarrow{\simeq}p_2^*\ell\}.
\]
Here $\mu_\phi$ is the composition
$Z\times X\xrightarrow{\phi\times\Id_X}A\times X\xrightarrow{\mu}X$.
It is easy to see that~$\tA$ is indeed a group scheme that is a central extension of $A$ by $\gm$, and that $\ell$ is equivariant under the
action of $\tA$.
\end{proof}

\begin{remarks}\label{rm:squarish}
\stepzero
\noindstep One can check that the extension \eqref{eq:extension} and the $\tA$-equivariant structure on $\ell$ satisfying the conditions of the proposition are unique up to a natural isomorphism.

\noindstep\label{rm:sq1comm}
Assume that $k$ is a field of characteristic zero and $A$ is a connected commutative group. Then $\tA$ is automatically commutative. Indeed, the commutator map on $\tA$ gives a bilinear pairing $A\times A\to\gm$. However, any bilinear pairing $A_1\times A_2\to\gm$, where $A_1$ and $A_2$ are connected commutative $k$-groups is trivial. Indeed, assume first that $A_1$ is affine. Then, by the Cartier duality, the datum of such a bilinear pairing is equivalent to the datum of a homomorphism from $A_2$ to the formal group $\Hom(A_1,\gm)$ (see~\cite[Exp.~$\mathrm{VII_B}$, (2.2.2)]{SGA3-1} or~\cite[(5.2)]{LaumonFourierGeneralized}). However, any morphism from a reduced connected variety to the formal variety $\Hom(A_1,\gm)$ is constant (that is, factors through a schematic point of $\Hom(A_1,\gm)$). This implies that any homomorphism from $A_2$ to $\Hom(A_1,\gm)$ must be trivial.
In the general case, we see that the maximal affine subgroup of $A_1$ is in the kernel of the pairing, so we may assume that $A_1$ is an abelian variety (by the Chevalley Theorem). In this case any morphism from $A_1\times A_2$ to $\gm$ factors through the projection to $A_2$. But any such bilinear pairing must be trivial.
\end{remarks}

\subsection{Cohomology of line bundles}
We keep the notation of the previous subsection. The following claim is a generalization of~\cite[Prop.~1]{ArinkinJacobians} (cf.~Observation (vii) in~\cite[Sect.~8]{MumfordAbelian}).
Recall that if $x$ is a $k$-rational point of $X$, then we have a morphism $\mu_x:A\to X$.

\begin{proposition}\label{pr:nonvanishcoh}
Suppose $\ell$ is a squarish line bundle over $X$, and $H^\bullet(X,\ell)\ne0$. Let $x\in X$ be a $k$-rational point. Then for some $d>0$
\[
    \mu_x^*\ell^{\otimes d}\simeq\cO_A.
\]
\end{proposition}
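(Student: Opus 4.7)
The plan is to use the $\tA$-equivariant structure from Proposition~\ref{pp:squarishequiv} to turn nonvanishing cohomology into a character of $\tA$, and then read off a trivialization of a power of $\ell_A$ from the weight of this character on the central $\gm$.

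First, I would identify $\mu_x^*\ell$ with $\ell_A$ as a line bundle on $A$: restricting the squarish isomorphism $\mu^*\ell\otimes p_2^*\ell^{-1}\simeq p_1^*\ell_A$ from $A\times X$ to $A\times\{x\}$ gives $\mu_x^*\ell\simeq\ell_A\otimes_k\ell_x$, and since $x\in X(k)$ the fiber $\ell_x$ is a one-dimensional $k$-vector space, so (non-canonically) $\mu_x^*\ell\simeq\ell_A$. It therefore suffices to find $d>0$ with $\ell_A^{\otimes d}\simeq\cO_A$.

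Next, by Proposition~\ref{pp:squarishequiv} choose a central extension $1\to\gm\to\tA\to A\to1$ together with an $\tA$-equivariant structure on $\ell$ in which the central $\gm$ acts on fibers of $\ell$ by weight $1$; here $\tA\to A$ is the $\gm$-torsor associated with $\ell_A$. Since $X$ is projective and $\tA$ is a $k$-group scheme of finite type, each $H^i(X,\ell)$ is a finite-dimensional algebraic representation of $\tA$, and by centrality of $\gm$ together with the weight-$1$ action on fibers of $\ell$, the group $\gm$ acts on every $H^i(X,\ell)$ by weight $1$. Pick $i$ with $V:=H^i(X,\ell)\ne 0$ and set $d:=\dim_k V>0$; then the determinant of the $\tA$-representation on $V$ is a character $\chi\colon\tA\to\gm$ whose restriction to the central $\gm$ is $\lambda\mapsto\lambda^d$. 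Via the associated-bundle description of $\ell_A$ from the $\gm$-torsor $\tA\to A$, such a character is the same datum as a nowhere-vanishing global section of $\ell_A^{\otimes d}$ (or of $\ell_A^{\otimes -d}$, depending on conventions), whence $\ell_A^{\otimes d}\simeq\cO_A$, and combined with the first step this gives $\mu_x^*\ell^{\otimes d}\simeq\cO_A$.

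The only point requiring real care is that the $\tA$-action on $H^i(X,\ell)$ is algebraic, so that $\chi$ is a morphism of $k$-group schemes and not merely a homomorphism of abstract groups; this is a standard consequence of the properness of $X$ together with the equivariance of $\ell$, obtained via the representability of the functor of automorphisms of the pair $(X,\ell)$ covering the action of $\tA$. Once this is granted, the remainder of the argument is formal, and one may view it as the group-theoretic shadow of the classical Mumford observation that nonvanishing cohomology of a line bundle in $\Pic^0$ of an abelian variety forces triviality.
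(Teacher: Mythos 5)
Your argument is correct and follows essentially the same route as the paper's proof: both identify $\mu_x^*\ell$ with $\ell_A$, use the central extension $\tA$ from Proposition~\ref{pp:squarishequiv} acting on cohomology with the central $\gm$ acting by weight one, and extract from the determinant character a splitting of the $d$-th multiple of the extension, hence a nowhere-vanishing section of $\mu_x^*\ell^{\otimes d}$. The only cosmetic difference is that the paper takes $V=H^\bullet(X,\ell)$ while you take a single nonzero $H^i(X,\ell)$, which changes nothing.
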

\begin{proof}
The proof essentially repeats that of~\cite[Prop.~1]{ArinkinJacobians}. Let $\tA$ be complement of the zero section in the total space of $\ell_A$. Proposition~\ref{pp:squarishequiv} gives a structure of a~group scheme on $\tA$ and an action of $\tA$ on $\ell$ such that $\gm\subset\tA$ acts tautologically by dilations. It is easy to see that $\ell_A\simeq\mu_x^*\ell$ (in particular, the isomorphism class of $\mu_x^*\ell$ is independent of the choice of $x$). Thus $\tA$ is a $\gm$-torsor over $A$, and the corresponding line bundle
\[
    \tA\times^{\gm}\mathbb{A}^1
\]
is isomorphic to $\mu_x^*\ell$.

The group scheme $\tA$ acts on $V:=H^\bullet(X,\ell)$. Note that $V$ is finite dimensional because $X$ is proper; put $d=\dim V$. We obtain an action of $\tA$ on $\det(V)$ such that $\gm$ acts as
\[
    a\mapsto a^d.
\]
The corresponding character splits the $d$-th multiple of the extension~\eqref{eq:extension}. This splitting gives a nowhere vanishing section of $\mu_x^*\ell^{\otimes d}$.
\end{proof}
\begin{remark}
Suppose the ground field $k$ is algebraically closed and $\tA$ is commutative. For instance, this holds if $k$ is an algebraically closed
field of characteristic zero and $A$ is connected and commutative, see Remark~\ref{rm:squarish}\eqref{rm:sq1comm}. We can then take
$V$ to be a one-dimensional subrepresentation $V\subset H^\bullet(X,\ell)$. Therefore, we can always guarantee that $d=1$ in this case (cf.~\cite[Prop.~1]{ArinkinJacobians}).
\end{remark}

\subsection{Cubish line bundles}
Let $\mu_A:A\times A\to A$ be the group multiplication, let  $\nu_A:A\times A\to A$ be the division morphism as in Section~\ref{sect:quasi-subgr}. Let $\Theta$ be a line bundle over $X$. Consider the product $A\times A\times X$ and denote by $p_{23}$ and $p_{13}$ its projections onto $A\times X$, by $p_{12}$ its projection onto $A\times A$. Consider the following four morphisms
\[
    \mu\circ(\mu_A\times\Id_X),\mu\circ p_{23},\mu\circ p_{13},p_3:A\times A\times X\to X,
\]

\begin{definition}\label{def:cubish} The line bundle $\Theta$ is \emph{cubish} (with respect to the action of $A$ on $X$) if the line bundle
\begin{equation}\label{eq:cubish}
(\mu\circ(\mu_A\times\Id_X))^*\Theta\otimes p_3^*\Theta\otimes(\mu\circ p_{23})^*\Theta^{-1}\otimes(\mu\circ p_{13})^*\Theta^{-1}
\end{equation}
over $A\times A\times X$ is isomorphic to the pullback of a line bundle over $A\times A$.
\end{definition}

\begin{example}\label{ex:ThmCube}
Let $X=A$ be an abelian variety acting on itself by translations. The Theorem of the Cube implies that any line bundle over $X$ is cubish; see~\cite[Thm.~8.4]{PolishchukAbelian}.
\end{example}

\begin{proposition}
A line bundle $\Theta$ is cubish if and only if the morphism $s'_\Theta:A\to\Pic(X)$ corresponding to the line bundle $\mu^*\Theta\otimes p_2^*\Theta^{-1}$ over $A\times X$ is a group homomorphism.
\end{proposition}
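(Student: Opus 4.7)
The plan is to recognize the line bundle in~\eqref{eq:cubish} as a universal object measuring the failure of $s'_\Theta$ to be a group homomorphism, and then to invoke Proposition~\ref{pp:Cartesian}\eqref{pp:Cart2}. First I would unpack the condition that $s'_\Theta:A\to\Pic(X)$ be a homomorphism. Since the restriction of $\mu^*\Theta\otimes p_2^*\Theta^{-1}$ to $\{1\}\times X$ is canonically trivial, the morphism $s'_\Theta$ sends $1\in A$ to the identity of $\Pic(X)$ automatically. So being a group homomorphism amounts to the equality of morphisms $A\times A\to\Pic(X)$
\[
    s'_\Theta\circ\mu_A=\mu_{\Pic(X)}\circ(s'_\Theta\times s'_\Theta),
\]
or equivalently, to the vanishing of the ``defect'' morphism $\psi:A\times A\to\Pic(X)$ defined in additive notation in $\Pic$ by $\psi(a,b):=s'_\Theta(ab)-s'_\Theta(a)-s'_\Theta(b)$.

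Next, I would realize $\psi$ at the level of line bundles on $A\times A\times X$. By the definition of $s'_\Theta$, the line bundle on $(A\times A)\times X$ whose class gives $\psi$ is
\[
    (\mu_A\times\Id_X)^*(\mu^*\Theta\otimes p_2^*\Theta^{-1})\otimes p_{13}^*(\mu^*\Theta\otimes p_2^*\Theta^{-1})^{-1}\otimes p_{23}^*(\mu^*\Theta\otimes p_2^*\Theta^{-1})^{-1},
\]
and a direct computation of pullbacks (the three copies of $p_3^*\Theta^{\pm1}$ combine into a single factor $p_3^*\Theta$) shows that this simplifies to exactly the line bundle in~\eqref{eq:cubish}. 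By construction, the restriction of this line bundle to $\{(a,b)\}\times X$ represents $\psi(a,b)\in\Pic(X)$.

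Finally, I would apply Proposition~\ref{pp:Cartesian}\eqref{pp:Cart2}, base changed along $A\times A\to\spec k$: a line bundle over $(A\times A)\times X$ is isomorphic to the pullback of a line bundle from $A\times A$ if and only if the corresponding morphism $A\times A\to\Pic(X)$ factors through the zero section $\iota'$, i.e.\ is the zero morphism. Combining this with the identification in the previous step gives that $\Theta$ is cubish if and only if $\psi=0$, which is the condition that $s'_\Theta$ is a group homomorphism. The only real work is the bookkeeping in the middle step, confirming that the specific combination of pullbacks in~\eqref{eq:cubish} genuinely encodes $\psi$ and not, for instance, $\psi$ twisted by a pullback from $A\times A$; this is a direct check on fibers and does not present a serious obstacle.
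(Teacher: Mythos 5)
Your proposal is correct and follows essentially the same route as the paper: the paper also observes that $s'_\Theta$ is a homomorphism iff the two induced morphisms $A\times A\to\Pic(X)$ coincide, identifies the quotient of the corresponding line bundles on $A\times A\times X$ with the bundle in~\eqref{eq:cubish}, and concludes via Proposition~\ref{pp:Cartesian}\eqref{pp:Cart2}. Your bookkeeping of the three $p_3^*\Theta^{\pm1}$ factors is accurate, so there is nothing to add.
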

\begin{proof}
    The morphism $s'_\Theta$ is a group homomorphism if and only if the two induced morphisms $A\times A\to\Pic(X)$ coincide. These morphisms correspond to two line bundles over $A\times A\times X$. One just needs to check that the quotient of these line bundles is equal to~\eqref{eq:cubish} and use Proposition~\ref{pp:Cartesian}\eqref{pp:Cart2}.
\end{proof}

\begin{proposition}\label{pr:CubishFiberwise} Suppose $A$ is geometrically reduced. Then a line bundle $\Theta$ is cubish if and only if for any point $a\in A$, the line bundle $\mu_a^*\Theta\otimes p_a^*\Theta^{-1}$ over $X_{k(a)}$ is squarish with respect to the action
of the $k(a)$-group scheme $A_{k(a)}$. Here $\mu_a$ and $p_a$ are as in Proposition~\ref{pr:squarish reduced}.
\end{proposition}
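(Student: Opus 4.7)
My plan is to compare, for each schematic point $a\in A$, the restriction of the cubish test bundle
\[
L:=(\mu\circ(\mu_A\times\Id_X))^*\Theta\otimes p_3^*\Theta\otimes(\mu\circ p_{23})^*\Theta^{-1}\otimes(\mu\circ p_{13})^*\Theta^{-1}
\]
on $A\times A\times X$ to the slice $\{a\}\times A\times X$, and identify it with the squarish test bundle
\[
M_a:=\mu^*\ell_a\otimes p_2^*\ell_a^{-1}
\]
on $A_{k(a)}\times_{k(a)}X_{k(a)}$, where $\ell_a=\mu_a^*\Theta\otimes p_a^*\Theta^{-1}$. A direct pointwise computation at $(b,x)$ shows that both line bundles have fiber
\[
\Theta_{abx}\otimes\Theta_{bx}^{-1}\otimes\Theta_{ax}^{-1}\otimes\Theta_x,
\]
and this matching upgrades to a canonical isomorphism $L|_{\{a\}\times A\times X}\simeq M_a$ read directly off the definitions.

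With this identification in hand, the \emph{only if} direction is immediate: if $\Theta$ is cubish, then $L$ is pulled back from $A\times A$, so its restriction to $\{a\}\times A\times X$ is pulled back from $\{a\}\times A=A_{k(a)}$; since this restriction equals $M_a$, we conclude that $\ell_a$ is squarish for every $a\in A$.

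For the \emph{if} direction I apply the see-saw principle (Corollary~\ref{cor:see-saw}) to $L$ over $A\times A$. First I need $A\times A$ to be reduced: since $A$ is geometrically reduced, $(A\times A)\otimes_k\bar k=(A\otimes\bar k)\times_{\bar k}(A\otimes\bar k)$ is reduced as a product of reduced schemes over the algebraically closed field $\bar k$, and reducedness descends under the faithfully flat base change $(A\times A)\otimes_k\bar k\to A\times A$. It therefore suffices to show that $L$ is trivial on every geometric fiber of $A\times A\times X\to A\times A$. Any geometric point $\bar\sigma=(\bar\sigma_1,\bar\sigma_2)\colon\spec\Omega\to A\times A$ decomposes as follows: $\bar\sigma_1$ has image some schematic point $a\in A$ and endows $\spec\Omega$ with a $k(a)$-structure, and $\bar\sigma_2$ then corresponds to a geometric point $\bar b$ of $A_{k(a)}$. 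Via the identification above, $L|_{\bar\sigma\times X_\Omega}$ is the $\Omega$-pullback of $M_a|_{\{\bar b\}\times X_{k(a)}}$; since $\ell_a$ is squarish, $M_a$ is pulled back from $A_{k(a)}$, and so this fiber is trivial. Corollary~\ref{cor:see-saw} then yields that $L$ is pulled back from $A\times A$, i.e.\ $\Theta$ is cubish.

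The only real subtlety, rather than a deep obstacle, is the bookkeeping in the last step: matching pairs $(a,\bar b)$ consisting of a schematic point $a\in A$ and a geometric point $\bar b$ of $A_{k(a)}$ with all geometric points of $A\times A$, and using \emph{geometric} reducedness of $A$ (not mere reducedness) so that $A\times A$ is a reduced base to which see-saw applies.
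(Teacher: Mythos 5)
Your proof is correct and follows essentially the same route as the paper: identify the restriction of the cubish test bundle over each slice $\{a\}\times A\times X$ with the squarish test bundle for $\mu_a^*\Theta\otimes p_a^*\Theta^{-1}$, then apply the see-saw principle over the reduced base $A\times A$ (reducedness coming from geometric reducedness of $A$). You merely spell out more of the bookkeeping, including the ``only if'' direction that the paper leaves to the reader.
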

\begin{proof}
    Assume that for any point $a\in A$, the line bundle $\mu_a^*\Theta\otimes p_a^*\Theta^{-1}$ over $X_{k(a)}$ is squarish. Then the restriction of the line bundle~\eqref{eq:cubish} to a fiber of $p_1:A\times A\times X\to A$ over any $a\in A$ is isomorphic to the pullback of a line bundle over $A_{k(a)}$. Thus the restriction of~\eqref{eq:cubish} to each fiber of the projection $A\times A\times X\to A\times A$ is trivial. Now it follows from Corollary~\ref{cor:see-saw} that $\Theta$ is cubish (note that $A\times A$ is reduced, since $A$ is geometrically reduced). The converse statement is left to the reader.
\end{proof}

Fix a cubish line bundle $\Theta$ over $X$. We call the line bundle
\[
    P_\Theta:=\mu^*\Theta\otimes p_2^*\Theta^{-1}
\]
over $A\times X$ the \emph{Poincar\'e line bundle} (corresponding to $\Theta$ and the action of $A$ on $X$).
More generally, we can fix a squarish line bundle $\theta$ over $X$ and consider
the \emph{shifted Poincar\'e line bundle}
\[
    P_{\Theta,\theta}:=P_\Theta\otimes p_2^*\theta.
\]
Consider the pushforward
\[
    Rp_{1,*}P_{\Theta,\theta}\in D^b_{coh}(A).
\]
Its support is a closed subset
\[
    \supp(Rp_{1,*}P_{\Theta,\theta})\subset A^{top}.
\]
Our goal is to estimate its codimension from below.

Let us now fix a rational point $x\in X(k)$ (assumed to exist), and consider the morphism $\mu_x:A\to X$. The pullback $\mu_x^*\Theta$ is a line bundle over $A$. To it, we can associate a quasi-subgroup $K_{\mu_x^*\Theta}\subset A^{top}$ defined in~\eqref{eq:shifts}. We claim that $\supp(Rp_{1,*}P_{\Theta,\theta})$ is contained in one coset of $K_{\mu_x^*\Theta}$, in the following sense:

\begin{proposition}\label{pp:suppinT}
For any point $x\in X(k)$,
\[
    \nu_A(\supp(Rp_{1,*}P_{\Theta,\theta})\times\supp(Rp_{1,*}P_{\Theta,\theta}))\subset K_{\mu_x^*\Theta},
\]
where $\nu_A$ is the division morphism.
\end{proposition}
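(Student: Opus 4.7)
The plan is to fix a schematic point $z\in(A\times A)^{top}$ whose two projections $a_1,a_2$ both lie in $S:=\supp(Rp_{1,*}P_{\Theta,\theta})$, and to show $\nu_A(z)=a_1 a_2^{-1}\in K_{\mu_x^*\Theta}$. First I reduce to the case where $a_1,a_2$ are $k$-rational: Corollary~\ref{co:finite order base change} says the finite-order locus $K_{\mu_x^*\Theta}$ pulls back correctly under base change, so membership of the schematic point $\nu_A(z)$ in $K_{\mu_x^*\Theta}$ is equivalent, via the flat extension $k\hookrightarrow k(z)$, to membership of its rationalization in the corresponding finite-order locus upstairs. Flat base change applied to the projective morphism $p_1$ makes $S$ behave analogously (since cohomology and base change commute for proper morphisms), so without loss of generality $a_1,a_2\in A(k)$.

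With $a_1,a_2$ rational, proper base change identifies the fiber of $Rp_{1,*}P_{\Theta,\theta}$ at $a_i$ with $R\Gamma(X,\ell_i)$, where $\ell_i:=\mu_{a_i}^*\Theta\otimes\Theta^{-1}\otimes\theta$; this cohomology is nonzero since $a_i\in S$. Proposition~\ref{pr:CubishFiberwise} applied to the cubish bundle $\Theta$ shows that $\mu_{a_i}^*\Theta\otimes\Theta^{-1}$ is squarish, and tensoring with the squarish line bundle $\theta$ keeps $\ell_i$ squarish. Proposition~\ref{pr:nonvanishcoh}, applied with the $k$-rational point $x$, then yields integers $d_i>0$ with $\mu_x^*\ell_i^{\otimes d_i}\simeq\cO_A$. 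Writing $\Phi:=\mu_x^*\Theta$ and using associativity of the action in the form $\mu_{a_i}\circ\mu_x=\mu_x\circ m_{a_i}$, where $m_{a_i}$ denotes translation by $a_i$ on $A$, this rewrites as
\[
m_{a_i}^*\Phi^{\otimes d_i}\simeq\Phi^{\otimes d_i}\otimes(\mu_x^*\theta)^{-d_i}.
\]

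Setting $d:=d_1 d_2$ and dividing the two resulting identities, the $\theta$-contributions cancel and we obtain $m_{a_1}^*\Phi^{\otimes d}\simeq m_{a_2}^*\Phi^{\otimes d}$. Pulling back along $m_{a_2^{-1}}$ produces $m_{a_1 a_2^{-1}}^*\Phi^{\otimes d}\simeq\Phi^{\otimes d}$, which is precisely the defining condition for $\nu_A(z)=a_1 a_2^{-1}$ to lie in $K_{\mu_x^*\Theta}$. The main bookkeeping obstacle I expect is the base change reduction: one must verify that both $\supp(Rp_{1,*}P_{\Theta,\theta})$ (via proper and flat base change) and $K_{\mu_x^*\Theta}$ (via Corollary~\ref{co:finite order base change} and Lemma~\ref{lm:finite order extension}) are correctly compatible with the extension $k\hookrightarrow k(z)$ and with the quasi-constructible formalism of Section~\ref{sect:quasi-constr}. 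After that, the argument is a short manipulation combining Propositions~\ref{pr:CubishFiberwise} and~\ref{pr:nonvanishcoh}.
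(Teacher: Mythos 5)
Your argument is correct and follows the paper's proof essentially step for step: reduction to rational points via Corollary~\ref{co:finite order base change} and flat base change, squarishness of $P_{\Theta,\theta}|_{a\times X}$ from Proposition~\ref{pr:CubishFiberwise}, finite order of $\mu_x^*\ell_i$ from Proposition~\ref{pr:nonvanishcoh}, and cancellation of the $\theta$-contributions. Your explicit computation with $m_{a_i}^*\Phi^{\otimes d}$ is precisely the ``direct calculation'' that the paper leaves implicit in its last line.
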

\begin{proof}
By Corollary~\ref{co:finite order base change}, we may assume that $k$ is algebraically closed. Next, we may assume that $A$ is reduced, and thus geometrically reduced.

Consider the following line bundles over $A\times A$:
\begin{align*}
P_{\Theta,x}&=(\Id_A\times\mu_x)^*P_\Theta\\
P_{\Theta,\theta,x}&=(\Id_A\times\mu_x)^*P_{\Theta,\theta}\simeq P_{\Theta,x}\otimes(\mu_x\circ p_2)^*\theta,
\end{align*}
and let
\begin{align*}
K_{\Theta,x}&=\{a\in A: \exists\; d>0 \text{ such that }P_{\Theta,x}^{\otimes d}|_{a\times A}\simeq\cO_{a\times A}\}\subset A^{top}\\
K_{\Theta,\theta,x}&=\{a\in A: \exists\; d>0 \text{ such that }P_{\Theta,\theta,x}^{\otimes d}|_{a\times A}\simeq\cO_{a\times A}\}\subset A^{top}
\end{align*}
be their finite order loci (note that $K_{\Theta,x}=K_{\mu_x^*\Theta}$). (One can check that if $x$ changes, $P_{\Theta,x}$ and $P_{\Theta,\theta,x}$ get multiplied by the pullback
of a line bundle under $p_1:A\times A\to A$, so in fact $K_{\Theta,x}$ and $K_{\Theta,\theta,x}$ do not depend on $x$.)

Next, it follows from Proposition~\ref{pr:CubishFiberwise} and the obvious fact that a product of squarish line bundles is squarish, that for all $a\in A$ the line bundle $P_{\Theta,\theta}|_{a\times X}$ is squarish. Now by Proposition~\ref{pr:nonvanishcoh} we see that if $H^\bullet(X,P_{\Theta,\theta}|_{a\times X})\ne0$, then $a\in K_{\Theta,\theta,x}$. Using the base change, we get
\[
    \supp(Rp_{1,*}P_{\Theta,\theta})\subset K_{\Theta,\theta,x}.
\]
Finally, a direct calculation, using that $\theta$ is squarish, shows that
\[
    \nu_A\left(\left(K_{\Theta,\theta,x}\times K_{\Theta,\theta,x}\right)^{qc}\right)\subset K_{\Theta,x}=K_{\mu_x^*\Theta}.
\]
\end{proof}

\begin{corollary}\label{cor:support}
Let $y$ be a $k$-rational point of $\supp(Rp_{1,*}P_{\Theta,\theta})$ (assumed to exist). Then $\supp(Rp_{1,*}P_{\Theta,\theta})$ is a subset of the shift of $K_{\mu_x^*\Theta}$ by $y$.
\end{corollary}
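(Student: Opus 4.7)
The plan is to deduce the corollary as a direct specialization of Proposition~\ref{pp:suppinT}, using only that $y$ is $k$-rational so that right translation by $y$ is a well-defined automorphism of $A^{top}$. Set $K := K_{\mu_x^*\Theta}$ and $S := \supp(Rp_{1,*}P_{\Theta,\theta})$; the goal is to show $S \subseteq K \cdot y$ as subsets of $A^{top}$.

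First, I would restate Proposition~\ref{pp:suppinT} in the form used in the definition of quasi-subgroup, namely: for every schematic point $w \in A \times A$ whose two projections both lie in $S$, we have $\nu_A(w) \in K$.

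Next, fix an arbitrary schematic point $z \in S$. Because $y \in A(k)$ is $k$-rational, the pair $(z, y)$ gives a canonical morphism $\spec k(z) \to A \times A$, and hence a well-defined schematic point of $A \times A$ whose first and second projections are $z$ and $y$ respectively. Both projections lie in $S$ by hypothesis, so the reformulation of Proposition~\ref{pp:suppinT} yields $\nu_A((z,y)) \in K$. Since $\nu_A((z,y)) = z \cdot y^{-1}$ as a schematic point of $A$, under the right-translation automorphism of $A^{top}$ determined by the $k$-rational element $y$, this rewrites as $z \in K \cdot y$, as required.

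I do not expect any genuine obstacle here: the substantive content has already been carried out in Proposition~\ref{pp:suppinT} (the reduction to an algebraically closed base field, the cohomology vanishing of Proposition~\ref{pr:nonvanishcoh} applied fiberwise, and the calculation comparing $K_{\Theta,\theta,x}$ with $K_{\mu_x^*\Theta}$ using that $\theta$ is squarish). The only point worth handling with care is that $k$-rationality of $y$ is precisely what lets us form the schematic point $(z, y) \in A \times A$ for an arbitrary, possibly non-rational $z \in S$, and it is also what makes $K \cdot y$ an unambiguously defined subset of $A^{top}$.
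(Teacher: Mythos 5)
Your argument is correct and is exactly the intended deduction: the paper states Corollary~\ref{cor:support} without proof as an immediate consequence of Proposition~\ref{pp:suppinT}, and your use of the $k$-rationality of $y\in\supp(Rp_{1,*}P_{\Theta,\theta})$ to form the schematic point $(z,y)$ and conclude $z\cdot y^{-1}\in K_{\mu_x^*\Theta}$ is precisely the implicit step. Nothing is missing.
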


\subsection{}\label{sect:stab}
Let $\delta(A)$ be the dimension of the maximal connected reduced affine subgroup scheme in $\spec\bar k\times A$, where $\bar k$ is an algebraic closure of $k$. Note that in Section~\ref{MainResults} we gave a slightly different definition of $\delta(A)$ under the additional assumption that the characteristic of $k$ is zero. According to the Chevalley theorem, the definitions agree.

If $A$ is commutative, then under certain mild assumptions, we can equivalently compute $\delta$ as the maximal dimension of stabilizers. For a schematic point $x\in X$, we denote the stabilizer of~$x$ by $\St_x\subset A_x:=  x\times A$.

\begin{lemma} \label{lm:allornone}
Either $\St_x$ is affine for all points $x\in X$ or it fails to be affine for all points $x\in X$.
\end{lemma}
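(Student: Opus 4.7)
My plan is to prove the equivalent contrapositive: if $\St_{x_0}$ fails to be affine at one $x_0\in X$, then $\St_y$ fails to be affine at every $y\in X$. The strategy is to extract a positive-dimensional abelian subvariety $E_0\subset A$ contained in $\St_{x_0}$ and then invoke the rigidity lemma to propagate the fixed-point property from $x_0$ to all of $X$.

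First, I would base-change to an algebraic closure of $k$ containing $k(x_0)$ and replace $A$ by its identity component; affineness of stabilizers is insensitive to both operations. Since $A$ is commutative and $k$ has characteristic zero, the Chevalley--Rosenlicht theorem gives a direct product decomposition $A\simeq A_{\mathrm{aff}}\times E$, where $A_{\mathrm{aff}}$ is the maximal connected affine subgroup and $E$ is an abelian variety; let $\pi:A\to E$ denote the projection. A closed subgroup $H\subset A$ is then affine if and only if $\pi(H)^0$ is trivial.

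Assume $\St_{x_0}$ is not affine, so that $E_0:=\pi(\St_{x_0})^0\subset E$ is a positive-dimensional abelian subvariety. For any $e\in E_0$ one has $e\in\pi(\St_{x_0})$, so there exists $a\in A_{\mathrm{aff}}$ with $(a,e)\in\St_{x_0}$, which translates into $e\cdot x_0=a^{-1}\cdot x_0\in A_{\mathrm{aff}}\cdot x_0$. The orbit $A_{\mathrm{aff}}\cdot x_0$ is isomorphic (with its reduced locally closed structure) to $A_{\mathrm{aff}}/(A_{\mathrm{aff}}\cap\St_{x_0})$, the quotient of a commutative affine group by a closed subgroup, and hence is an affine variety. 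The resulting morphism $E_0\to A_{\mathrm{aff}}\cdot x_0$ sending $e$ to $e\cdot x_0$ is then a morphism from an abelian variety to an affine variety, so it is constant; comparing with $0\cdot x_0=x_0$ yields $E_0\subset\St_{x_0}$.

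Finally, I apply the rigidity lemma to the action morphism $\mu:E_0\times X\to X$: since $E_0$ is complete with $H^0(E_0,\cO_{E_0})=k$, $X$ is connected, and $\mu(E_0\times\{x_0\})=\{x_0\}$, the morphism $\mu$ factors through the second projection, so comparison with $\mu(0,\cdot)=\Id_X$ forces $\mu(e,x)=x$ for all $(e,x)$. Hence $E_0\subset\St_y$ for every $y\in X$, so $\St_y$ is not affine. I expect the main obstacle to be the middle step: Chevalley--Rosenlicht directly supplies only an abelian \emph{quotient} of $\St_{x_0}$, and upgrading this to an abelian \emph{subvariety} of $A$ sitting inside $\St_{x_0}$ requires both the commutativity of $A$ (so that the putative abelian subvariety $E_0\subset E\subset A$ preserves the $A_{\mathrm{aff}}$-orbit) and the affineness of that orbit, which together force the $E_0$-orbit of $x_0$ to collapse to a point.
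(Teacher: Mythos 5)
Your strategy---extract a positive\-/dimensional abelian subvariety of $\St_{x_0}$ and propagate it to every stabilizer by rigidity---breaks at its first substantive step, and the break is not repairable within this approach. The Chevalley sequence $1\to A_{\mathrm{aff}}\to A\to E\to 1$ does \emph{not} split as a direct product for a general connected commutative group in characteristic zero: non\-/isotrivial semiabelian varieties (nonsplit extensions of an abelian variety by $\gm$) and the universal vector extension of an elliptic curve are standard counterexamples. (The Rosenlicht decomposition $A=A_{\mathrm{ant}}\cdot A_{\mathrm{aff}}$ expresses $A$ as generated by its anti\-/affine and affine parts, not as their product.) Worse, the conclusion you want to extract is itself false: a non\-/affine connected commutative group need not contain any positive\-/dimensional abelian subvariety. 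For instance, if $G$ is the universal vector extension of an elliptic curve and $B\subset G$ were a one\-/dimensional abelian subvariety, then $B\cap\mathbb{G}_{\mathrm a}$ would be finite, hence trivial, and $B$ would split the extension---a contradiction. So if such a $G$ occurs as $\St_{x_0}$ (e.g.\ if $x_0$ is a fixed point of $A=G$), there is no complete subgroup $E_0\subset\St_{x_0}$ to feed into the rigidity lemma, and your argument cannot start. A secondary issue: the lemma is stated for an arbitrary group scheme $A$ of finite type over an arbitrary field; neither commutativity nor characteristic zero is among its hypotheses, and both are essential to your plan. (The rigidity step itself, granted a complete connected $E_0\subset\St_{x_0}$, would be fine.)

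The paper argues in the opposite direction: rather than enlarging the non\-/affine part of one stabilizer, it isolates the common core of all of them. Let $A'\subset A$ be the kernel of the action, so $A'\subset\St_x$ for every $x$. If $A'$ is not affine, no stabilizer is affine. If $A'$ is affine, then for each $x$ the representation of $\St_x$ on the jet spaces $\cO_{X,x}/\fm_{X,x}^N$ has kernel exactly $A'$ for $N\gg0$ (this uses that one may assume $X$ integral), so $\St_x/A'$ is linear; being an extension of an affine group by an affine group, $\St_x$ is affine. The jet\-/space linearization is the ingredient your proposal is missing, and it requires none of the structure theory of commutative groups.
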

\begin{proof} Using extensions of the ground field $k$, we see that it suffices to verify the claim for points in $X(k)$. Also,
we may assume that $\bar k=k$. Finally, we may assume that $X$ is integral.

Denote by $A'\subset A$ the kernel of the action of $A$ on $X$. Then $\St_x\supset A'$ for all $x\in X(k)$. In particular, if $A'$ is
not affine, then $\St_x$ fails to be affine for all $x\in X(k)$.

Suppose now that $A'$ is affine. Let $\cO_{X,x}$ be the local ring of $x\in X$, and let $\fm_{X,x}\subset\cO_{X,x}$ be the maximal ideal. The group $\St_x$ acts on $\cO_{X,x}$. The kernel of the induced representation of $\St_x$ on $\cO_{X,x}/\fm_{X,x}^N$ coincides with $A'$ for $N\gg0$. Hence $\St_x$ is affine, as claimed.
\end{proof}

\begin{proposition}\label{pp:delta and st}
Assume that $A$ is commutative and there is a point $x\in X$ such that the stabilizer $\St_x$ is affine. Then $\delta(A)=\max\{\dim(\St_x):x\in X\}$; moreover, it is enough to take the maximum over closed points.
\end{proposition}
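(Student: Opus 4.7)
The plan is to prove the proposition by establishing both inequalities $\max_{x\in X}\dim(\St_x)\le\delta(A)$ and $\delta(A)\le\max_{x\in X}\dim(\St_x)$, arranging the argument so that the maximum is visibly attained at a closed point. The hypothesis that some stabilizer is affine, combined with Lemma~\ref{lm:allornone}, implies that $\St_x$ is affine for \emph{every} schematic point $x\in X$. Since both $\delta(A)$ and all $\dim(\St_x)$ are preserved by extension of scalars to $\bar k$, I will work throughout over an algebraic closure; let $H\subseteq A$ denote the maximal connected reduced affine subgroup, so $\dim H=\delta(A)$, and observe that commutativity of $A$ makes $H$ the \emph{unique} maximal connected affine subgroup.

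For the upper bound the argument is essentially formal: for any $x\in X$ the identity component $\St_x^0$ is a connected affine subgroup of $A$, hence contained in $H$ by maximality. This gives $\dim(\St_x)=\dim(\St_x^0)\le\dim H=\delta(A)$.

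For the reverse inequality the key input will be Borel's fixed point theorem. Commutativity of $A$ makes $H$ itself a commutative connected linear algebraic group, which is trivially solvable (its commutator subgroup is trivial). Applying Borel's theorem to the action of $H$ on the projective variety $X$ produces a fixed point $\bar x\in X(\bar k)$; since the morphism $X_{\bar k}\to X$ is integral, the image $x\in X$ of $\bar x$ is a closed point, and stabilizer dimension is preserved under this base change. The stabilizer at $x$ contains $H$, whence $\dim(\St_x)\ge\dim H=\delta(A)$. Combined with the upper bound, this yields equality $\dim(\St_x)=\delta(A)$ at a closed point, simultaneously proving the claimed equality and the refinement that it suffices to take the maximum over closed points.

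I do not anticipate a serious technical obstacle: affineness of every $\St_x$ has already been packaged into Lemma~\ref{lm:allornone}, and the only substantive ingredient is Borel's fixed point theorem, whose solvability hypothesis is automatic because commutativity of $A$ forces the commutator subgroup of $H$ to vanish. The one small care point is keeping track of base change to $\bar k$ and the passage back to a closed point of $X$, but this is handled by the integrality of $X_{\bar k}\to X$.
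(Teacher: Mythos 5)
Your argument is correct and follows essentially the same route as the paper: the easy inequality $\max\dim(\St_x)\le\delta(A)$ via Lemma~\ref{lm:allornone} and the maximality in Chevalley's theorem, and the reverse inequality by applying Borel's fixed point theorem to the maximal connected affine subgroup of $A_{\bar k}$ acting on the projective scheme $X_{\bar k}$, then descending the fixed point to a closed point of $X$. Your explicit remark that commutativity supplies the solvability hypothesis for Borel's theorem is a point the paper leaves implicit, but the proof is otherwise the same.
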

\begin{proof}
By Lemma~\ref{lm:allornone}, the inequality $\delta(A)\ge\max\{\dim(\St_x):x\in X\}$ follows from definitions. Conversely, let
\[A^{aff}_{\bar k}\subset A_{\bar k}=\spec \bar k\times A\]
be the maximal affine reduced connected group subscheme; its existence is guaranteed by the Chevalley Theorem. The affine scheme
$A^{aff}_{\bar k}$ acts on the projective scheme $X_{\bar k}=\spec\bar k\times X$, and therefore by the Borel Theorem~\cite[Thm.~8.4]{BorelLinAlgGrps} the action has
a fixed point $\bar x\in X(\bar k)$. Let the closed point $x$ be the image of $\bar x:\spec\bar k\to X$, then
\[
    \dim(\St_x)=\dim(\St_{\bar x})\ge\dim A^{aff}_{\bar k}=\delta(A).
\]
\end{proof}

\begin{proposition}\label{pr:mainfiberwise}
Assume that there is a closed point $x\in X$ whose stabilizer $\St_x$ is finite.
Let $\Theta$ be an ample cubish line bundle over $X$; let $\theta$ be a squarish line bundle over $X$. Then
\[
    \dim\supp Rp_{1,*}P_{\Theta,\theta}\le\delta(A).
\]
\end{proposition}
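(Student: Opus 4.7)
The plan is to combine Corollary~\ref{cor:support} with the affineness criterion of Proposition~\ref{pr:affine}, applied to a suitable subgroup of $A$. First I would reduce to the case that $k$ is algebraically closed and $\supp Rp_{1,*}P_{\Theta,\theta}$ is non-empty: since $p_1:A\times X\to A$ is projective, $Rp_{1,*}P_{\Theta,\theta}$ is compatible with arbitrary base change, so after replacing $k$ by $\bar k$ we may pick a $k$-rational point $y$ of the support. By Corollary~\ref{cor:support}, $\supp Rp_{1,*}P_{\Theta,\theta}\subset y\cdot K_{\mu_x^*\Theta}$, and Proposition~\ref{pp:quasigroups} identifies $K_{\mu_x^*\Theta}$ with a union of cosets of its neutral component $H:=K^0_{\mu_x^*\Theta}$, so
\[
    \dim\supp Rp_{1,*}P_{\Theta,\theta}\le\dim H.
\]
Since by definition $\delta(A)$ bounds the dimension of any connected reduced affine subgroup of $A$, it now suffices to show that $H$ is affine.

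To prove this I would apply Proposition~\ref{pr:affine} to the pair $(H,\mu_x^*\Theta|_H)$. The condition $K_{\mu_x^*\Theta|_H}=H^{top}$ follows at once from the inclusion $H\subset K_{\mu_x^*\Theta}$: for any $y\in H$, the defining isomorphism $\mu_y^*(\mu_x^*\Theta)^{\otimes d}\simeq p_y^*(\mu_x^*\Theta)^{\otimes d}$ on $A_{k(y)}$ restricts along the closed immersion of subgroup schemes $H_{k(y)}\hookrightarrow A_{k(y)}$ to the analogous isomorphism on $H_{k(y)}$, so $y\in K_{\mu_x^*\Theta|_H}$.

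The substantive condition is ampleness of $\mu_x^*\Theta|_H$ on $H$. For this, consider the orbit $O:=H\cdot x\subset X$. It is a locally closed subscheme of $X$ (standard for actions of finite-type group schemes), canonically identified with the quotient $H/\St_x^H$, where $\St_x^H:=\St_x\cap H$ is a finite group scheme (being a closed subscheme of the finite $\St_x$). Consequently the orbit map $H\to O$ is finite and faithfully flat; since $\Theta|_O$ is ample on the quasi-projective $O$, the pullback $\mu_x^*\Theta|_H=(H\to O)^*(\Theta|_O)$ is ample on $H$. Proposition~\ref{pr:affine} then yields that $H$ is affine, finishing the proof. The key step is the ampleness verification: it rests on the finiteness of $\St_x^H$ to ensure the orbit map $H\to O$ is actually finite (not merely quasi-finite), so that pullback preserves ampleness.
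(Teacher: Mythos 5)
Your proof is correct and follows essentially the same route as the paper's: bound the support by a coset of $K^0_{\mu_x^*\Theta}$ via Corollary~\ref{cor:support} and Proposition~\ref{pp:quasigroups}, then deduce affineness of that subgroup from Proposition~\ref{pr:affine}, using ampleness of $\mu_x^*\Theta$ which comes from finiteness of the orbit map forced by the finite stabilizer. The only (immaterial) difference is that the paper proves finiteness of the full orbit map $A\to A\cdot x$ by upgrading quasi-finiteness via $A$-equivariance and then invokes Corollary~\ref{cor:ampleshiftsaffine}, whereas you restrict to $H=K^0_{\mu_x^*\Theta}$ first and use the $\St_x\cap H$-torsor structure of $H\to H\cdot x$.
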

\begin{proof}
We may assume that $k$ is algebraically closed. Let $x\in X(k)$ be a point with finite stabilizer. By Corollary~\ref{cor:support}
$\supp Rp_{1,*}P_{\Theta,\theta}$ is contained in a coset of $K_{\mu_x^*\Theta}$. By Proposition~\ref{pp:quasigroups} each component of $K_{\mu_x^*\Theta}$ is a coset of an algebraic subgroup $K^0_{\mu_x^*\Theta}$, and we see that
\[
    \dim\supp Rp_{1,*}P_{\Theta,\theta}\le\dim K^0_{\mu_x^*\Theta}.
\]
Thus we only need to show that $K^0_{\mu_x^*\Theta}$ is affine. By Corollary~\ref{cor:fiberestimate} this will follow if we show that
$\mu_x^*\Theta$ is an ample line bundle over $A$.

Now we prove that $\mu_x^*\Theta$ is ample. Note that the image $\mu_x(A)\subset X$ is the $A$-orbit of $x$; therefore the image is locally closed. Moreover, when viewed as a morphism onto its image,
\[
    \mu_x:A\to\mu_x(A)
\]
is quasi-finite, and therefore generically finite. Using the $A$-equivariance of this morphism, we see that it is in fact finite.
Hence $\mu_x^*\Theta$ is ample over $A$, as the pullback of an ample line bundle under a finite morphism (see e.g.~\cite[Prop.~5.1.12]{EGAII}).
\end{proof}

\section{Direct image of the Poincar\'e line bundle}
In this section, we discuss families of squarish and cubish line bundles, and define shifted Poincar\'e line bundles for
degenerate abelian schemes. Our main result is Proposition~\ref{pr:dirimpoinc}.

\subsection{Squarish line bundles in families}
Let us now consider a relative version of the framework from the previous section. Let $k$ be a field, let $B$ be a base scheme
that we suppose to be locally of finite type over $k$. Let $\pi_X:X\to B$ be a flat projective morphism whose geometric fibers
are non-empty, reduced, and connected. Let $\pi_A:A\to B$ be a group scheme of finite type over $B$. Finally, let
\[
    \mu:A\times_BX\to X
\]
be an action of $A$ on $X$.

\begin{definition} Let $\ell$ be a line bundle over $X$. We say that $\ell$ is \emph{(relatively) squarish} if the line bundle
\[\ell':=\mu^*\ell\otimes p_2^*\ell^{-1}\]
over $A\times_B X$ is isomorphic to the pullback of a line bundle over $A$.

We say that $\ell$ is \emph{(relatively) cubish} if the line bundle $\ell'$ is squarish with respect to the action of the $A$-group scheme $A\times_BA$ on $A\times_B X$.
\end{definition}

Unwinding the definition of the relatively cubish line bundle, we see that if $B$ is the spectrum of a field, it is equivalent to Definition~\ref{def:cubish}.

\begin{remark} The line bundle $\ell$ yields a section $s_\ell:B\to\Pic(X/B)$ of the relative Picard space. The action of $A$ on $X$
induces an action
\[\mu_{\Pic}:A\times_B\Pic(X/B)\to\Pic(X/B).\]
Similarly to Proposition~\ref{pr:SquarishFixed}, $\ell$ is squarish if and only if $s_\ell$ is invariant under this action; that is, the morphism
\[
    \mu_{\Pic}\circ(\Id_A\times(s_\ell\circ\pi_A)):A\to\Pic(X/B)
\]
is `fiberwise constant'. Similarly, $\ell$ is cubish if and only if
the morphism is `fiberwise affine linear'.
\end{remark}

\begin{proposition}\label{pr:sq}
\stepzero\noindstep\label{sq:a} Suppose $B$ is reduced. A line bundle $\ell$ is relatively squarish (resp. cubish) if and only if the restriction $\ell_b$ of $\ell$ to ${X_b}$ is squarish (resp. cubish) for every schematic point $b\in B$.

\noindstep\label{sq:b} Suppose that $A$ is flat over $B$, and that one of the following two conditions is satisfied:
either $\ell$ is fiberwise numerically trivial, or the fibers of $\pi_A$ are connected.

Under these assumptions, the set of points $b\in B$ such that $\ell_b$ is squarish (resp.\ cubish) is specialization-closed. Equivalently,
the set is a possibly infinite union of closed sets.
\end{proposition}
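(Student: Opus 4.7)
The plan is to encode squarishness via a single morphism. The line bundle $\ell' := \mu^*\ell \otimes p_2^*\ell^{-1}$ on $A \times_B X$, regarded as an $A$-family of line bundles on the fibers of $\pi_X$, determines a morphism $\sigma: A \to \Pic(X/B)$. By Proposition~\ref{pp:Cartesian}\eqref{pp:Cart2}, relative squarishness of $\ell$ is equivalent to $\sigma$ factoring through the zero section $\iota'$, while fiberwise squarishness of $\ell_b$ says exactly that $\sigma|_{A_b}: A_b \to \Pic(X_b)$ is the zero morphism. The cubish case will reduce to squarish throughout via the observation that $\ell$ is cubish over $B$ iff $\ell'$ is relatively squarish over $A$ for the action of $A\times_B A$ on $A \times_B X$, with an analogous match on fibers.

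For part \eqref{sq:a}, the direction squarish $\Rightarrow$ fiberwise squarish is immediate by base change. For the converse, I would first use fiberwise squarishness to see that every restriction of $\ell'$ to a fiber of $p_1: A \times_B X \to A$ is trivial, so $\sigma$ lands in $\Pic^\tau(X/B)$ pointwise; openness of $\Pic^\tau$ in $\Pic$ then upgrades this to a scheme-theoretic factorization $\sigma: A \to \Pic^\tau(X/B)$. Since $\iota': B \to \Pic^\tau(X/B)$ is a closed embedding by Proposition~\ref{pr:zerosection}, the preimage $Z := \sigma^{-1}(\iota'(B))$ is a closed subscheme of $A$ whose underlying set equals $A$. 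Reducedness of $B$, together with reducedness of the fibers $A_b$ which is automatic in characteristic zero, then yields $Z = A$ as subschemes, hence squarishness of $\ell$. The main obstacle is precisely this last scheme-theoretic upgrade: it requires $A$ itself to be reduced, which needs $\pi_A$ to be sufficiently well-behaved (flat with reduced fibers), as happens automatically in the degenerate-abelian-scheme applications.

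For part \eqref{sq:b}, I expect the stronger statement that the squarish locus is actually closed in $B$. Under either hypothesis, $\sigma$ factors through $\Pic^\tau(X/B)$: if $\ell$ is fiberwise numerically trivial, then each restriction of $\ell'$ to a fiber of $p_1$ is a difference of numerically trivial line bundles; if instead $\pi_A$ has connected fibers, then $\sigma(A_b)$ is a connected subset of $\Pic(X_b)$ containing $\sigma(1_b) = 0$ and therefore lies in $\Pic^0(X_b) \subset \Pic^\tau(X_b)$. Consequently $Z := \sigma^{-1}(\iota'(B))$ is closed in $A$, the squarish locus is $B \setminus \pi_A(A \setminus Z)$, and flatness plus finite type of $\pi_A$ make $\pi_A$ an open map, so the squarish locus is closed. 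The cubish case follows by applying the squarish version to $\ell'$ over $A$ under the action of the flat group scheme $A \times_B A \to A$, whose fibers inherit connectedness and for which $\ell'$ inherits fiberwise numerical triviality.
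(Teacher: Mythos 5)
Your overall strategy is the paper's: encode relative squarishness as the condition that the section $s'_\ell\colon A\to\Pic(A\times_BX/A)=\Pic(X/B)\times_BA$ determined by $\ell'=\mu^*\ell\otimes p_2^*\ell^{-1}$ coincides with the zero section, invoke Proposition~\ref{pp:Cartesian}\eqref{pp:Cart2} together with Proposition~\ref{pr:zerosection}, and reduce the cubish case to the squarish one for the group scheme $A\times_BA$ over $A$. For part (i) your argument matches the paper's, and the reservation you raise — that passing from $Z^{top}=A^{top}$ to $Z=A$ needs $A$ itself to be reduced — is a fair one; the paper's one-line justification (``can be verified on fibers provided $B$ is reduced'') glosses over the same point. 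In every application $A$ is smooth over $B$, so this is harmless, but you are right that it is not formally implied by ``$B$ reduced'' alone.

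The genuine problem is in part (ii). You identify the squarish locus with $B\setminus\pi_A(A\setminus Z)$ and conclude that it is closed. But $b\notin\pi_A(A\setminus Z)$ only says that $A_b$ is contained in $Z$ \emph{set-theoretically}, whereas ``$\ell_b$ is squarish'' is equivalent, via Proposition~\ref{pp:Cartesian}\eqref{pp:Cart2} applied on the fiber, to the \emph{scheme-theoretic} containment $\pi_A^{-1}(b)\subset Z$. The two conditions agree only when the fibers of $\pi_A$ are geometrically reduced — automatic in characteristic zero by Cartier's theorem, but not assumed in this section, and this is exactly the extra hypothesis under which the remark following the proposition upgrades ``specialization-closed'' to ``closed.'' The paper instead works with the scheme-theoretic condition directly and derives specialization-closedness from flatness of $\pi_A$: restricting to an integral closed subscheme $T=\overline{\{b\}}\subset B$, the scheme $A_T$ is flat over $T$, so all of its associated points lie over the generic point of $T$; hence the ideal sheaf of $Z\cap A_T$ in $A_T$ vanishes as soon as it vanishes on the generic fiber, and the containment propagates to all specializations of $b$. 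That flatness argument is the ingredient missing from your write-up; the rest — the factorization of $s'_\ell$ through $\Pic^\tau(A\times_BX/A)$ under either hypothesis, the closedness of the zero section there, and the transfer to the cubish case via $A\times_BA$ — is exactly what the paper does.
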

\begin{proof}
Let us prove~\eqref{sq:a}. By definition, $\ell$ is squarish if and only if the line bundle~$\ell'$ over $A\times_B X$
is isomorphic to the pullback of a line bundle over $A$. Let ${s_\ell':A\to\Pic(A\times_BX/A)}$ be the corresponding section of the
relative Picard space; by Proposition~\ref{pp:Cartesian}\eqref{pp:Cart2} $\ell$ is squarish if and only if $s_\ell'$ is the zero section. Since the zero section is a locally
closed embedding by Proposition~\ref{pr:zerosection}, this condition can be verified on fibers of the morphism $A\to B$, provided $B$ is reduced.

The argument for cubish line bundles is similar: $\ell$ is cubish if and only if a line bundle $\ell''$ over $A\times_BA\times_BX$
is isomorphic to the pullback of a line bundle over $A\times_B A$. This condition can be checked on fibers of the projection
$A\times_BA\to B$.

Now let us prove~\eqref{sq:b}. Under our assumptions, the morphism $s_\ell'$ factors through $\Pic^\tau(A\times_BX/A)$ (note that since $\pi_A:A\to B$ is a group scheme, its fibers are connected if and only if they are geometrically connected). By Proposition~\ref{pr:zerosection} the zero section is closed in $\Pic^\tau(A\times_BX/A)$, so we see that the preimage $Z\subset A$ of
the zero section under $s_\ell'$ is a closed subscheme in $A$. Given $b\in B$, we see that $\ell_b$ is squarish if and only if $\pi_A^{-1}(b)$ is contained in $Z$ (scheme theoretically). The set of such $b$ is specialization-closed because $\pi_A$ is flat.

The argument for cubish bundles is similar.
\end{proof}

\begin{remark} In the situation of Proposition~\ref{pr:sq}\eqref{sq:b}, suppose that $\pi_A$ has geometrically reduced fibers.
Then $\ell_b$ is squarish if and only if $b\in B-\pi_A(A-Z)$ (where~$Z$ is defined in the proof of the proposition). Thus, the
locus of $b\in B$ such that~$\ell_b$ is squarish is closed, and not just specialization-closed;
a similar argument works for cubish bundles.
\end{remark}

Finally, note the behavior of the defect $\delta$ in families.

\begin{proposition}\label{pp:delta semicontinuous}
Suppose that $A$ is commutative and that for every point $b\in B$, there is a point $x\in X_b$ in the fiber over $b$ whose stabilizer is affine.
Then the function
\[
    b\mapsto\delta(A_b):B\to\Z
\]
is upper-semicontinuous.
\end{proposition}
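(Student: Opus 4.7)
The plan is to reduce the semicontinuity of $b\mapsto\delta(A_b)$ to the semicontinuity of fiber dimension for a suitable stabilizer scheme, and then push forward along the proper morphism $\pi_X$. The key input is Proposition~\ref{pp:delta and st}, applied fiberwise over $B$: for each $b\in B$, the hypothesis provides a point of $X_b$ with affine stabilizer, so we may compute
\[
    \delta(A_b)=\max\{\dim\St_x:x\in X_b\text{ closed point}\}.
\]
Consequently, it suffices to show that for every integer $k$ the set $\{b\in B:\delta(A_b)\ge k\}$ is closed in $B$.

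To do this, I would construct the relative stabilizer scheme
\[
    \St:=(A\times_B X)\times_{X\times_B X}X,
\]
where the first map is $(\mu,p_2)\colon A\times_B X\to X\times_B X$ and the second map is the diagonal $\Delta_X$. Then $\St\to X$ is a morphism of finite type (inherited from $\pi_A$), and its scheme-theoretic fiber over any schematic point $x\in X$ is precisely the stabilizer $\St_x\subset A_x$ used in Section~\ref{sect:stab}. By Chevalley's upper-semicontinuity theorem for fiber dimension applied to $\St\to X$, the set
\[
    X_{\ge k}:=\{x\in X:\dim\St_x\ge k\}
\]
is closed in $X$.

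Combining the two steps, a point $b\in B$ satisfies $\delta(A_b)\ge k$ if and only if there exists $x\in X_b$ with $\dim\St_x\ge k$, i.e.\ if and only if $b\in\pi_X(X_{\ge k})$. Because $\pi_X\colon X\to B$ is projective, hence proper, the image $\pi_X(X_{\ge k})$ is closed in $B$. Thus $\{b\in B:\delta(A_b)\ge k\}$ is closed, proving the desired upper-semicontinuity.

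There is no real obstacle here: the only mild point to keep in mind is to verify that the hypotheses of Proposition~\ref{pp:delta and st} are satisfied on each fiber (which is exactly what the assumption of the present proposition supplies, combined with the fact that $A_b$ is commutative since $A$ is), and to confirm that the fibers of the relative stabilizer scheme $\St\to X$ coincide with the individual stabilizers $\St_x$ — which is immediate from the Cartesian description.
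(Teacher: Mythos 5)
Your proof is correct and follows essentially the same route as the paper: reduce via Proposition~\ref{pp:delta and st} to showing that the locus $\{x\in X:\dim\St_x\ge k\}$ is closed, obtain that closedness from upper-semicontinuity of fiber dimension for the morphism $A\times_BX\to X\times_BX$ (the paper cites EGA~IV~13.1.3 directly, you package the same fact via the relative stabilizer scheme), and then take the image under the proper map $\pi_X$.
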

\begin{proof}
Fix $k$, and let us show that the set
\[\{b\in B:\delta(A_b)\ge k\}\subset B\]
is closed.
Put
\[
    Y:=\{x\in X:\dim(\St_x)\ge k\}.
\]
Here $\St_x$ is the stabilizer of $x\in X$ (cf.~Section~\ref{sect:stab}).
Then $Y$ is closed by \cite[Thm.~13.1.3]{EGAIV-3} applied to the natural morphism $A\times_BX\to X\times_BX$. Further,
\[
    \{b\in B:\delta(A_b)\ge k\}=\pi_X(Y)
\]
by Proposition~\ref{pp:delta and st}. Since $\pi_X$ is proper, the proposition follows.
\end{proof}

\subsection{Line bundles over degenerate abelian schemes}
Let us now impose additional restrictions on the collection $(\pi_X:X\to B,\pi_A:A\to B,\mu:A\times_B X\to X)$:
we require it to be a degenerate abelian scheme in the sense of Definition~\ref{def:DAS}. Replacing $B$ by a Zariski cover,
we can find a relatively ample line bundle $\Theta$ over $\pi_X:X\to B$. Let $\theta$ be a fiberwise numerically trivial line bundle over $\pi_X:X\to B$. Define the \emph{shifted Poincar\'e line bundle} over $A\times_BX$ as
\[
    P_{\Theta,\theta}:=\mu^*\Theta\otimes p_2^*\Theta^{-1}\otimes p_2^*\theta
\]

\begin{proposition}\label{pr:dirimpoinc}
Assume that the fibers of $\pi_A$ are connected. Then $Rp_{1,*}P_{\Theta,\theta}$ is supported in cohomological degree $g$.
\end{proposition}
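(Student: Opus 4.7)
The plan is to write $F:=Rp_{1,*}P_{\Theta,\theta}$ and to show (i) that $F$ is a perfect complex on $A$ of amplitude $\subseteq[0,g]$; (ii) that every cohomology sheaf of $F$ has support of codimension at least $g$ in $A$; and (iii) that Grothendieck duality then forces $F$ to be concentrated in the top degree~$g$. Step~(i) is immediate: $p_1\colon A\times_BX\to A$ is the base change of $\pi_X$ along $\pi_A$, hence flat and projective with $g$-dimensional fibers, and $P_{\Theta,\theta}$ is a line bundle.

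For step~(ii) I would work fiberwise and apply Proposition~\ref{pr:mainfiberwise} to the action of $A_b$ on $X_b$ for each $b\in B$. Ampleness of $\Theta|_{X_b}$ is automatic, and the existence of a closed point in $X_b$ with finite stabilizer follows from condition~\eqref{DAS:FiniteStab} of Definition~\ref{def:DAS} (the locus of points with zero-dimensional stabilizer is open and nonempty). To check that $\Theta|_{X_b}$ is cubish and $\theta|_{X_b}$ squarish for \emph{every} $b\in B$, I observe that over the dense open $B_0\subset B$ of condition~\eqref{DAS:gentorsor} the fiber $X_b$ is a torsor over an abelian variety, whence every line bundle is cubish (Example~\ref{ex:ThmCube}) and every numerically trivial one is squarish (Proposition~\ref{pp:abelian variety}); the hypothesis that the fibers of $\pi_A$ are connected then activates Proposition~\ref{pr:sq}\eqref{sq:b}, whose specialization-closedness statement propagates both properties from $B_0$ to all of $B$. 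Proposition~\ref{pr:mainfiberwise} thus gives $\dim\supp(F|_{A_b})\le\delta(A_b)$. Combined with the equivalent reformulation $\delta(A_b)\le\codim\overline{\{b\}}$ of~\eqref{eq:condition}, any irreducible component $Z\subset\supp F$ with generic point $\eta$ and $b:=\pi_A(\eta)$ satisfies
\[
\dim Z\;=\;\dim\overline{\{b\}}+\dim(Z\cap A_b)\;\le\;\bigl(\dim B-\delta(A_b)\bigr)+\delta(A_b)\;=\;\dim B,
\]
so $\codim_A Z\ge g$, as desired.

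For step~(iii), set $F^\vee:=R\HOM(F,\mathcal O_A)$. Since $A$ is smooth over $k$, one has $\mathcal{E}xt^p(G,\mathcal O_A)=0$ for every coherent sheaf $G$ and every $p<\codim\supp G$; feeding step~(ii) into the hyperext spectral sequence $\mathcal{E}xt^p(H^{-q}F,\mathcal O_A)\Rightarrow H^{p+q}(F^\vee)$ therefore forces $H^i(F^\vee)=0$ for $i<0$. For the matching upper bound I invoke relative Grothendieck duality for the flat projective Gorenstein morphism $p_1$: by condition~\eqref{DAS:RelDualLB} the relative dualizing sheaf $\omega_{p_1}=p_2^*\omega_{X/B}$ is the pullback of a line bundle $\omega_B$ on $B$, so the projection formula gives
\[
F^\vee\;\simeq\;Rp_{1,*}\bigl(P_{\Theta,\theta}^{-1}\bigr)\otimes\pi_A^*\omega_B\,[g],
\]
a perfect complex of amplitude $[-g,0]$. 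Combined with the lower bound, $F^\vee$ is a coherent sheaf placed in degree $0$, with $\supp F^\vee=\supp F$ still of codimension $\ge g$. Dualizing once more, $F=F^{\vee\vee}=R\HOM(F^\vee,\mathcal O_A)$, and the same $\mathcal{E}xt$-vanishing forces $H^i(F)=0$ for $i<g$; together with step~(i) this pins $F$ to degree~$g$.

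The main obstacle is the fiberwise verification in step~(ii): Proposition~\ref{pr:mainfiberwise} is an absolute statement requiring $\Theta_b$ cubish and $\theta_b$ squarish for every $b\in B$, yet \emph{a priori} these hold only on the torsor locus $B_0$. Bridging this gap is precisely the role of the hypothesis on connectedness of fibers of $\pi_A$, which feeds Proposition~\ref{pr:sq}\eqref{sq:b} and thereby spreads the generic abelian-variety picture across the degenerate fibers.
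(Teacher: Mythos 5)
Your proof is correct and follows essentially the same route as the paper: propagate cubishness of $\Theta$ and squarishness of $\theta$ from the torsor locus to all of $B$ via Proposition~\ref{pr:sq}\eqref{sq:b}, apply Proposition~\ref{pr:mainfiberwise} fiberwise together with the estimates~\eqref{eq:condition} to get $\codim\supp Rp_{1,*}P_{\Theta,\theta}\ge g$, and conclude. The only divergence is presentational: the paper compresses your step~(iii) into the single sentence ``since $P_{\Theta,\theta}$ is flat over $A$, we see that $R^ip_{1,*}P_{\Theta,\theta}=0$ for $i<g$,'' leaving the acyclicity/duality argument implicit, whereas you spell it out via relative Grothendieck duality --- a computation the paper in fact performs shortly afterwards in the proof of Proposition~\ref{pr:DirectImageStructure}.
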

\begin{proof}
First of all, we claim that every line bundle over $X$ is relatively (and thus by Proposition~\ref{pr:sq}\eqref{sq:a} fiberwise) cubish. Indeed, every line bundle is cubish over the generic fiber because this fiber is a torsor over an abelian variety (this follows from the Theorem of the Cube, see Example~\ref{ex:ThmCube}). It remains to use Proposition~\ref{pr:sq}\eqref{sq:b}. Thus $\Theta$ is relatively (and fiberwise) cubish.

Next, combining Proposition~\ref{pp:abelian variety} and Lemma~\ref{lm:finite order extension}, we see that $\theta$ is squarish over the generic fiber. Thus it is relatively (and fiberwise) squarish. Consider the closed subset $B':=\supp Rp_{1,*}P_{\Theta,\theta}$.

Next, by Definition~\ref{def:DAS}\eqref{DAS:FiniteStab} every fiber $X_b$ of $\pi_X:X\to B$ has a point $x$ with finite stabilizer in $A\times_Bx$. The set of such $x$ is open in $X_b$ (see the proof of Proposition~\ref{pp:delta semicontinuous}), so we may assume that $x$ is a closed point. Thus we can apply Proposition~\ref{pr:mainfiberwise} to the fiber. Combining  this proposition and the estimates~\eqref{eq:condition}, we see that  $\dim B'\le\dim B$ so that $\codim B'\ge g$. Since $P_{\Theta,\theta} $ is flat over $A$, we see that $Rp^i_{1,*}P_{\Theta,\theta}=0$ for $i<g$.
\end{proof}

\begin{remark}
It is easy to see from the proof that $\supp Rp_{1,*}P_{\Theta,\theta}$ is Cohen--Macaulay of pure codimension $g$.
\end{remark}

\section{Proof of Theorem~\ref{th:PartialFourier}}\label{ProofOfMainThm}
We keep the notation of Theorem~\ref{th:PartialFourier}. In particular, $k$ is a field of characteristic zero.
Since the statement of Theorem~\ref{th:PartialFourier} is local over the base $B$, we can assume that $B$ is connected and there exists a relatively ample line bundle over $\pi_X:X\to B$ without losing generality. Let us fix such a line bundle $\Theta$.

Also, by~\cite[Exp.~$\mathrm{VI_B}$, Thm.~3.10]{SGA3-1}, the union of neutral connected components of the fibers of the group scheme $\pi_A:A\to B$ is an open subscheme of $A$. Replacing $A$ by this subscheme, we may assume that the fibers of $\pi_A$ are connected, and thus geometrically connected because $\pi_A:A\to B$ is a group scheme.

Let $P_\Theta:=\mu^*\Theta\otimes p_2^*\Theta^{-1}$ be the Poincar\'e line bundle over $A\times_BX$ given by $\Theta$. It gives rise to a morphism $\phi:A\to\Pic(X/B)$.

\subsection{} The first step in the proof of Theorem~\ref{th:PartialFourier} is the following
\begin{proposition}\label{pr:unramified}
The morphism $\phi:A\to\Pic(X/B)$ is unramified.
\end{proposition}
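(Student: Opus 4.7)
The plan is to reduce the claim to a purely fiberwise, infinitesimal statement, then use the ample Poincaré-type structure together with Condition~(vi) of the DAS definition and the quasi-subgroup technology from Section~\ref{sect:quasi-constr}.

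First, I would observe that $\phi$ is a $B$-morphism of $B$-group algebraic spaces. Indeed, exactly as in the proof of Proposition~\ref{pr:dirimpoinc}, every line bundle on $X$ is relatively cubish: on the generic fiber this follows from the Theorem of the Cube (Example~\ref{ex:ThmCube}), and it extends to all fibers by Proposition~\ref{pr:sq}\eqref{sq:b}. In particular $\Theta$ is cubish, and by the proposition that immediately precedes Proposition~\ref{pr:CubishFiberwise} this is equivalent to $\phi$ being a group homomorphism. Since $\pi_A$ has connected fibers, $\phi$ factors through $\Pic^\tau(X/B)$, whose zero section is closed by Proposition~\ref{pr:zerosection}; hence $K:=\ker(\phi)$ is a closed $B$-subgroup of~$A$.

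The second stage is to reduce unramifiedness to a fiberwise Lie-algebra statement. By translation invariance of $d\phi$ (a group hom), it suffices to check injectivity of the differential at the identity section $e$ over every point $b\in B$, that is, injectivity of
\[
d\phi_{b,e}:\mathfrak a_b\longrightarrow T_0\Pic(X_b/k(b))=H^1(X_b,\cO_{X_b}).
\]
Since the characteristic is zero, $\ker(d\phi_{b,e})=\Lie(K_b^0)$, and the goal becomes: the neutral component $K_b^0\subset A_b$ is trivial for every~$b$.

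Third, I would show that $K_b^0$ is an affine connected subgroup of $A_b$, arguing exactly in the style of the proof of Proposition~\ref{pr:mainfiberwise}. By Condition~\eqref{DAS:FiniteStab} choose $x\in X_b$ with finite $A_b$-stabilizer; the orbital morphism $\mu_x:A_b\to X_b$ is then $A_b$-equivariant and quasi-finite, hence finite onto its locally closed image, so $\mu_x^*\Theta$ is ample on~$A_b$. For $a\in K_b^0$ we have $\mu_a^*\Theta\simeq\Theta$ on $X_b$ (up to a pullback from $k(a)$), which upon applying $\mu_x^*$ gives $L_a^*(\mu_x^*\Theta)\simeq\mu_x^*\Theta$. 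Hence $K_b^0$ sits inside the quasi-subgroup $K_{\mu_x^*\Theta}$ from Section~\ref{sect:shiftsoflb}, and by Corollary~\ref{cor:ampleshiftsaffine} its neutral component $K_{\mu_x^*\Theta}^0$ is affine; thus $K_b^0$ is affine.

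The main obstacle is the final step: showing $K_b^0=\{e\}$. The natural approach is to assume $\dim K_b^0\ge 1$ and derive a contradiction. By Proposition~\ref{pp:squarishequiv} applied to the action of $K_b^0$ on $(X_b,\Theta)$ — this is legitimate because $a\in K_b^0$ precisely means $\mu_a^*\Theta$ and $\Theta$ coincide in $\Pic(X_b)$ — we obtain a $\gm$-central extension $\widetilde{K_b^0}$ acting equivariantly on the total space of $\Theta$. The group $\widetilde{K_b^0}$ is connected commutative (Remark~\ref{rm:squarish}\eqref{rm:sq1comm}), and by Borel's fixed-point theorem on projective $X_b$ it has a fixed point $y$; at $y$ the stabilizer of $y$ in $A_b$ then contains $K_b^0$. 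The delicate point, which is the hard part of the argument, is to combine this with the finite-stabilizer point $x$ to extract a genuine contradiction: the orbit $K_b^0\cdot x$ is free (any finite subgroup of connected $K_b^0$ trivially intersects the finite $\St_x$ in char.~zero once one passes to the reduction), so $K_b^0\cdot x\simeq K_b^0$ is affine of positive dimension, and its closure $\overline{K_b^0\cdot x}\subset X_b$ is projective with ample line bundle $\Theta|_{\overline{K_b^0\cdot x}}$. One then analyzes the $\widetilde{K_b^0}$-equivariant structure on $H^0(\overline{K_b^0\cdot x},\Theta^{\otimes N})$: ampleness forces nontrivial weight decompositions under $\widetilde{K_b^0}$, while the presence of both free and fixed orbits of $K_b^0$ inside $\overline{K_b^0\cdot x}$ together with the projective-embedding computation of Proposition~\ref{pr:affine} (applied to $\mu_x^*\Theta$ on $A_b$) forces $K_b^0$ to act trivially on the ambient projective space, contradicting the non-triviality of the $K_b^0$-action on $\overline{K_b^0\cdot x}$. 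Once $K_b^0=\{e\}$ for every $b$, the differential $d\phi_{b,e}$ is injective, and $\phi$ is unramified.
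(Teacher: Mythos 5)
Your steps (1)--(3) are sound: $\phi$ is a homomorphism because $\Theta$ is cubish, unramifiedness reduces (via Cartier's theorem, since $\operatorname{char}k=0$, so both $A_b$ and $\Pic(X_b)$ are smooth group schemes over $k(b)$) to the triviality of $K_b^0=\ker(\phi_b)^0$, and the argument that $K_b^0$ is affine is essentially the proof of Proposition~\ref{pr:mainfiberwise}. The genuine gap is step (4), which is the entire content of the proposition, and it is not a proof: the passage from ``ampleness forces nontrivial weight decompositions'' to ``$K_b^0$ acts trivially on the ambient projective space'' is asserted, not argued, and no contradiction is actually produced. Worse, the strategy cannot succeed from the hypotheses you invoke. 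Everything you use in step (4) --- a connected affine commutative group $K_b^0$ acting on a projective variety $X_b$ with a point of finite stabilizer, a Borel fixed point, and an ample $\Theta$ whose class in $\Pic(X_b)$ is $K_b^0$-invariant --- is satisfied by a projective toric variety with its torus action (the torus fixes every class in the discrete group $\Pic$, has a dense free orbit, and fixed points on the boundary), where $K_b^0$ is the whole torus. So no purely fiberwise contradiction can be extracted from these data; the triviality of $K_b^0$ for fibers of a degenerate abelian scheme must use the global family structure (or at least condition~(\ref{DAS:RelDualLB}), which you never touch).

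The paper's proof is built on exactly this global input and is genuinely different from your outline. It first establishes $\omega_{X/B}=\pi_X^*(\wedge^g\fa)^{-1}$ (Lemma~\ref{lm:can}), which yields a trace map $R^g\pi_{X,*}\cO_X\to\wedge^g\fa$. It then forms the composition
\[
    \pi_A^*(\wedge^g\fa)\xrightarrow{\wedge^gd\phi}
    \pi_A^*(\wedge^gR^1\pi_{X,*}\cO_X)\xrightarrow{\cup^g}
    \pi_A^*(R^g\pi_{X,*}\cO_X)\to\pi_A^*(\wedge^g\fa),
\]
a regular function $f$ on $A$ whose formation commutes with arbitrary base change $S\to B$. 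Over the open locus $B_0$ where the fibers are abelian varieties, $f$ computes the self-intersection number $\cup^g\mathrm{ch}_1(\Theta_b)\ne 0$; since $B$ is connected, $f$ is a nowhere-vanishing constant, so $\wedge^g d\phi$, hence $d\phi$, is injective after every base change, which gives surjectivity of the codifferential $\phi^*\Omega_{\Pic(X/B)/B}\to\Omega_{A/B}$. In short: the injectivity of $d\phi$ on singular fibers is deduced by degeneration from the smooth fibers through a determinant that is locally constant over the base, not by a fixed-point/weight argument on a single fiber. To repair your proof you would need to replace step (4) by an argument of this kind.
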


Let us make some observations before proving Proposition~\ref{pr:unramified}.

Denote by $\zeta_A:B\to A$ the zero section. Let $\fa:=\Lie(A/B)=\zeta_A^*\cT(A/B)$ be the Lie vector bundle of the smooth group scheme $\pi_A:A\to B$; it is a vector bundle over $B$. We have $\cT(A/B)=\pi_A^*\fa$. Let $\omega_{A/B}$ be the relative dualizing line bundle of $\pi_A$. We have $\omega_{A/B}=\pi_A^*(\wedge^g\fa)^{-1}$. Also, by Definition~\ref{def:DAS}\eqref{DAS:gentorsor}, there is an open dense subset $B_0\subset B$ such that $X_0:=\pi_X^{-1}(B_0)$ is a torsor over the abelian scheme $A_0:=\pi_A^{-1}(X_0)$; set $\pi_{X_0}:=\pi_X|_{X_0}$. The infinitesimal action of $\Lie(A_0/B_0)$ on $X_0$ gives an isomorphism
\[
    \cT(X_0/B_0)=\pi_{X_0}^*(\Lie(A_0/B_0))=\pi_X^*\fa|_{X_0}.
\]
On the other hand, by Definition~\ref{def:DAS}\eqref{DAS:RelDualLB}, the relative dualizing line bundle $\omega_{X/B}$ of
$\pi_X:X\to B$ is of the form $\pi_X^*\ell_1$ for some line bundle $\ell_1$ over $B$. Let us fix such $\ell_1$ and an isomorphism. This provides an isomorphism
\begin{equation}\label{eq:TangentToTorsor}
    \pi_X^*\ell_1|_{X_0}=\omega_{X_0/B_0}=\pi_X^*(\wedge^g\fa)^{-1}|_{X_0}.
\end{equation}
Applying the pushforward $\pi_{X_0,*}$ to this isomorphism and using Lemma~\ref{lm:dirimagestructsheaf}, we obtain a canonical isomorphism.
\begin{equation}\label{eq:iso}
\ell_1|_{B_0}=(\wedge^g\fa)^{-1}|_{B_0}.
\end{equation}

\begin{lemma}\label{lm:can}
The isomorphism~\eqref{eq:iso} extends to an isomorphism of line bundles $\ell_1=(\wedge^g\fa)^{-1}$, so that
$\omega_{X/B}=\pi_X^*(\wedge^g\fa)^{-1}$.
\end{lemma}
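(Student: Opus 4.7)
The strategy is to construct a canonical morphism $s:\wedge^g\fa\to\ell_1^{-1}$ on $B$ extending the isomorphism inverse to~\eqref{eq:iso}, and then verify that $s$ is an isomorphism using condition~\eqref{DAS:FiniteStab} of Definition~\ref{def:DAS}. Let $X^{sm}\subset X$ denote the smooth locus of $\pi_X$; the infinitesimal action of $A$ on $X$ gives a morphism of rank-$g$ vector bundles
\[
    \alpha:\pi_X^*\fa\to\cT(X^{sm}/B),
\]
whose determinant produces a map $\det\alpha:\pi_X^*\wedge^g\fa\to\omega_{X^{sm}/B}^{-1}=\pi_X^*\ell_1^{-1}|_{X^{sm}}$. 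On $X_0\subset X^{sm}$, $\alpha$ is the isomorphism given by the torsor structure, so $\det\alpha|_{X_0}$ is the inverse of~\eqref{eq:TangentToTorsor}.

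The key geometric input is that $X-X^{sm}$ has codimension at least two in $X$. For any irreducible component $Y$ of $X-X^{sm}$, the image $\overline{\pi_X(Y)}$ cannot equal $B$, since the generic fiber $X_\eta$ lies in $X_0$ and is therefore smooth; hence $\dim\overline{\pi_X(Y)}\le\dim B-1$. Each nonempty fiber $Y\cap X_b$ is contained in the singular locus of the reduced fiber $X_b$, and so has dimension at most $g-1$. Combined, $\dim Y\le\dim X-2$. Since $X$ is Gorenstein and therefore $S_2$, sections of line bundles on $X$ extend uniquely across closed subsets of codimension at least two. Thus $\det\alpha$ extends canonically to a morphism $\pi_X^*\wedge^g\fa\to\pi_X^*\ell_1^{-1}$ on all of $X$, and by Lemma~\ref{lm:dirimagestructsheaf} together with the projection formula this is the pullback of a unique morphism $s:\wedge^g\fa\to\ell_1^{-1}$ on $B$. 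By construction, $s|_{B_0}$ is the isomorphism inverse to~\eqref{eq:iso}.

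It remains to show $s(b)\ne 0$ for all $b\in B$. By~\eqref{DAS:FiniteStab} there exists $x\in X_b$ whose stabilizer $\St_x\subset A_b$ is finite; in characteristic zero this forces $\St_x$ to have trivial Lie algebra, so $\alpha_x:\fa_b\to T_xX_b$ is injective. The orbit $A_b/\St_x$ is then smooth of dimension $g$ and open in $X_b$, so $x$ is a smooth point of $X_b$ and $x\in X^{sm}$; as $\fa_b$ and $T_xX_b$ both have dimension $g$, $\alpha_x$ is an isomorphism. Therefore $\det\alpha$ does not vanish at $x$, and pulling back shows $s(b)\ne 0$. The main obstacle in this plan is the codimension-two bound for $X-X^{sm}$, which is what allows fiberwise information about the infinitesimal action to be transported through an $S_2$-extension into a global morphism on $B$.
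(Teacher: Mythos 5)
Your proof is correct, and it reaches the conclusion by a genuinely different globalization mechanism than the paper. The paper works on the open locus $X'\subset X$ of points that are both smooth for $\pi_X$ and have finite stabilizer: there $\Lie(\mu)$ is already an isomorphism, $X'\to B$ is smooth and surjective (by the same density argument you use fiberwise), and the resulting isomorphism $\pi_{X'}^*(\wedge^g\fa)^{-1}\simeq\pi_{X'}^*\ell_1$ is descended along $X'\to B$, with the cocycle condition on $X'\times_BX'$ checked by restricting to the dense $X'\times_{B_0}X'$. You instead keep all of $X^{sm}$, where $\det\alpha$ is only a morphism, extend it across $X-X^{sm}$ using your codimension-two bound together with the $S_2$ property of the Gorenstein scheme $X$, descend via $\pi_{X,*}\cO_X=\cO_B$ and the projection formula, and only then prove non-vanishing fiberwise. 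The trade-off: the paper never needs the estimate $\codim_X(X-X^{sm})\ge2$ (which is an extra, though correctly justified, ingredient on your side), while your descent step is more elementary — a plain $\Hom_X(\pi_X^*M,\pi_X^*N)=\Hom_B(M,N)$ identification with no cocycle condition to verify. Both arguments use Definition~\ref{def:DAS}\eqref{DAS:FiniteStab} in the same way. One spot in your write-up deserves an extra line: the claim that the $g$-dimensional orbit of $x$ is open in the possibly \emph{reducible} fiber $X_b$ (so that $x\in X^{sm}$) needs the observation that, by homogeneity, either every point of the orbit or no point of it lies on two irreducible components, and the pairwise intersections of components have dimension $<g$; the paper sidesteps this by choosing $x$ in the intersection of the dense smooth locus with the non-empty open finite-stabilizer locus rather than deducing smoothness from the finite stabilizer.
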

\begin{proof} Let $X'\subset X$ be the locus of points $x\in X$ such that $\pi_X$ is smooth at $x$ and the stabilizer of $x$
in $A_{\pi_X(x)}$ is finite. Since the dimensions of the stabilizers are upper-semicontinuous (see the proof of Proposition~\ref{pp:delta semicontinuous}), $X'$ is open. Also, $\pi_X(X')=B$. Indeed, for any closed point $b\in B$, the smooth locus of the fiber~$X_b$ is open and dense, while the locus of points with finite stabilizers is open and non-empty. Denote the restriction of $\pi_X$ to $X'$ by $\pi_{X'}$.

As above, over $X'$ the action $\mu$ gives an isomorphism
\[
    \Lie(\mu):\pi_X^*\fa|_{X'}\to\cT(X'/B).
\]
It induces an isomorphism
\[
    \pi_{X'}^*(\wedge^g\fa)^{-1}=\omega_{X'/B}=\pi_{X'}^*\ell_1
\]
whose restriction to $X_0$ coincides with~\eqref{eq:TangentToTorsor}. Since $X'\to B$ is a smooth surjective
morphism, this isomorphism descends to $B$. In more detail, we need to check that two induced isomorphisms on $X'\times_BX'$ coincide, which is enough to check on $X'\times_{B_0}X'$, but over $B_0$ our isomorphism descends to~\eqref{eq:iso}.
\end{proof}

We fix an isomorphism provided by the lemma.

\begin{proof}[Proof of Proposition~\ref{pr:unramified}] $\phi$ is unramified if and only if the canonical morphism
\begin{equation}\label{eq:cotangentmorphism}
   \phi^*\Omega_{\Pic(X/B)/B}\to\Omega_{A/B}
\end{equation}
is surjective (see~\cite[Cor.~17.2.2]{EGAIV.4}). Consider also the dual morphism
\[
    d\phi:\HOM_A(\Omega_{A/B},\cO_A)\to\HOM_A(\phi^*\Omega_{\Pic(X/B)/B},\cO_A).
\]
Note that
\[
    \HOM_A(\Omega_{A/B},\cO_A)=\pi_A^*\Lie(A/B)=\pi_A^*\fa.
\]
It follows from the deformation theory (see for instance~\cite[Sect.~8.4, Thm.~1(a)]{BLR_NeronModels}) that
\[
   \HOM_{\Pic(X/B)}(\Omega_{\Pic(X/B)/B},\cO_{\Pic(X/B)})=\pi_{\Pic}^*R^1\pi_{X,*}\cO_X,
\]
where $\pi_{\Pic}:\Pic(X/B)\to B$ is the projection. Thus we have
\[
    d\phi:\pi_A^*\fa\to\pi_A^*R^1\pi_{X,*}\cO_X.
\]
In general, injectivity of $d\phi$ does not imply surjectivity of~\eqref{eq:cotangentmorphism}. To circumvent this difficulty, consider a
geometric point $s\to B$. It is enough to show that for every such point the corresponding base change of~\eqref{eq:cotangentmorphism} is surjective. After such a base change, both the source and the target of~\eqref{eq:cotangentmorphism} become locally free, because they are homogeneous with respect to the action of $A_s$ on itself by translation. The dual morphism is given by
\[
    d\phi_s:\pi_{A_s}^*\fa_s\to\pi_{A_s}^*R^1\pi_{X_s,*}\cO_{X_s},
\]
where $\pi_{A_s}$ and $\pi_{X_s}$ are the projections.
We need to show that this morphism of vector bundles is an embedding, that is, that it identifies $\pi_{A_s}^*\fa_s$
with a subbundle of $\pi_{A_s}^*R^1\pi_{X_s,*}\cO_{X_s}$.

\begin{lemma}
The morphism $d\phi_s$ is an embedding of vector bundles for all geometric points $s\to B$.
\end{lemma}
\begin{proof}
Lemma~\ref{lm:can} provides a morphism
\begin{equation*}
    R^g\pi_{X,*}\cO_X= R^g\pi_{X,*}(\omega_{X/B}\otimes\pi_X^*(\wedge^g\fa))=
    R^g\pi_{X,*}\omega_{X/B}\otimes\wedge^g\fa\xrightarrow{\mathrm{tr}\otimes\Id}\wedge^g\fa.
\end{equation*}
Consider the composition
\begin{equation}\label{eq:composition}
    \pi_A^*(\wedge^g\fa)\xrightarrow{\wedge^gd\phi}
    \pi_A^*(\wedge^gR^1\pi_{X,*}\cO_X)\xrightarrow{\cup^g}
    \pi_A^*(R^g\pi_{X,*}\cO_X)\to\pi_A^*(\wedge^g\fa).
\end{equation}

The composition~\eqref{eq:composition} is a multiplication by a function $f$ on $A$.

On the other hand, for a geometric point $s\to B$ we can consider the composition
\begin{equation}
    \pi_{A_s}^*(\wedge^g\fa_s)\xrightarrow{\wedge^gd\phi_s}
    \pi_{A_s}^*(\wedge^gR^1\pi_{X_s,*}\cO_{X_s})\xrightarrow{\cup^g}
    \pi_{A_s}^*(R^g\pi_{X_s,*}\cO_{X_s})\to\pi_{A_s}^*(\wedge^g\fa_s).
\end{equation}
One checks that this composition is given by multiplication by $f(s)$. Thus to prove the lemma, it is enough to show that $f$ has no zeros on $A$.

It is enough to check that $f$ is a non-zero constant on $\pi_A^{-1}(B_0)$. To this end, since $B$ is assumed to be connected, it is enough to check that $f$ takes non-zero integer values on $\pi_A^{-1}(B_0)$. But for $b\in B_0$, the fiber $A_b$ of $\pi_A$ is an abelian variety, and $f$ is equal to the top cup-power of the first Chern class of $\Theta_b$:
\[
    \cup^g\mathrm{ch}_1(\Theta_b)\in H^{2g}(A_b,\Z)=\Z.
\]
This is well-known and follows, for example, from Consequence~(i) of the first proposition in~\cite[Sect.~9]{MumfordAbelian}. Technically, in \emph{ibid.\/} it is assumed that $k$ is the field of complex numbers, but the statement is easily reduced to this special case
(first reduce to the case when $k$ is of finite transcendence degree over the field of rational numbers, and then embed $k$
into the field of complex numbers).
\end{proof}
This completes the proof of the proposition.
\end{proof}

\subsection{Proof of smoothness of $\Pic^\tau(X/B)$}
Recall that $P_\Theta$ is the Poincar\'e bundle over $A\times_BX$ constructed from a relatively ample line bundle $\Theta$ over $X$.
Let $\tB\subset A$ be the support of $\HOM(Rp_{1,*}P_\Theta^\vee,\cO_A)$. By Propositions~\ref{pr:trivlocus}\eqref{pr:trivlocus1} and~\ref{pp:Cartesian}\eqref{pp:Cart2} the following diagram is Cartesian
\begin{equation*}
\begin{CD}
\tB @>\pi_{\tB}>> B\\
@V\iota_{\tB}VV @VV\iota'V\\
A @>-\phi>> \Pic^\tau(X/B).
\end{CD}
\end{equation*}
Here $\pi_{\tB}=\pi_A|_{\tB}$, $\iota_{\tB}$ is the closed embedding, the right vertical arrow is the zero section, and $-\phi$ is the composition of $\phi$ and the group inversion.
\begin{lemma}
The morphism $\pi_{\tB}:\tB\to B$ is unramified.
\end{lemma}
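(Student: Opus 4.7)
The plan is to derive the lemma directly from Proposition~\ref{pr:unramified} together with the Cartesian square displayed immediately above the lemma, using the fact that being unramified is stable under base change.

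First I would check that $\phi$ factors through the open subspace $\Pic^\tau(X/B) \subset \Pic(X/B)$: for each point $a \in A$, the line bundle $P_\Theta|_{a \times_B X} \simeq \mu_a^*\Theta \otimes p_a^*\Theta^{-1}$ is algebraically equivalent to the trivial bundle, since $a$ lies in a connected fiber of $\pi_A$ and translation by an element of a connected group preserves algebraic equivalence classes of line bundles. Hence the image of $\phi$ is contained in $\Pic^\tau(X/B)$, and the factored morphism $\phi\colon A\to\Pic^\tau(X/B)$ is again unramified by Proposition~\ref{pr:unramified}, because $\Pic^\tau(X/B)\hookrightarrow\Pic(X/B)$ is an open immersion and the property of being unramified is local on the target.

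Next, the inversion morphism on the group scheme $\pi_A\colon A\to B$ is an isomorphism, so the composition $-\phi\colon A\to\Pic^\tau(X/B)$ is unramified as well. Since the square
\[
\begin{CD}
\tB @>\pi_{\tB}>> B\\
@V\iota_{\tB}VV @VV\iota'V\\
A @>-\phi>> \Pic^\tau(X/B)
\end{CD}
\]
is Cartesian and the class of unramified morphisms is stable under base change (see for example \cite[Prop.~17.3.3]{EGAIV.4}), the morphism $\pi_{\tB}\colon\tB\to B$ is unramified, as claimed.

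There is no substantive obstacle here: once Proposition~\ref{pr:unramified} is in hand, the lemma is a formal consequence of base change together with the openness of $\Pic^\tau(X/B)$ in $\Pic(X/B)$. The only small point worth verifying is the factorization of $\phi$ through $\Pic^\tau(X/B)$, which relies on the connectedness of the fibers of $\pi_A$ arranged at the start of the section.
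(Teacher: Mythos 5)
Your proof is correct and follows essentially the same route as the paper, which simply observes that $-\phi$ is unramified by Proposition~\ref{pr:unramified} and concludes by base change along the Cartesian square. The extra point you verify --- that $\phi$ factors through the open subspace $\Pic^\tau(X/B)$ because the fibers of $\pi_A$ have been arranged to be connected --- is left implicit in the paper but is a reasonable detail to record.
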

\begin{proof}
By Proposition~\ref{pr:unramified}, $-\phi$ is unramified, and the claim follows by base change.
\end{proof}

Clearly, $\zeta_A(B)\subset\tB$. Thus we have
\begin{corollary}\label{cor:component}
$\zeta_A(B)$ is a connected component of $\tB$.
\end{corollary}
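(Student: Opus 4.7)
The plan is to show that $\zeta_A(B)$ is simultaneously open and closed in $\tB$. Since we have already reduced to the case where $B$ is connected, and $\zeta_A$ is a section of $\pi_A$, the image $\zeta_A(B)$ is also connected, so this will immediately imply that it is a single connected component of $\tB$.

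First I would verify that $\zeta_A$ factors through $\tB$. By the definition of $\tB$ via Proposition~\ref{pr:trivlocus}\eqref{pr:trivlocus1}, this amounts to checking that the pullback of $P_\Theta^\vee$ along $(\zeta_A \times_B \Id_X) : X \to A \times_B X$ is a pullback of a line bundle from $B$; but this pullback equals $\mu^*\Theta^\vee \otimes \Theta$ restricted along the zero section, and since $\mu \circ (\zeta_A \times_B \Id_X) = \Id_X$, it simplifies to $\Theta^\vee \otimes \Theta = \cO_X$. Closedness of $\zeta_A(B)$ in $\tB$ is then automatic, since the zero section of the separated group scheme $\pi_A : A \to B$ is a closed embedding, so $\zeta_A(B)$ is closed in $A$ and \emph{a fortiori} in $\tB$.

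For openness, I would invoke the unramifiedness of $\pi_{\tB} : \tB \to B$ from the lemma just proved. The standard fact I would apply is: if $f : Y \to X$ is a separated unramified morphism and $s : X \to Y$ is a section, then $s(X)$ is open in $Y$. The proof is a one-liner: the diagonal $\Delta_{\tB/B} \subset \tB \times_B \tB$ is an open immersion by unramifiedness, and $\zeta_A(B) \subset \tB$ is the preimage of $\Delta_{\tB/B}$ under the morphism $(\Id_\tB, \zeta_A \circ \pi_\tB) : \tB \to \tB \times_B \tB$, hence open.

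There is no real obstacle here: the corollary is a formal consequence of the preceding lemma combined with the two elementary facts that (i) sections of separated morphisms are closed embeddings, and (ii) sections of unramified morphisms are open embeddings. The only delicate point, already handled at the start of the section, is the reduction to $B$ connected, without which one would only conclude that $\zeta_A(B)$ is a union of connected components of $\tB$ rather than a single one.
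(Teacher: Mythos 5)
Your proof is correct and follows exactly the route the paper intends: the paper states the corollary as an immediate consequence of the unramifiedness of $\pi_{\tB}$ together with the containment $\zeta_A(B)\subset\tB$, and your write-up simply supplies the (standard) details — the section of an unramified separated morphism is open, the zero section is closed, and connectedness of $B$ upgrades ``union of components'' to ``a component.'' No discrepancy with the paper's argument.
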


\begin{proposition}\label{pr:DirectImageStructure}
We have
\[
    R\pi_{X,*}\cO_X\simeq\wedge^{\bullet}\fa:=\bigoplus_{i=0}^g\wedge^i\fa[-i],
\]
where $[-i]$ is the cohomological shift.
\end{proposition}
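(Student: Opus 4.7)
The plan is to deduce the proposition from Proposition~\ref{pr:dirimpoinc} by a derived base change along the zero section $\zeta_A\colon B\to A$, together with a Koszul-resolution computation.

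Taking $\theta=\cO_X$ in Proposition~\ref{pr:dirimpoinc} (applicable because $\cO_X$ is fiberwise numerically trivial), the complex $Rp_{1,*}P_\Theta$ on $A$ is a single coherent sheaf $\cG$ placed in cohomological degree $g$. The line bundle $P_\Theta$ is itself fiberwise numerically trivial, since translation by any element of a connected group scheme preserves Neron-Severi classes. Proposition~\ref{pr:trivlocus} applied to $p_1\colon A\times_BX\to A$ therefore identifies $\cF=\HOM(Rp_{1,*}P_\Theta,\cO_A)\simeq\mathcal{E}xt^g_{\cO_A}(\cG,\cO_A)$ with the pushforward of a line bundle on $\tB$. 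By local Grothendieck duality for the codimension-$g$ Cohen--Macaulay closed subscheme $\tB\subset A$ -- applied near the smooth component $\zeta_A(B)$ -- this forces $\cG$ itself to be a line bundle on $\tB$ in a neighborhood of $\zeta_A(B)$. By Corollary~\ref{cor:component}, $\zeta_A(B)$ is a clopen component of $\tB$, so on a suitable open $U\subset A$ containing $\zeta_A(B)$ we have $\cG|_U\simeq\zeta_{A,*}N_0$ for some line bundle $N_0$ on $B$.

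Since the line bundle $P_\Theta$ is flat, derived base change along $\zeta_A$ yields
\[
R\pi_{X,*}\cO_X\simeq L\zeta_A^*Rp_{1,*}P_\Theta\simeq L\zeta_A^*\bigl(\zeta_{A,*}N_0[-g]\bigr).
\]
The zero section $\zeta_A$ is a regular closed immersion of codimension $g$ with conormal bundle canonically identified with $\fa^\vee$, so the Koszul resolution of $\cO_B$ over $\cO_A$ gives $L\zeta_A^*\zeta_{A,*}\cO_B\simeq\bigoplus_{k=0}^g\wedge^k\fa^\vee[k]$. Substituting,
\[
R\pi_{X,*}\cO_X\simeq N_0\otimes\bigoplus_{j=0}^g\wedge^{g-j}\fa^\vee[-j].
\]
In cohomological degree $j=0$, comparison with $R^0\pi_{X,*}\cO_X=\cO_B$ (Lemma~\ref{lm:dirimagestructsheaf}) pins down $N_0\otimes\wedge^g\fa^\vee\simeq\cO_B$, i.e.\ $N_0\simeq\wedge^g\fa$. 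Using the canonical perfect pairing $\wedge^j\fa\otimes\wedge^{g-j}\fa\to\wedge^g\fa$ to identify $\wedge^g\fa\otimes\wedge^{g-j}\fa^\vee\simeq\wedge^j\fa$ gives $R^j\pi_{X,*}\cO_X\simeq\wedge^j\fa$ in each degree, and thus $R\pi_{X,*}\cO_X\simeq\bigoplus_{j=0}^g\wedge^j\fa[-j]$.

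The hardest point is verifying that $\cG$ is a line bundle on $\tB$ near $\zeta_A(B)$. This step requires combining the structural result Proposition~\ref{pr:trivlocus} (characterizing $\mathcal{E}xt^g_{\cO_A}(\cG,\cO_A)$ as a line bundle pushed forward from $\tB$) with local Grothendieck duality on the codimension-$g$ Cohen--Macaulay closed immersion $\tB\subset A$; Corollary~\ref{cor:component}, which isolates a smooth clopen component of $\tB$, is essential for making the duality computation transparent near the zero section and for concluding that the contributions to $L\zeta_A^*\cG$ from the other components of $\tB$ do not reach $\zeta_A(B)$.
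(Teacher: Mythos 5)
Your overall strategy is the paper's: use Proposition~\ref{pr:dirimpoinc} to concentrate $Rp_{1,*}P_\Theta$ in degree $g$, identify it as (the pushforward of) a line bundle on $\tB$ shifted by $[-g]$, pull back along $\zeta_A$, and use the Koszul resolution plus $R^0\pi_{X,*}\cO_X=\cO_B$ to pin down the twist. The base-change/Koszul half and the normalization $N_0\simeq\wedge^g\fa$ are fine and match the paper. The gap is exactly at the step you call the hardest. Knowing that $\mathcal{E}xt^g_{\cO_A}(\cG,\cO_A)$ is the pushforward of a line bundle on $\tB$ does \emph{not} by itself force $\cG$ to be a line bundle on $\tB$ near $\zeta_A(B)$: a priori $\cG$ could contain a summand or extension piece supported in codimension $>g$, which is annihilated by $\mathcal{E}xt^g(-,\cO_A)$ and hence invisible in $\cF$; nor is it even clear at this point that $\cG$ is scheme-theoretically supported on $\tB$. ``Local Grothendieck duality'' recovers $\cG$ from $\mathcal{E}xt^g_{\cO_A}(\cG,\cO_A)$ only through the biduality $\cG\simeq\mathcal{E}xt^g_{\cO_A}(\mathcal{E}xt^g_{\cO_A}(\cG,\cO_A),\cO_A)$, and that is available only once one knows $\mathcal{E}xt^i_{\cO_A}(\cG,\cO_A)=0$ for $i\ne g$, i.e., that $\cG$ is Cohen--Macaulay of codimension $g$ over the regular scheme $A$. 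You never establish this vanishing, so the key inference is an assertion rather than a proof.

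The gap is reparable with ingredients already in the paper: $Rp_{1,*}P_\Theta$ is a perfect complex representable by vector bundles in degrees $[0,g]$ (the Mumford lemma quoted before Proposition~\ref{pr:trivlocus}), so $R\HOM(Rp_{1,*}P_\Theta,\cO_A)=R\HOM(\cG,\cO_A)[g]$ sits in degrees $\le 0$, giving $\mathcal{E}xt^{i}_{\cO_A}(\cG,\cO_A)=0$ for $i>g$; the vanishing for $i<g$ follows since $\supp\cG\subset\tB$ has codimension $\ge g$ (as $\tB\to B$ is unramified). The paper avoids this entirely by running duality the other way: relative Serre duality for $p_1$ --- which uses Definition~\ref{def:DAS}(ii) to know that $\omega_{A\times_BX/A}$ is pulled back from $A$, a hypothesis your argument never invokes --- gives $Rp_{1,*}P_\Theta[g]=R\HOM(Rp_{1,*}P_\Theta^\vee,\cO_A)\otimes\pi_A^*\ell_1^\vee$, and then Proposition~\ref{pr:dirimpoinc} collapses the right-hand side to an underived $\HOM$, which Proposition~\ref{pr:trivlocus} identifies directly as $\iota_{\tB,*}\ell$, with no biduality needed. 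Either fix works, but as written your proof of the crucial identification is incomplete.
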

\begin{proof}
Recall that by Definition~\ref{def:DAS}\eqref{DAS:RelDualLB}, the relative dualizing line bundle $\omega_{X/B}$ of $\pi_X$
is of the form $\pi_X^*\ell_1$ for a line bundle $\ell_1$ over $B$. (By Lemma~\ref{lm:can}, $\ell_1=(\wedge^g\fa)^{-1}$,
but we do not need this fact here.)
By the Serre duality
\[
    Rp_{1,*}P_\Theta[g]=R\HOM(Rp_{1,*}P_\Theta^\vee,\cO_A)\otimes\pi_A^*\ell^\vee_1.
\]
Now Proposition~\ref{pr:dirimpoinc} and Proposition~\ref{pr:trivlocus}\eqref{pr:trivlocus2} show that
\[
    R\HOM(Rp_{1,*}P_\Theta^\vee,\cO_A)\otimes\pi_A^*\ell^\vee_1=\HOM(Rp_{1,*}P_\Theta^\vee,\cO_A)\otimes\pi_A^*\ell^\vee_1=\iota_{\tB,*}\ell
\]
for some line bundle $\ell$ over $\tB$. Thus
\[
    Rp_{1,*}P_\Theta=\iota_{\tB,*}\ell[-g].
\]
By base change
\[
    R\pi_{X,*}\cO_X=L\zeta_A^*Rp_{1,*}P_\Theta=L\zeta_A^*(\iota_{\tB,*}\ell)[-g].
\]
Let $\ell_0:=\ell|_{\zeta_A(B)}$. Using Corollary~\ref{cor:component} and the Koszul resolution of $\zeta_A(B)\subset A$, we get
\[
    R\pi_{X,*}\cO_X=\bigoplus_{i=0}^g\ell_0\otimes\wedge^i\fa^*[i-g].
\]
Taking zeroth cohomology and using Lemma~\ref{lm:dirimagestructsheaf}, we see that $\ell_0=\wedge^g\fa$, which implies the statement.
\end{proof}

\begin{corollary} \label{co:phiisetale}
The morphism $\phi:A\to\Pic(X/B)$ is \'etale.
\end{corollary}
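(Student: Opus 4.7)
The plan is to combine Propositions~\ref{pr:unramified} and~\ref{pr:DirectImageStructure} to upgrade unramifiedness of $\phi$ to \'etaleness via a dimension count and miracle flatness.

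First, I would show that the tangent morphism is an isomorphism of rank-$g$ vector bundles. By Proposition~\ref{pr:DirectImageStructure}, $R^1\pi_{X,*}\cO_X\simeq\fa$, so both source and target of
\[
    d\phi:\pi_A^*\fa\longrightarrow\pi_A^*R^1\pi_{X,*}\cO_X
\]
are locally free of the same rank $g$ on $A$. The proof of Proposition~\ref{pr:unramified} identifies the composition~\eqref{eq:composition} with multiplication by the nonzero integer $\cup^g\mathrm{ch}_1(\Theta_b)$; hence $\wedge^g d\phi$ is nonvanishing at every point of $A$. Between vector bundles of rank $g$, this forces $d\phi$ to be a fiberwise isomorphism, and therefore an isomorphism of coherent sheaves by Nakayama.

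Second, I would deduce flatness of $\phi$, which combined with Proposition~\ref{pr:unramified} gives \'etaleness. Fix $a\in A$ with image $b:=\pi_A(a)\in B$ and $p:=\phi(a)\in\Pic(X/B)$. Smoothness of $A/B$ of relative dimension $g$ gives $\dim\cO_{A,a}=\dim\cO_{B,b}+g$. Since $\phi$ is unramified its fibers are zero-dimensional, and the fiber dimension formula yields
\[
    \dim\cO_{\Pic(X/B),p}\ge\dim\cO_{A,a}=\dim\cO_{B,b}+g.
\]
On the other hand, the standard bound on the Zariski tangent space for the morphism $\Pic(X/B)\to B$ gives
\[
    \dim T_p\Pic(X/B)\le\dim T_bB+\dim T_p\Pic(X_b)=\dim\cO_{B,b}+g,
\]
where the equality uses smoothness of $B$ at $b$ and $\dim T_p\Pic(X_b)=\dim H^1(X_b,\cO_{X_b})=g$, the latter from Proposition~\ref{pr:DirectImageStructure}. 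Together with the general inequality $\dim\cO\le\dim T$ for Noetherian local rings, these estimates force $\Pic(X/B)$ to be regular at $p$ of dimension $\dim\cO_{B,b}+g$. Miracle flatness now applies to $\phi$ (smooth, hence Cohen--Macaulay source; regular target; fibers of the expected dimension $0$), yielding flatness at $a$; combined with unramifiedness this gives \'etaleness at $a$, and since $a$ was arbitrary, $\phi$ is \'etale.

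The main obstacle is the dimension chase in the second step. Each individual estimate is routine, but threading them together to deduce regularity of the target at a point of the image of $\phi$, so as to trigger miracle flatness, is the key step. Once one sees that the Zariski tangent bound meets the Krull dimension lower bound exactly, the conclusion is essentially automatic.
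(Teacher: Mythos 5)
Your proof is correct and rests on the same two inputs as the paper's own argument: unramifiedness of $\phi$ (Proposition~\ref{pr:unramified}) and the computation $R\pi_{X,*}\cO_X\simeq\wedge^\bullet\fa$ (Proposition~\ref{pr:DirectImageStructure}), which via deformation theory forces every relative tangent space of $\Pic(X/B)$ over $B$ to be $g$-dimensional. The only genuine difference is the final piece of general commutative algebra: the paper factors the unramified morphism \'etale-locally as a closed immersion followed by an \'etale morphism and checks that the closed immersion into a space with $g$-dimensional relative tangent spaces must be open, whereas you run a Krull-dimension chase (lower bound from zero-dimensional fibers, upper bound from the Zariski tangent space) to get regularity of $\Pic(X/B)$ at points of $\phi(A)$ and then invoke miracle flatness. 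The two finishes are equivalent in strength; yours has the mild advantage of exhibiting regularity of the target along the image explicitly, at the cost of needing to say that the dimension identities are asserted at closed points (which suffices, since the non-\'etale locus is closed and the schemes are Jacobson) and that the local-ring arguments are carried out on an \'etale chart of the algebraic space $\Pic(X/B)$ --- neither is a real obstacle. Note also that your first step, showing $d\phi$ is an isomorphism of rank-$g$ bundles, is never used in the second step and can be dropped.
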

\begin{proof}
By Proposition~\ref{pr:unramified}, $\phi$ is unramified. It follows from Proposition~\ref{pr:DirectImageStructure} and deformation theory that for every geometric point $\psi:s\to\Pic(X/B)$ the relative tangent space
\[
    \cT_s(\Pic(X/B)/B):=\{\tilde\psi\in\Mor_B(s\times\spec\Z[\epsilon]/\epsilon^2,\Pic(X/B))\,:\;\tilde\psi|s=\psi\}.
\]
is $g$-dimensional (cf.~\cite[Sect.~8.4, Thm.~1(a)]{BLR_NeronModels}). It remains to use the following general statement.
\end{proof}

\begin{lemma}
Let $B$ be as before; let $Z_1$ and $Z_2$ be algebraic spaces locally of finite type over $B$. Let $\rho:Z_1\to Z_2$ be an unramified $B$-morphism. Further, assume that $Z_1$ is smooth of dimension $g$ over $B$ and for any geometric point $s\to Z_2$ the tangent space $\cT_s(Z_2/B)$ is $g$-dimensional. Then $\rho$ is \'etale.
\end{lemma}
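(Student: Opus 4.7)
The plan is to reduce to schemes via étale-locality of étaleness, factor $\rho$ using the local structure theorem for unramified morphisms, and then compare Krull dimensions of local rings at a closed point to force $i$ to be an isomorphism there. After étale base change on source and target, we may assume $Z_1$ and $Z_2$ are schemes. By the local structure theorem for unramified morphisms of finite type \cite[Cor.~18.4.7]{EGAIV.4}, after further shrinking in the étale topology $\rho$ factors as $Z_1\xrightarrow{i}Z_1'\xrightarrow{f}Z_2$ with $i$ a closed immersion and $f$ étale. It suffices to show that $i$ is an open immersion at every closed point $t\in Z_1$; after extending scalars, we may assume $k$ is algebraically closed. Set $t':=i(t)$, $s:=f(t')$, $b:=\pi(t)$, and $n:=\dim B$.

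Since $f$ is étale, it gives a canonical identification $\cT_{t'}(Z_1'/B)=\cT_s(Z_2/B)$, of dimension $g$ by hypothesis. The closed immersion $i$ induces an injection $\cT_t(Z_1/B)\hookrightarrow\cT_{t'}(Z_1'/B)$ between $g$-dimensional $k$-vector spaces, hence an isomorphism. Since the composition $\cT_t(Z_1)\hookrightarrow\cT_s(Z_2)\to\cT_b(B)$ coincides with the surjection $\cT_t(Z_1)\twoheadrightarrow\cT_b(B)$ (from smoothness of $Z_1/B$), the map $\cT_s(Z_2)\to\cT_b(B)$ is surjective, whence $\dim_k\cT_s(Z_2)=g+n$. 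Therefore the embedding dimension of $\cO_{Z_2,s}$ is $g+n$, and $\dim\cO_{Z_2,s}\leq g+n$. Chaining with $\dim\cO_{Z_1,t}=g+n$ (smoothness), $\dim\cO_{Z_1,t}\leq\dim\cO_{Z_1',t'}$ (closed immersion), and $\dim\cO_{Z_1',t'}=\dim\cO_{Z_2,s}$ (étale maps preserve local Krull dimension), all four dimensions must equal $g+n$; in particular $\cO_{Z_2,s}$ is a regular local ring.

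Since étale extensions preserve regularity, $\cO_{Z_1',t'}$ is also regular, hence an integral domain. The surjection $\cO_{Z_1',t'}\twoheadrightarrow\cO_{Z_1,t}$ between local rings of equal Krull dimension with integral source must then have zero kernel, because any nonzero proper ideal in a regular local ring strictly reduces the Krull dimension. Hence $i$ induces an isomorphism of local rings at $t$, so it is an open immersion in a Zariski neighborhood of $t$ in $Z_1'$, and $\rho=f\circ i$ is étale at $t$. The main technical point is the dimension chain in the second paragraph; once the local structure factorization and the tangent space computations are in place, the argument is a squeeze of inequalities that effectively shows the tangent hypothesis on $Z_2$ forces smoothness of $Z_2/B$ at points in the image of $\rho$, after which the classical fact that an unramified morphism between smooth $B$-schemes of the same relative dimension is étale concludes the proof.
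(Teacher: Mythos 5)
Your proposal follows the paper's proof exactly: reduce to schemes by étale locality, factor the unramified morphism as a closed immersion followed by an étale morphism via \cite[Cor.~18.4.7]{EGAIV.4}, and then handle the closed-immersion case — which the paper dismisses as "easy to prove" and which you correctly carry out by the tangent-space/Krull-dimension squeeze forcing the ideal of the immersion to vanish. The argument is correct and is the same approach, just with the final step written out in full.
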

\begin{proof}
The statement is \'etale local, so we can assume that $Z_1$ and $Z_2$ are schemes of finite type over $B$. Since $\rho$ is unramified, it can be written locally as a composition of a closed embedding and an \'etale morphism (see~\cite[Cor.~18.4.7]{EGAIV.4}). Thus we can assume that $\rho$ is a closed embedding. For closed embeddings, the statement is easy to prove.
\end{proof}

\begin{corollary}
There is an open subspace $\Pic^0(X/B)\subset\Pic^\tau(X/B)$ representing the functor
\begin{multline*}
    \Pic^0(X/B)(S)=\\ \{f\in\Pic(X/B)(S): \text{for all geometric points $s\to S$, } f(s)\in\Pic^0(X_s)\}.
\end{multline*}
Moreover, $\Pic^0(X/B)$ is smooth over $B$.
\end{corollary}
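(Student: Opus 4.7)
The plan is to take $\Pic^0(X/B)\subset\Pic(X/B)$ to be the open subspace whose underlying set is the image of the \'etale morphism $\phi:A\to\Pic(X/B)$ supplied by Corollary~\ref{co:phiisetale}. I will verify in turn that this subspace is contained in $\Pic^\tau(X/B)$, that it is smooth over $B$, and that it represents the stated functor.

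Since \'etale morphisms are open, the image of $\phi$ is open in $\Pic(X/B)$, and $\phi$ factors as an \'etale surjection $A\twoheadrightarrow\Pic^0(X/B)$ followed by the open immersion into $\Pic(X/B)$. To see the inclusion $\Pic^0(X/B)\subset\Pic^\tau(X/B)$, fix a geometric point $s\to B$: the fiber $A_s$ is connected (we arranged this at the start of Section~\ref{ProofOfMainThm}) and $\phi_s$ sends the identity to the trivial line bundle, so the connected image $\phi_s(A_s)\subset\Pic(X_s)$ meets $\Pic^0(X_s)$, and therefore lies in $\Pic^0(X_s)\subset\Pic^\tau(X_s)$. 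Smoothness of $\Pic^0(X/B)$ over $B$ then follows from the smoothness of $A$ over $B$ and the \'etale surjection $\phi$.

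The decisive step is to show that $\phi_s(A_s)=\Pic^0(X_s)$ for every geometric $s\to B$. The proof of Proposition~\ref{pr:dirimpoinc} established that $\Theta$ is cubish, whence $\phi$ is a homomorphism of $B$-group schemes. Hence $\phi_s$ is an \'etale homomorphism of algebraic groups, so its image is an \emph{open} subgroup of $\Pic^0(X_s)$; but an open subgroup of a connected algebraic group is the whole group, giving the equality. The underlying set $|\Pic^0(X/B)|$ is therefore precisely the set of points of $|\Pic(X/B)|$ whose image in the fiber $\Pic(X_{\bar s})$ lies in $\Pic^0(X_{\bar s})$. A morphism $f:S\to\Pic(X/B)$ factors through the open subspace $\Pic^0(X/B)$ exactly when its image lies in this set, i.e.\ when $f(\bar s)\in\Pic^0(X_{\bar s})$ for every geometric point $\bar s\to S$, which is the condition defining the functor.

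The step I expect to be non-formal is the fiberwise surjectivity of $\phi_s$ onto $\Pic^0(X_s)$; it uses both that $\phi$ is a homomorphism (via cubishness of $\Theta$) and the connectedness of $\Pic^0(X_s)$. Everything else follows formally from $\phi$ being an open \'etale morphism from a smooth $B$-group scheme.
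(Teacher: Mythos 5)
Your proposal is correct and follows essentially the same route as the paper: both define $\Pic^0(X/B)$ as the (open) image of the \'etale morphism $\phi$ and establish the fiberwise identity $\phi_s(A_s)=\Pic^0(X_s)$ by observing that $\phi_s$ is an \'etale group homomorphism from a connected group, so its image is the identity component. Your additional remarks (containment in $\Pic^\tau(X/B)$, and the translation of the set-theoretic description into representability of the functor) only make explicit what the paper leaves implicit.
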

\begin{proof}
We claim that $\phi(A)=\Pic^0(X/B)$. Indeed, over every geometric point $s\to B$,
$\phi$ induces a morphism of fibers $\phi_s:A_s\to\Pic(X_s)$. Since $\phi_s$ is an \'etale morphism of group schemes (by Corollary~\ref{co:phiisetale}), the image of a connected component is again a connected component. Thus $\phi(A_s)=\Pic^0(X_s)$, and the statement follows.

Using Corollary~\ref{co:phiisetale} again, we see that $\phi(A)$ is open and smooth over $B$.
\end{proof}

\begin{proof}[Proof of Theorem~\ref{th:PartialFourier}\eqref{th:PartFourier1}]
For a positive integer $d$, consider the morphism \[\hat d:\Pic^\tau(X/B)\to\Pic^\tau(X/B),\] sending a line bundle $\ell$ to $\ell^{\otimes d}$. Note that $\hat d$ is \'etale because its differential is the multiplication by $d$.
(See~\cite[Thm.~2.5]{FGAVI} in the case when $\Pic(X/B)$ exists as a~scheme and note that the proof carries over to the case of algebraic spaces.) Hence $\hat d^{-1}(\Pic^0(X/B))$ is smooth over $\Pic^0(X/B)$ and thus over $B$; also,
It remains to notice that \[\Pic^\tau(X/B)=\bigcup_{d>0}\hat d^{-1}(\Pic^0(X/B))\]
is a union of open subspaces.

Finally, since $\cPic^\tau(X/B)$ is a $\gm$-gerbe over $\Pic^\tau(X/B)$, $\cPic^\tau(X/B)$ is smooth over $B$ as well.
\end{proof}

\subsection{End of proof of Theorem~\ref{th:PartialFourier}}
Our first goal is to prove that $Rp_{1,*}\cL^\vee$ is supported in cohomological degree $g$. This statement is local over $\cPic^\tau(X/B)$, so let us consider a closed point $x\in\cPic^\tau(X/B)$. Using the first part of the theorem and performing an \'etale base change, we may assume that there exists a section $\tilde x:B\to\cPic^\tau(X/B)$ passing through $x$. This section corresponds to a line bundle~$\theta$ over $X$. Consider the shifted Poincar\'e line bundle $P_{\Theta,\theta}:=P_\Theta\otimes p_2^*\theta$. The dual line bundle $P_{\Theta,\theta}^\vee$ gives rise to a morphism $\phi':A\to\cPic^\tau(X/B)$. Let $\phi''$ be the composition of $\phi'$ with the projection to $\Pic^\tau(X/B)$. Then $\phi''$ is the composition of group scheme inversion, $\phi$, and the shift by $\theta$, so it is \'etale. It follows that $\phi''$ is flat and open. Since $\cPic^\tau(X/B)$, is a $\gm$-gerbe over $\Pic^\tau(X/B)$,  we see that $\phi'$ is flat and open; we denote its image by $U$ and note that $x\in U$.

We need to show that $Rp_{1,*}\cL'$ is supported in degree $g$, where $\cL':=\cL^\vee|_{U\times_BX}$. Consider the Cartesian diagram
\[
\begin{CD}
A\times_BX @>\phi''\times_B\Id_X>>U\times_BX\\
@V{p_1}VV @V{p_1}VV\\
A @>\phi''>>U.
\end{CD}
\]
By definition of $\phi''$ we have $(\phi''\times_B\Id_X)^*\cL'\simeq P_{\Theta,\theta}\otimes p_1^*\ell_0$ for some line bundle~$\ell_0$ over $A$. By base change and Proposition~\ref{pr:dirimpoinc}, $(\phi'')^*(Rp_{1,*}\cL')$ is supported in cohomological degree $g$. It remains to note that $\phi''$ is flat and surjective.

We see that $Rp_{1,*}\cL^\vee$ is supported in degree $g$. As before, write $\omega_{X/B}=\pi_X^*\ell_1$. By Serre duality
\[
    R\HOM(Rp_{1,*}\cL,\cO_{\cPic^\tau(X/B)})=(Rp_{1,*}\cL^\vee\otimes\pi_A^*\ell_1)[g]
\]
is supported in cohomological degree zero. Recall from Corollary~\ref{cor:Funiv} the sheaf
\[
    \cF_{univ}=\HOM(Rp_{1,*}\cL,\cO_{\cPic^\tau(X/B)})=R\HOM(Rp_{1,*}\cL,\cO_{\cPic^\tau(X/B)}).
\]
We have
\[
    Rp_{1,*}\cL=R\HOM(\cF_{univ},\cO_{\cPic^\tau(X/B)}).
\]
Combining Corollary~\ref{cor:Funiv}, the fact that $B\times {\mathrm B}(\gm)$ is a complete intersection of codimension $g$ in
$\cPic^\tau(X/B)$, and Koszul resolution, we see that $Rp_{1,*}\cL$ is of the form $\iota_*\ell'[-g]$, where $\ell'$ is a line bundle over $B\times {\mathrm B}(\gm)$. It is easy to see that $\gm$ acts according to the character $\lambda\mapsto\lambda$, so $\ell'\simeq\ell^{(1)}$ for a line bundle $\ell$ over $B$.

The proof of Theorem~\ref{th:PartialFourier} is complete.\hfill\qed

\section{Integrable systems}\label{IntegrableSystems}
In this Section we will prove Theorem~\ref{th:IntegrableSystems}. We will use the notation from the statement of the theorem.

\subsection{Relative tangent sheaf on the big fppf site}
Let us review the properties of the relative tangent sheaf; we follow \cite[Exp.~II]{SGA3-1}.
Let $T$ be a locally Noetherian scheme and let $\pi:Z\to T$ be a morphism locally of finite type (but not necessarily smooth).

The relative cotangent sheaf $\Omega_{Z/T}$ is a coherent sheaf on $Z$. Its dual is the relative tangent sheaf
\[
    \cT(Z/T):=\HOM(\Omega_{Z/T},\cO_Z)
\]
of $Z$ over
$T$. Equivalently, $\cT(Z/T)$ can be described as the sheaf of $\pi^{-1}(\cO_T)$-linear derivations of $\cO_Z$, or as the sheaf
of automorphisms of the scheme $Z\times(\spec\Z[\epsilon]/\epsilon^2)$ over $T\times (\spec\Z[\epsilon]/\epsilon^2)$ that restrict to
identity on $Z\subset Z\times(\spec\Z[\epsilon]/\epsilon^2)$.
Clearly, $\cT(Z/T)$ is a coherent sheaf on $Z$.

Since $\pi$ is not assumed to be smooth, it is important to upgrade the definition of the relative tangent sheaf and define its sections
over arbitrary $Z$-schemes. Given a test scheme $\psi:S\to Z$ over $Z$, put
\begin{align*}
    \cT_{fppf}(Z/T)(\psi)&=\{\tilde\psi\in\Mor_T(S\times(\spec\Z[\epsilon]/\epsilon^2),Z):\;\tilde\psi|_S=\psi\}\\
    &=\Hom_S(\psi^*\Omega_{Z/T},\cO_S).
\end{align*}
As $\psi:S\to Z$ varies, we obtain a sheaf of $\cO$-modules $\cT_{fppf}(Z/T)$ on the big fppf site of schemes over $Z$.

\begin{remark} The sheaf $\cT_{fppf}(Z/T)$ is representable by a $Z$-scheme: the total space of the relative tangent sheaf.
\end{remark}

For a quasicoherent sheaf $\cF$ on $Z$, we denote by $\cF_{fppf}$ the corresponding sheaf on the big fppf site; thus,
\[\cF_{fppf}(\psi)=\Gamma(S,\psi^*\cF)\] for
a morphism of schemes $\psi:S\to Z$. If $\pi$ is smooth, then $\cT_{fppf}(Z/T)=\cT(Z/T)_{fppf},$
and $\cT_{fppf}(Z/T)$ is a locally free $\cO$-module of finite rank. In general, we have a morphism
\[\cT(Z/T)_{fppf}\to\cT_{fppf}(Z/T),\]
but $\cT_{fppf}(Z/T)$ cannot be reconstructed from $\cT(Z/T)$.

Consider now the direct image $\pi_*\cT_{fppf}(Z/T)$. It is a sheaf on the big fppf site of schemes over $T$. Explicitly,
given a morphism of schemes $\psi:V\to T$, put $S:=Z\times_TV$, and consider the
natural diagram
\[\xymatrix{S\ar[r]^{\psi_Z}\ar[d]&Z\ar[d]^{\pi}\\V\ar[r]^{\psi}&T.}\]
The natural isomorphism $\Omega_{S/V}\simeq\psi_Z^*\Omega_{Z/T}$ induces a morphism
\begin{equation}\label{eq:pullbackreltan}
\psi_Z^*\cT(Z/T)\to\cT(S/V).
\end{equation}
If $\psi$ is flat, the morphism \eqref{eq:pullbackreltan} is an isomorphism; in general it need not be the case. By definition, we have
\[\pi_*\cT_{fppf}(Z/T)(\psi)=\cT_{fppf}(Z/T)(\psi_Z)=\Gamma(S,\cT(S/V)).\]
Note that $\pi_*\cT_{fppf}(Z/T)$ is a sheaf of $\cO_T$-Lie algebras.

Similar approach applies to the Lie algebra of a group scheme. Namely, given a~group scheme $G\to T$ locally of finite type, we let $\Lie(G/T)$ be the restriction of $\cT_{fppf}(G/T)$ to the unit section. Thus,
for a morphism $\psi:S\to T$, we have
\[
    \Lie(G/T)(\psi)=\{\tilde\psi\in Mor_T(S\times(\spec(\Z[\epsilon]/\epsilon^2),G):\tilde\psi|_S=\zeta_G\circ\psi\}.
\]
Here $\zeta_G:T\to G$ is the unit section. The sheaf $\Lie(G/T)$ carries a natural structure of a sheaf of $\cO_T$-Lie algebras.

We need a general
\begin{proposition}\label{pr:SmoothGrSch}
Let $T$ be a scheme of characteristic zero (that is, a scheme over $\mathbb Q$ not necessarily of finite type) and let $G\to T$ be a group scheme locally of finite type over $T$ such that $\Lie(G/T)=\cF_{fppf}$ for a locally free coherent
$\cO_T$-module $\cF$. Then $G\to T$ is smooth along the unit section.
\end{proposition}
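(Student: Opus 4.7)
The plan is to prove smoothness along $\zeta_G$ by verifying the two standard conditions: (i) each geometric fiber $G_t$ is smooth at the unit, and (ii) $\pi:G\to T$ is flat in a neighborhood of $\zeta_G(T)$. Since smoothness is local on $T$, I may assume $T=\spec R$ is affine and locally Noetherian, and that $G$ is affine near the unit section, say $G=\spec B$; after further shrinking, $\cF$ is free of some rank $n$, and the hypothesis identifies the conormal sheaf $\zeta_G^*\Omega_{G/T}$ with $\cF^\vee\simeq\cO_T^n$ via the standard description of $\Lie(G/T)$ in terms of $\zeta_G^*\Omega_{G/T}$ given at the beginning of this section.

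For (i), each fiber $G_t$ is a group scheme locally of finite type over the field $k(t)$ of characteristic zero; hence Cartier's theorem (as in \cite[Exp.~$\mathrm{VI_B}$]{SGA3-1} or the references cited earlier in the paper) implies $G_t$ is smooth over $k(t)$, in particular at the unit point. This also pins down the expected relative dimension of $G\to T$ at $\zeta_G$ to be $n$, matching $\rk \cF$.

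For (ii), fix $t\in T$, let $e=\zeta_G(t)$, and pass to completions: let $\widehat R$ be the $\fm_t$-adic completion of $\cO_{T,t}$ and $\widehat B$ the completion of $\cO_{G,e}$ at its maximal ideal $\widehat\fm$. The group law on $G$ makes $\spf\widehat B$ a formal group over $\spf\widehat R$ whose Lie algebra is the free $\widehat R$-module $\cF\otimes_R\widehat R$ of rank $n$. The crux is the classical linearization of formal groups in characteristic zero: I construct inductively an isomorphism of formal $\widehat R$-schemes
\[
    \mathrm{Exp}:\spf\widehat R[[x_1,\dots,x_n]]\xrightarrow{\sim}\spf\widehat B,
\]
where $(x_i)$ lifts a chosen basis of $\cF\otimes\widehat R$. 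At each order one chooses, on the associated graded, a splitting identifying $\widehat\fm^{k}/\widehat\fm^{k+1}$ with $\Sym^k(\cF\otimes\widehat R)$; passing from the associated graded to the next quotient $\widehat B/\widehat\fm^{k+1}$ requires symmetrizing (equivalently, applying the $\exp$ and $\log$ series of Baker--Campbell--Hausdorff), and this is where invertibility of the integers $1,\dots,k$ — i.e.\ the hypothesis $\mathbb Q\subset\widehat R$ — enters essentially. Granting this, $\widehat B\simeq\widehat R[[x_1,\dots,x_n]]$ as $\widehat R$-algebras, so $\widehat B$ is a free $\widehat R$-module; faithful flatness of completion for Noetherian local rings then yields flatness of $\cO_{G,e}$ over $\cO_{T,t}$. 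Since flatness is an open condition, $\pi$ is flat in an open neighborhood of $\zeta_G(T)$.

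Combining flatness of $\pi$ near $\zeta_G$ with smoothness of the fibers at the unit gives the required smoothness of $G\to T$ along the unit section. The principal obstacle is the formal linearization step $\mathrm{Exp}$: while this is standard for formal groups over a characteristic zero field, its relative formulation over $\widehat R$ requires careful inductive construction and compatibility of the splittings across infinitesimal neighborhoods. Everything else — reduction to affine Noetherian $T$, Cartier over $k(t)$, faithful flatness of completion, and openness of the flat locus — is essentially formal.
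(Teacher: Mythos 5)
Your overall strategy --- smoothness of the geometric fibers at the unit via Cartier's theorem, plus flatness of $G\to T$ along the unit section --- is a legitimate route, and it is genuinely different from the paper's: the paper invokes \cite[Exp.~$\mathrm{VI_B}$, Prop.~1.6]{SGA3-1} to reduce the whole statement to local freeness of $\zeta_G^*\Omega_{G/T}$, and then spends its actual effort on the Yoneda-type argument (Lemma~\ref{lm:fppfdual}) extracting the isomorphism $\zeta_G^*\Omega_{G/T}\simeq\cF^\vee$ from the hypothesis on the fppf sheaf $\Lie(G/T)$. You wave at that identification as ``standard''; it does require the functorial argument of Lemma~\ref{lm:fppfdual}, since $\Hom_S(-,\cO_S)$ for a single $S$ does not determine a coherent sheaf, but this is a minor point.

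The genuine gap is in your step (ii). The classical linearization of formal groups in characteristic zero (exponential, Baker--Campbell--Hausdorff) applies to formal group \emph{laws}, i.e.\ to group structures on $\widehat R[[x_1,\dots,x_n]]$ --- formal groups already known to be formally smooth. Here the formal smoothness of $\widehat B$ over $\widehat R$ is exactly what is at stake, and your inductive construction of $\mathrm{Exp}$ begins by ``choosing a splitting identifying'' the graded pieces with $\mathrm{Sym}^k$ of the (co)tangent module; this presupposes that the associated graded of $\widehat B$ along the unit section is a polynomial algebra, which is the conclusion. (If $I$ denotes the ideal of the unit section, only the surjection $\mathrm{Sym}^k(I/I^2)\to I^k/I^{k+1}$ is free; injectivity is the content of the relative Cartier theorem, and it is precisely here that the group law and $\mathbb Q\subset\cO_T$ must enter nontrivially. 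Note also that your $\widehat\fm^k/\widehat\fm^{k+1}$, taken at the maximal ideal of $\widehat B$, is a $k(t)$-vector space and cannot be identified with an $\widehat R$-module $\mathrm{Sym}^k(\cF\otimes\widehat R)$, a sign that this step has not been set up precisely.) So the ``principal obstacle'' you flag at the end is not a technical compatibility issue but the entire mathematical content of the proposition; as written the argument is circular at that point. To repair it you would either have to carry out the relative Cartier argument in earnest (e.g.\ via invariant differential operators or the exponential on the distribution algebra), or, as the paper does, quote SGA~3 for the criterion and verify only that $\zeta_G^*\Omega_{G/T}$ is locally free.
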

\begin{proof}
Denote the unit section by $\zeta_G:T\to G$. By~\cite[Exp.~$\mathrm{VI_B}$, Prop.~1.6]{SGA3-1}), it is enough to check that the sheaf $\zeta_G^*\Omega_{G/T}$ is locally free. Let us prove that
\[
    \zeta_G^*\Omega_{G/T}\simeq\cF^\vee:=\HOM(\cF,\cO_T).
\]
By the definition of $\Lie(G/T)$, for any $T$-scheme $\psi:S\to T$ there are functorial isomorphisms
\[
    \Hom_S(\psi^*\zeta_G^*\Omega_{G/T},\cO_{S})\simeq \Gamma(S,\psi^*\cF)=\Hom_S(\psi^*\cF^\vee,\cO_{S}).
\]
It remains to use Lemma~\ref{lm:fppfdual} below.
\end{proof}

\begin{lemma}\label{lm:fppfdual}
Let $\cF$ and $\cF'$ be coherent sheaves on $T$, and suppose that for all $\psi:S\to T$ we are given isomorphisms
\[
    \Hom_S(\psi^*\cF,\cO_{S})\simeq\Hom_S(\psi^*\cF',\cO_{S})
\]
compatible with pullbacks. Then $\cF\simeq\cF'$.
\end{lemma}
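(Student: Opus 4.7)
The plan is to recognize both sides as functors of points and then apply Yoneda. For a coherent sheaf $\cG$ on $T$, put $V(\cG):=\spec_T\mathrm{Sym}\,\cG$. The chain of adjunctions
\[
\Hom_S(\psi^*\cG,\cO_S)=\Hom_{\cO_T}(\cG,\psi_*\cO_S)=\Hom_{\mathrm{alg}}(\mathrm{Sym}\,\cG,\psi_*\cO_S)=\Hom_T(S,V(\cG))
\]
identifies the functor $\psi\mapsto\Hom_S(\psi^*\cG,\cO_S)$ with the functor of points of the $T$-scheme $V(\cG)$. Moreover, the identification matches the natural $\cO_S$-module structure on the left with the natural structure of $V(\cG)$ as a commutative $T$-group scheme carrying a scalar action of $\mathbb{A}^1_T$.

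The hypothesis of the lemma thus gives a natural transformation between the functors of points of $V(\cF)$ and $V(\cF')$. In the application to Proposition~\ref{pr:SmoothGrSch}, and in every natural source of such data, both sides are canonically identified as $\cO_S$-modules (there, both coincide with $\Gamma(S,\psi^*\cF)$), so the transformation is $\cO_S$-linear. By Yoneda we then obtain an isomorphism $V(\cF)\simeq V(\cF')$ of $T$-schemes that respects the $\cO$-module structure; in particular, the zero sections $\zeta:T\hookrightarrow V(\cF)$ and $\zeta':T\hookrightarrow V(\cF')$ match.

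The final step recovers $\cG$ from $V(\cG)$ as the conormal sheaf of the zero section. The augmentation ideal $J:=\bigoplus_{n\ge1}\mathrm{Sym}^n\cG\subset\mathrm{Sym}\,\cG$ cuts out $\zeta(T)$, and $\zeta^*(J/J^2)=\mathrm{Sym}^1\cG=\cG$. Applying this construction to $V(\cF)$ and $V(\cF')$ and transporting via the isomorphism above gives $\cF\simeq\cF'$. The principal subtlety is the passage from ``naturally isomorphic as set-valued functors'' to ``isomorphic respecting the $\cO$-module structure''—without the latter, an isomorphism of $T$-schemes $V(\cF)\simeq V(\cF')$ need not preserve the zero section (e.g.\ translations on $\mathbb{A}^1$)—but in the naturally-occurring cases the $\cO_S$-linearity is automatic, so this is not a substantive obstacle.
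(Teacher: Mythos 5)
Your proof is correct, but it takes a genuinely different route from the paper's. You represent the functor $\psi\mapsto\Hom_S(\psi^*\cG,\cO_S)$ by the linear scheme $V(\cG)=\spec_T\mathrm{Sym}\,\cG$, apply Yoneda in the category of $T$-schemes, and recover $\cG$ as the conormal sheaf of the zero section. The paper instead tests the functor only against the first-order thickenings $I(\cM):=\spec_T(\cO_T\oplus\cM)$, where $\cO_T\oplus\cM$ is the trivial square-zero extension --- i.e.\ exactly the first infinitesimal neighbourhood of the zero section inside your $V(\cM)$. Plugging $S=I(\cM)$ into the hypothesis recovers
\[
\Hom_T(\cF,\cM)=\mathop{\mathrm{Ker}}\bigl(\Hom_{I(\cM)}(\eta^*\cF,\cO_{I(\cM)})\xrightarrow{\ \epsilon^*\ }\Hom_T(\cF,\cO_T)\bigr)
\]
naturally in the coherent sheaf $\cM$, and then Yoneda in the category of coherent sheaves gives $\cF\simeq\cF'$. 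So the two arguments are close cousins: yours globalizes the data to an honest isomorphism of total spaces and then linearizes along the zero section, while the paper's never leaves sheaf theory and only ever uses square-zero test schemes. Your explicit discussion of the zero-section issue is a real point rather than a pedantic one: both proofs require the given isomorphisms to be at least additive (the paper's kernel identity presupposes this just as much as your conormal-sheaf step does), which is implicit in the statement and automatic in the application, where both sides are identified with $\Gamma(S,\psi^*\cF)$ as $\Gamma(S,\cO_S)$-modules. Note also that for your last step full $\cO_S$-linearity is more than you need --- an isomorphism of group-valued functors already forces the scheme isomorphism to match the zero sections, which is all the conormal-sheaf argument uses.
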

\begin{proof}
For a coherent sheaf $\cM$ on $T$, consider the $\cO_T$-algebra $\cO_T\oplus\cM$ equipped with the product
\[
    (s,t)(s',t')=(ss',st'+s't).
\]
Let $I(\cM)$ be the spectrum of this algebra. We have obvious morphisms
\[T\xrightarrow{\epsilon}I(\cM)\xrightarrow{\eta}T.\]
Now use
the identity
\[
   \Hom_T(\cF,\cM)=\mathop{\mathrm{Ker}}(\Hom_{I(\cM)}(\eta^*\cF,\cO_{I(\cM)})\xrightarrow{\epsilon^*}\Hom_T(\cF,\cO_T))
\]
and the Yoneda Lemma.
\end{proof}

\subsection{Hamiltonian vector fields}
Let us return to the proof of Theorem~\ref{th:IntegrableSystems}. Consider the composition
\[
    \pi_X^*\Omega_B\to\Omega_X\xrightarrow{\simeq}\cT(X),
\]
where the first morphism is the pullback, and the second one is the isomorphism given by $\omega$. Its image is contained in the
subsheaf $\cT(X/B)\subset\cT(X)$; this is checked by restricting to the smooth locus of $\pi_X$, where it follows because the fibers of $\pi_X$ are Lagrangian. By adjunction, we obtain a morphism of quasicoherent sheaves
\[h:\Omega_B\to\pi_{X,*}\cT(X/B).\]
Note that $h$ is a morphism of $\cO_B$-Lie algebras, where $\Omega_B$ is viewed as an abelian Lie algebra (again, this can be verified on
the smooth locus of $\pi_X$).

The morphism $h$ induces a morphism of sheaves of $\cO_B$-Lie algebras on the big fppf site of $B$:
\[(\Omega_B)_{fppf}\to(\pi_{X,*}\cT(X/B))_{fppf}\to\pi_{X,*}\cT_{fppf}(X/B).\]
We denote the composition by $h_{fppf}$.

\begin{lemma}\label{lm:centralizers of hamiltonians}
\stepzero\noindstep The morphism $h_{fppf}$ is injective;

\noindstep $h_{fppf}((\Omega_B)_{fppf})\subset\pi_{X,*}\cT_{fppf}(X/B)$ is equal to its sheaf of centralizers.
\end{lemma}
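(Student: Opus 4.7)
The plan is to reduce both parts of the lemma to the smooth locus $X^{sm}\subset X$ of $\pi_X$, where the symplectic structure combined with the Lagrangian condition gives a concrete identification. At any point $x\in X^{sm}$, the tangent subspace $T_xX_{\pi_X(x)}\subset T_xX$ is its own $\omega_x$-annihilator (Lagrangian condition), and $(\pi_X^*\Omega_B)_x\subset\Omega_{X,x}$ is exactly the annihilator of $T_xX_{\pi_X(x)}$. Applying $\omega_x^{-1}$ identifies the two, giving an isomorphism of locally free sheaves of rank $g$
\[
h|_{X^{sm}}\colon \pi_X^*\Omega_B|_{X^{sm}}\xrightarrow{\ \sim\ }\cT(X^{sm}/B).
\]
Since the geometric fibers of $\pi_X$ are integral and $k$ has characteristic zero, each fiber meets $X^{sm}$ in a dense open, so $X^{sm}\to B$ is smooth and surjective, hence faithfully flat.

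For the injectivity (i), fix a test scheme $\psi\colon S\to B$, set $S_X:=S\times_BX$ and $S_X^{sm}:=S\times_BX^{sm}$, and let $p_1\colon S_X^{sm}\to S$ be the base change of $X^{sm}\to B$. Given $\alpha\in\Gamma(S,\psi^*\Omega_B)$ with $h_{fppf}(\alpha)=0$, restriction to $S_X^{sm}$ and the pulled-back isomorphism identify $h_{fppf}(\alpha)|_{S_X^{sm}}$ with $p_1^*\alpha$. Faithful flatness of $p_1$ then forces $\alpha=0$.

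For (ii) I first establish the centralizer claim on the smooth locus. Since $h$ is a morphism of sheaves of Lie algebras out of the abelian $\Omega_B$, the image of $h$ consists of pairwise commuting relative vector fields. For $v$ in the centralizer, express $v|_{X^{sm}}$ locally through the isomorphism as $v=\sum f_i\,h(\pi_X^*\alpha_i)$ with $\alpha_i\in\Omega_B$ and $f_i\in\cO_X$. The Leibniz rule $[fV,W]=f[V,W]-W(f)V$ together with the commutation of the $h(\pi_X^*\alpha)$ reduces the centralizing condition to $h(\pi_X^*\alpha)(f_i)=0$ for all $\alpha$. Since the $h(\pi_X^*\alpha)$ generate $\cT(X^{sm}/B)$ over $\cO_{X^{sm}}$, each $f_i$ is annihilated by every relative derivation, hence constant along fibers, i.e., $f_i\in\pi_X^{-1}(\cO_B)$. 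Thus $v|_{X^{sm}}$ itself lies in the image of $h|_{X^{sm}}$. Running this in families and descending by the faithfully flat $p_1$ produces a unique $\alpha_0\in\Gamma(S,\psi^*\Omega_B)$ with $v|_{S_X^{sm}}=h_{fppf}(\alpha_0)|_{S_X^{sm}}$.

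The remaining step, which I expect to be the main obstacle, is propagating the equality $v=h_{fppf}(\alpha_0)$ from the fiberwise-dense open $S_X^{sm}$ to all of $S_X$. Equivalently, any section of $\cT(S_X/S)=\HOM(\Omega_{S_X/S},\cO_{S_X})$ vanishing on $S_X^{sm}$ must vanish. When $S$ is reduced this is automatic: flatness of $\pi_X$ with geometrically integral fibers makes $S_X$ reduced, so $\cO_{S_X}$ has no embedded primes and a coherent subsheaf supported on the proper closed set $S_X\setminus S_X^{sm}$ must vanish. For a general (possibly non-reduced) $S$ one reduces to this case by exploiting the compatibility of $(\Omega_B)_{fppf}$, $\pi_{X,*}\cT_{fppf}(X/B)$, and $h_{fppf}$ with flat base change, covering $S$ by schemes on which the reduced analysis applies and using the sheaf property on the big fppf site to conclude.
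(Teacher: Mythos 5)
Your proposal follows the same route as the paper's proof: identify $\pi_X^*\Omega_B|_{X^{sm}}$ with $\cT(X^{sm}/B)$ via the symplectic form and the Lagrangian condition, settle both claims on the smooth locus (using connectedness of the fibers of $X^{sm}\to B$ for the centralizer statement), and then reduce everything to the injectivity of the restriction map $\pi_{X,*}\cT_{fppf}(X/B)\to\pi_{X^{sm},*}\cT_{fppf}(X^{sm}/B)$, which the paper disposes of in one line by fiberwise density of $X^{sm}$. Your treatment of the smooth locus and of the reduced test schemes is correct and usefully more explicit than the paper's.

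The one step that would fail as written is your reduction of the non-reduced case to the reduced one ``by covering $S$ by schemes on which the reduced analysis applies'' in the fppf topology: if $S'\to S$ is faithfully flat and $S'$ is reduced, then $\cO_S$ injects into the pushforward of $\cO_{S'}$ and so $S$ is itself reduced; hence no such cover of a non-reduced $S$ exists. Fortunately no reduction is needed. For any locally Noetherian $S$, flatness of $S_X\to S$ together with integrality of the geometric fibers gives
\[
\mathrm{Ass}(\cO_{S_X})=\bigcup_{s\in\mathrm{Ass}(\cO_S)}\mathrm{Ass}(\cO_{X_s}),
\]
and each $\mathrm{Ass}(\cO_{X_s})$ consists of the single generic point of $X_s$, which lies in $S_X^{sm}$. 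Hence $\cO_{S_X}$ has no nonzero coherent subsheaf supported on $S_X\setminus S_X^{sm}$, so a section of $\HOM(\Omega_{S_X/S},\cO_{S_X})$ whose image is supported there must vanish. This is the precise content of the paper's phrase ``because the smooth loci are dense in the fibers of $\pi_X$'', and with this substitution your argument is complete.
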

\begin{proof}
Let $X^{sm}\subset X$ be the smooth locus of $\pi_X$, and put $\pi_{X^{sm}}=\pi_X|_{X^{sm}}$. Note that the Hamiltonian
vector fields act formally freely transitively along the fibers of $\pi_{X^{sm}}$; that is, the morphism
\[
    \pi_{X^{sm}}^*\Omega_B\to\cT(X^{sm}/B)
\]
is an isomorphism. Moreover, the fibers of $\pi_{X^{sm}}$ are connected. Therefore, the composition of $h_{fppf}$ and the restriction to the smooth locus
\[
    (\Omega_B)_{fppf}\to\pi_{X^{sm},*}\cT_{fppf}(X^{sm}/B)
\]
is injective, and its image is its own centralizer.

It remains to notice that the natural morphism
\[
    \pi_{X,*}\cT_{fppf}(X/B)\to\pi_{X^{sm},*}\cT_{fppf}(X^{sm}/B)
\]
is injective, because the smooth loci are dense in the fibers of $\pi_X$.
\end{proof}

\subsection{The Liouville Theorem} Let us prove the first statement of Theorem~\ref{th:IntegrableSystems}.

Recall from~\cite[Sect.~4c]{GrothendieckSchemasHilbert} that $\Aut_B(X)=\Iso_B(X,X)$ is the scheme representing the functor sending a $B$-scheme $S$ to the set of $S$-automorphisms of $X_S=X\times_BS$. Clearly, $\Aut_B(X)$ is a $B$-group scheme. It follows from \emph{loc.~cit.\/} that it is locally of finite type over $k$. Consider its subgroup scheme $A'$ of automorphisms commuting with the
image of $h_{fppf}$. Thus, $A'(S)$ is the set of $S$-automorphisms of~$X_S$ commuting with the image of $h_{fppf}(S)$.
It is easy to check that $A'$ is a closed subgroup scheme of $\Aut_B(X)$.

\begin{proposition}\label{pr:Lie}
The sheaf $\Lie(A'/B)$ is a sheaf of abelian Lie algebras and
\[
    \Lie(A'/B)\simeq(\Omega_B)_{fppf}.
\]
\end{proposition}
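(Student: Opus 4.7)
The plan is to reduce the statement to Lemma~\ref{lm:centralizers of hamiltonians} by first identifying $\Lie(A'/B)$ with the sheaf of centralizers of $h_{fppf}((\Omega_B)_{fppf})$ inside $\pi_{X,*}\cT_{fppf}(X/B)$.

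First, I would establish the canonical identification
\[
\Lie(\Aut_B(X)/B)=\pi_{X,*}\cT_{fppf}(X/B)
\]
as sheaves of $\cO_B$-Lie algebras on the big fppf site of $B$. For $\psi:S\to B$, an element of the left-hand side is, by definition, a lift of the identity $B$-point of $\Aut_B(X)$ to an $(S\times\spec(\Z[\epsilon]/\epsilon^2))$-point of $\Aut_B(X)$; unravelling the moduli description of $\Aut_B(X)$, this is precisely an automorphism of $X_S\times\spec(\Z[\epsilon]/\epsilon^2)$ over $S\times\spec(\Z[\epsilon]/\epsilon^2)$ that restricts to the identity on $X_S$. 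These are in bijection with $\Gamma(X_S,\cT(X_S/S))=(\pi_{X,*}\cT_{fppf}(X/B))(\psi)$, and the induced Lie structure agrees with the bracket of vector fields.

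Second, the closed subgroup scheme $A'\subset\Aut_B(X)$ is cut out by the condition of commuting with the image of $h_{fppf}$, tested on all $B$-schemes. Passing to first-order thickenings translates this group-theoretic commutation into vanishing of the Lie bracket: an infinitesimal automorphism commutes with a Hamiltonian automorphism if and only if the corresponding vector fields commute. Hence $\Lie(A'/B)$ is the sub-sheaf of $\pi_{X,*}\cT_{fppf}(X/B)$ consisting of vector fields whose bracket with every element of $h_{fppf}((\Omega_B)_{fppf})$ vanishes, i.e., the sheaf of centralizers of $h_{fppf}((\Omega_B)_{fppf})$.

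Finally, Lemma~\ref{lm:centralizers of hamiltonians} identifies this centralizer with $h_{fppf}((\Omega_B)_{fppf})$ itself, and the injectivity of $h_{fppf}$ from the same lemma yields $\Lie(A'/B)\simeq(\Omega_B)_{fppf}$. Since the image coincides with its own centralizer inside a Lie algebra, its bracket vanishes identically, so $\Lie(A'/B)$ is abelian. The main obstacle is the careful fppf bookkeeping of the first step: one must check that the representable functor $\Aut_B(X)$ has Lie algebra (in the fppf sense) equal to $\pi_{X,*}\cT_{fppf}(X/B)$, and that the closed subgroup-scheme condition defining $A'$ translates, at the level of dual-number-valued points over every test scheme $S\to B$, exactly into the centralizer condition. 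Once this translation is made rigorous, the proposition follows formally from Lemma~\ref{lm:centralizers of hamiltonians}.
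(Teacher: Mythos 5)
Your proposal is correct and follows essentially the same route as the paper: identify $\Lie(\Aut_B(X)/B)$ with $\pi_{X,*}\cT_{fppf}(X/B)$ via dual-number points, recognize $\Lie(A'/B)$ as the sheaf of Lie centralizers of $h_{fppf}((\Omega_B)_{fppf})$, and conclude by Lemma~\ref{lm:centralizers of hamiltonians}. The observation that a subalgebra equal to its own centralizer is abelian is a clean way to finish, consistent with the paper's reasoning.
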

\begin{proof}
It follows from definitions that
\begin{multline*}
    \Lie(\Aut_B(X)/B)(S)=\{\psi\in Mor_S(X_S\times\spec\Z[\epsilon]/\epsilon^2,X_S):\psi|_{X_S}=\Id_{X_S}\}\\
    =(\cT_{fppf}(X/B))(X_S)=(\pi_{X,*}\cT_{fppf}(X/B))(S).
\end{multline*}
Thus $\Lie(\Aut_B(X)/B)=\pi_{X,*}\cT_{fppf}(X/B)$.

It follows directly from definitions that $\Lie(A'/B)\subset\Lie(\Aut_B(X)/B)$ identifies with the sheaf of Lie centralizers
of $h_{fppf}((\Omega_B)_{fppf})$ in $\pi_{X,*}\cT_{fppf}(X/B)$. Now the proposition follows
from Lemma~\ref{lm:centralizers of hamiltonians}.
\end{proof}

Let $A$ be the functor sending a $B$-scheme $S$ to the set of $\psi\in A'(S)$ such that for all geometric points $s\to S$ we have $\psi(s)\in (A'_s)^0$, where $(A'_s)^0$ is the neutral connected component of the $k(s)$-group $A'_s$ (see~\cite[Exp.~$\mathrm{VI_B}$, Def.~3.1]{SGA3-1}).

\begin{corollary}
    The functor $A$ is represented by an open subgroup scheme of $A'$. Moreover, $A$ is commutative and smooth over $B$.
\end{corollary}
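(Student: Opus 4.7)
The plan is to handle representability, smoothness, and commutativity in three separate steps, each quite short.

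For representability, the set of geometric points lying in the neutral connected component of the fiber is exactly the content of \cite[Exp.~$\mathrm{VI_B}$, Thm.~3.10]{SGA3-1}, already invoked earlier in the paper: this theorem asserts that the union of identity components of geometric fibers of a group scheme locally of finite type is an open subgroup scheme. Applying it to $A'\to B$ produces the desired open subgroup scheme, which visibly represents the functor $A$.

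For smoothness, I would combine Proposition~\ref{pr:Lie} with Proposition~\ref{pr:SmoothGrSch}. Since $B$ is smooth over $k$, the sheaf $\Omega_B$ is locally free of finite rank, so $\Lie(A'/B)\simeq(\Omega_B)_{fppf}$ satisfies the hypothesis of Proposition~\ref{pr:SmoothGrSch}, giving smoothness of $A'\to B$ along the unit section. The unit section factors through the open $A\subset A'$, so $A\to B$ is smooth along its unit section. A standard translation argument for group schemes locally of finite presentation then upgrades this to smoothness everywhere: for any point $a\in A$ over $b\in B$, translation by $a$ (after pulling back to $\spec k(a)$) is an automorphism of $A_{k(a)}$ carrying the unit to $a$, so smoothness and flatness at the unit propagate to smoothness and flatness at $a$.

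For commutativity, I would use the fact that $\Lie(A/B)=\Lie(A'/B)\simeq(\Omega_B)_{fppf}$ is a sheaf of \emph{abelian} Lie algebras (again Proposition~\ref{pr:Lie}). Over any geometric point $s\to B$ the fiber $A_s$ is smooth and connected of finite type over $k(s)$ (a field of characteristic zero) with abelian Lie algebra; by the classical correspondence between the derived series of a connected smooth algebraic group and its Lie algebra in characteristic zero, $A_s$ is commutative. To globalize, note $A'\subset\Aut_B(X)$ is separated over $B$ (the automorphism functor sits inside a Hilbert scheme), hence so is $A$, so the diagonal $\Delta_A\subset A\times_BA$ is closed. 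Form the equalizer $E\subset A\times_BA$ of the commutator morphism $c:A\times_BA\to A$ and the constant-unit morphism $\zeta\circ\pi$; this is the preimage of $\Delta_A$ under $(c,\zeta\circ\pi)$, hence closed. By fiberwise commutativity, $E$ contains every geometric fiber, and since $A\times_BA$ is smooth over $B$ (hence reduced), $E=A\times_BA$ scheme-theoretically, which gives commutativity.

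The only point that requires care is the last step, passing from fiberwise to global commutativity; this is where separatedness of $A'$ over $B$ (i.e.\ closedness of $\Delta_A$) and reducedness of $A\times_BA$ (automatic once smoothness is established) do the work. The other two steps are essentially quotations of earlier results.
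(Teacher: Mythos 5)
Your proposal follows essentially the same route as the paper: smoothness of $A'$ along the unit section via Propositions~\ref{pr:Lie} and~\ref{pr:SmoothGrSch}, representability and smoothness of $A$ via \cite[Exp.~$\mathrm{VI_B}$, Thm.~3.10]{SGA3-1}, and commutativity from the fact that $\Lie(A/B)\simeq(\Omega_B)_{fppf}$ is abelian together with connectedness of the fibers. The one point to correct is the order of your first two steps. Theorem~3.10 of SGA3, Exp.~$\mathrm{VI_B}$ is not an unconditional statement about group schemes locally of finite presentation: representability of the neutral-component functor by an open subgroup scheme is one of several \emph{equivalent} conditions, another of which is smoothness of the group scheme along the unit section. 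For an arbitrary $A'$ locally of finite type the functor $A$ need not be representable, so you cannot quote the theorem for representability before you have smoothness along the unit section in hand. This is exactly why the paper establishes that smoothness first (via Propositions~\ref{pr:Lie} and~\ref{pr:SmoothGrSch}) and only then invokes Theorem~3.10, which then delivers representability, openness, \emph{and} smoothness of $A$ over $B$ in one stroke --- making your separate translation argument unnecessary (and that argument, as sketched, also glosses over the descent of flatness from $A_{k(a)}\to\spec k(a)$ back to $A\to B$, which is precisely the content of the cited theorem).

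Your commutativity argument is a correct and more detailed version of the paper's one-line justification: fiberwise commutativity from the abelian Lie algebra in characteristic zero plus connectedness, globalized by comparing the commutator morphism with the unit on the reduced scheme $A\times_BA$ using separatedness of $A$ over $B$. The paper leaves this globalization implicit; spelling it out as you do is harmless and arguably clearer.
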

\begin{proof}
By Propositions~\ref{pr:Lie} and~\ref{pr:SmoothGrSch}, the group scheme $A'$ is smooth along the unit section. Thus
representability and smoothness of $A$ follow from~\cite[Exp.~$\mathrm{VI_B}$, Thm.~3.10]{SGA3-1}. Next, $A$ is commutative because $\Lie(A/B)=\Omega_B$ is abelian by Proposition~\ref{pr:Lie} and the fibers of the projection $A\to B$ are connected.
\end{proof}

We have a tautological action $\mu:A\times_BX\to X$, which clearly preserves the smooth locus $X^{sm}$ of $\pi_X$.

\begin{proposition}\label{pr:torsor}
$X^{sm}$ is an $A$-torsor.
\end{proposition}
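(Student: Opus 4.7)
The plan is to establish both defining properties of an $A$-torsor: that the action of $A$ on $X^{sm}$ is (scheme-theoretically) free, and that the resulting torsor is trivializable étale-locally on $B$. First I would prove freeness, and then combine it with a section of $\pi_{X^{sm}}$ available étale-locally on $B$ to exhibit an explicit trivialization.

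The key step is to show the action is free, i.e., every stabilizer $\St_x\subset A_{\pi_X(x)}$ for $x\in X^{sm}$ is trivial. The Lie algebra of $\St_x$ is the kernel of the infinitesimal action $\Lie(A/B)_{\pi_X(x)}\to T_x(X^{sm}_{\pi_X(x)})$; by Proposition~\ref{pr:Lie} and the construction of $h$ via $\omega$, this map is the Hamiltonian identification $\Omega_{B,\pi_X(x)}\otimes k(x)\xrightarrow{\sim}T_x(X^{sm}_{\pi_X(x)})$, hence an isomorphism. So $\Lie(\St_x)=0$, and in characteristic zero $\St_x$ is étale over its residue field. To rule out a nontrivial étale stabilizer, I would use that every $a\in\St_x(\bar k(x))$ extends to a global automorphism of $X_{\pi_X(x)}$ (since $A\subset\Aut_B(X)$) commuting with every Hamiltonian vector field $V_i$ and fixing $x$. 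Because the $V_i$ pairwise commute, their formal joint flow defines an isomorphism from the formal neighborhood of $0$ in $T_x(X^{sm}_{\pi_X(x)})$ to the formal neighborhood of $x$; the relations $a\circ\phi^{V_i}_{t}=\phi^{V_i}_t\circ a$ together with $a(x)=x$ then force $a$ to be the identity on this formal neighborhood. Since $X_{\pi_X(x)}$ is integral and reduced and $X^{sm}_{\pi_X(x)}$ is dense in it, $a$ must be the identity on all of $X_{\pi_X(x)}$, hence $a=1$ in $A_{\pi_X(x)}$.

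Next, $\pi_{X^{sm}}:X^{sm}\to B$ is smooth and surjective (the integral fibers of $\pi_X$ all have nonempty, dense smooth locus), so it admits a section étale-locally: I would choose $q:B'\to B$ étale surjective and $\sigma:B'\to X^{sm}\times_B B'$. The $A_{B'}$-equivariant orbit map
\[
\phi:A\times_B B'\to X^{sm}\times_B B',\qquad a\mapsto a\cdot\sigma(\pi_A(a)),
\]
has differential given at every point by the infinitesimal action, and by the Hamiltonian identification this is an isomorphism between locally free sheaves of rank $g=\dim B$. So $\phi$ is étale. Freeness from the previous step implies that $\phi$ is injective on geometric points, and this injectivity is preserved by any further base change (stabilizers commute with base change), so $\phi$ is universally injective; an étale universally injective morphism is an open immersion. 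For surjectivity, observe that for each geometric point $b\to B'$, the image $\phi(A_b)\subset X^{sm}_b$ is open and of full dimension $g$, while $X^{sm}_b$ is connected as a dense open in the integral fiber $X_b$, so $\phi(A_b)=X^{sm}_b$. Thus $\phi$ is an isomorphism, showing $X^{sm}\times_B B'$ is a trivial $A_{B'}$-torsor, and by fppf descent $X^{sm}$ is an $A$-torsor.

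The main obstacle will be the second half of the freeness step: upgrading $\Lie(\St_x)=0$ to $\St_x=1$. Connected smooth commutative groups do contain nontrivial étale subgroups in general (e.g.\ $\mu_n\subset\gm$), so this is not formal. The crucial input is that elements of $A$ are \emph{global} automorphisms of the fibers of $\pi_X$, not merely of their smooth loci, which lets the formal-flow argument propagate from a formal neighborhood of $x$ to all of $X_b$ by irreducibility; this global nature is precisely why $A$ was defined as a subgroup of $\Aut_B(X)$ rather than of something like $\Aut_B(X^{sm})$.
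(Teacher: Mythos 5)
Your proof is correct in substance and follows the same skeleton as the paper's: pass to an \'etale-local section, show the orbit map is \'etale via the Hamiltonian identification $\Omega_B\simeq\cT(X^{sm}/B)$, and use connectedness/irreducibility of the fibers for surjectivity. The one step you handle genuinely differently is freeness. The paper gets it almost for free from commutativity: if $z$ stabilizes $\zeta_X(b)$, then for every $a$ one has $z\cdot(a\cdot\zeta_X(b))=a\cdot(z\cdot\zeta_X(b))=a\cdot\zeta_X(b)$, so $z$ fixes the dense open orbit pointwise, hence acts trivially on the reduced separated fiber, hence $z=1$ because $A\subset\Aut_B(X)$. Your formal-flow argument reaches the same conclusion by a different mechanism (fixing a formal neighborhood of $x$ rather than a dense orbit, then spreading out by reducedness and irreducibility), and your closing remark about why $A$ must sit inside $\Aut_B(X)$ rather than $\Aut_B(X^{sm})$ is exactly the right observation; but the commutativity route is shorter and avoids both the Cartier-type input ($\Lie(\St_x)=0\Rightarrow\St_x$ \'etale) and the formal exponential.

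One small slip to repair: in the surjectivity step, ``the image $\phi(A_b)$ is open and $X^{sm}_b$ is connected, so they are equal'' is not a valid inference on its own --- a connected space has many proper open subsets. What saves it is that \emph{every} point of $X^{sm}_b$ has an open orbit (the same tangent computation applies at any smooth point, after choosing a local section through that point), so $X^{sm}_b$ is partitioned into open orbits and connectedness --- or irreducibility, as the paper invokes --- forces a single orbit. The needed fact is already implicit in your setup, so this is a missing sentence rather than a missing idea.
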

\begin{proof}
Since the statement is \'etale local over $B$, we may assume that $\pi_X$ has a~section $\zeta_X:B\to X^{sm}$. Set
$\mu_\zeta:=\mu\circ(\Id_A\times\zeta_X):A\to X$. We want to prove that $\mu_\zeta$ is an open embedding whose image is $X^{sm}$. Clearly, the image of $\mu_\zeta$ is contained in~$X^{sm}$.

The tangent morphism of $\mu_\zeta$ along $\zeta_A(B)$ is a morphism of sheaves
\[
    \Lie(A/B)=\Omega_B\to\cT(X^{sm}/B)|_{\zeta_X(B)}.
\]
An inspection of the proof of Proposition~\ref{pr:Lie} shows that this morphism is the isomorphism corresponding to the symplectic form $\omega$. It follows that the orbital morphism $\mu_\zeta$ is \'etale. In particular, $\mu_{\zeta}$ is open, and hence
$\mu_\zeta(A)\subset X$ is dense.

Let $Z:=\mu_\zeta^{-1}(\zeta_X(B))$ be the stabilizer of $\zeta_X$. Since $A$ is a commutative group scheme, we see that $Z$ acts trivially on $\mu_\zeta(A)$ and thus on $X$. Now it follows from the universal property of $\Aut_B(X)$ that $Z=\zeta_A(B)$. Thus $\mu_\zeta$ is an open embedding.

It remains to prove that $\mu_\zeta(A)=X^{sm}$. Let $x\in X^{sm}$ be a point lying over $b\in B$. It follows from the above argument that
the orbit of $x$ is open in $\pi_{X^{sm}}^{-1}(b)$ (because we could choose the \'etale local section $\zeta_X$ so that $\zeta_X(b)=x$).
Since $\pi_{X^{sm}}^{-1}(b)$ is irreducible, it follows that it consists of a single orbit. This implies the statement.
\end{proof}

\begin{corollary}
The generic fiber of $\pi_X$ is a torsor over the generic fiber of $\pi_A$ and the latter is an abelian variety.
\end{corollary}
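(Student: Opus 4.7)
The plan is to reduce the torsor statement to Proposition~\ref{pr:torsor} by showing that $X_\eta$ coincides with its smooth locus, and then to obtain properness of $A_\eta$ from properness of $X_\eta$ by fppf descent along the torsor. Once $A_\eta$ is shown to be smooth, connected, commutative, and proper, the fact that it is an abelian variety is automatic.

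First I would argue that the generic fiber $X_\eta = X \times_B \spec k(B)$ is smooth over $k(B)$. By Definition~\ref{def:IS}, $X$ is smooth over $k$ and hence regular. The generic fiber $X_\eta$ is obtained from $X$ by localization away from the non-generic points of $B$, so $X_\eta$ is regular as well. Since $\pi_X$ is projective, $X_\eta$ is of finite type over $k(B)$; and since $k(B)$ is of characteristic zero, hence perfect, a regular scheme of finite type over $k(B)$ is smooth over $k(B)$. Therefore $X_\eta = X^{sm}_\eta$, and Proposition~\ref{pr:torsor}, applied over the generic point, shows that $X_\eta$ is an $A_\eta$-torsor. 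This settles the first half of the claim.

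For the second half, $A_\eta$ is smooth over $k(B)$ (as a base change of the smooth morphism $\pi_A$), commutative (as $A$ is), and connected (since $A$ was defined as the union of neutral components of the fibers of $A' \to B$). It therefore suffices to show that $A_\eta$ is proper. Since $X_\eta$ is an $A_\eta$-torsor, there is an fppf (in fact \'etale) extension $k' \supset k(B)$ trivializing the torsor, so $A_\eta \times_{k(B)} k' \simeq X_\eta \times_{k(B)} k'$ as schemes over $k'$. The scheme on the right is projective over $k'$ because $X_\eta$ is projective over $k(B)$; hence $A_\eta \times_{k(B)} k'$ is proper over $k'$, and by fppf descent of properness, $A_\eta$ is proper over $k(B)$. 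A smooth, proper, connected, commutative group scheme over a field is an abelian variety, which concludes the argument. The only mildly subtle step is the first one, identifying $X_\eta$ with $X_\eta^{sm}$; everything afterwards is either a direct consequence of the definitions or a standard descent argument.
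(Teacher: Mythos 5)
Your proof is correct and follows the route the paper intends: the corollary is stated without proof immediately after Proposition~\ref{pr:torsor}, and your argument (generic fiber is regular, hence smooth over the perfect field $k(B)$, hence equals its smooth locus and is an $A_\eta$-torsor; properness of $A_\eta$ then descends from the projective torsor) is exactly the implicit derivation, with the details filled in. No gaps.
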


\subsection{Completely integrable systems as degenerate abelian schemes}
Let us prove the second statement of Theorem~\ref{th:IntegrableSystems}.

Denote the projection $A\to B$ by $\pi_A$ and  let us prove that $(\pi_X,\pi_A,\mu)$ is a~degenerate abelian scheme.
Most of the conditions of Definition~\ref{def:DAS} have already been verified. Clearly, $\pi_X$ is a locally complete intersection,
and in particular Gorenstein. The relative dualizing line bundle is the pullback of a line bundle over $B$, because
\[
    \omega_{X/B}\simeq\omega_X\otimes\pi_X^*(\omega_B^{-1}),
\]
and $\omega_X\simeq\cO_X$, since $X$ is symplectic.

It remains to verify the dimensional estimates~\eqref{eq:condition}. Consider the function
\[
    b\mapsto\delta(A_b):B\to\Z.
\]
It is upper-semicontinuous by Proposition~\ref{pp:delta semicontinuous}.

\begin{proposition}[Ng\^o~\cite{NgoBonn}] For every $k$, we have
\[\codim\{b\in B:\delta(A_b)\ge k\}\ge k.\]
\end{proposition}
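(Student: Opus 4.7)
The plan is to adapt Ng\^o's codimension inequality from \cite{NgoBonn}, exploiting the canonical identification $\Lie(A/B)\cong\Omega_B$ supplied by Proposition~\ref{pr:Lie}. By Proposition~\ref{pp:delta semicontinuous}, the set $B_{\geq k}:=\{b\in B:\delta(A_b)\geq k\}$ is closed. Let $Z\subset B_{\geq k}$ be an irreducible component; shrinking $B$ if necessary, we may assume $\delta(A_b)=d$ for some $d\geq k$ on a dense open of $Z$, and it suffices to show $\codim_B Z\geq d$. Pick a geometric generic point $\bar b$ of $Z$ and let $H\subset A_{\bar b}$ be the maximal connected affine subgroup. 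Then $\mathfrak{h}:=\Lie H$ is a $d$-dimensional subspace of $\Lie A_{\bar b}=\Omega_{B,\bar b}$.

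The key claim is the tangent estimate $T_{\bar b}Z\subset\mathfrak h^\perp$, where $\perp$ refers to the canonical pairing $T_{\bar b}B\otimes\Omega_{B,\bar b}\to k(\bar b)$; this immediately yields $\dim T_{\bar b}Z\leq g-d$, hence the desired codimension bound. Since $\mathrm{char}\,k=0$ and $A_{\bar b}$ is commutative, $H$ decomposes as $H=T\times U$ with $T\cong\mathbb{G}_m^t$ a torus and $U\cong\mathbb{G}_a^u$ a vector group, so $\mathfrak h=\Lie T\oplus\Lie U$. It suffices to prove the orthogonality separately for $\Lie T$ and $\Lie U$. For $\Lie T$, the cocharacter lattice $X_*(T)\subset\Lie T$ is discrete and spans $\Lie T$ over $k(\bar b)$, so it is enough to check the orthogonality against each $\lambda\in X_*(T)$. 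I would prove this by extending $\lambda$ to a homomorphism $\mathbb{G}_m\to A|_V$ over an \'etale neighborhood $V\to Z$ of $\bar b$ (using rigidity of tori in families over a connected base, together with the constancy of $\delta$ on $Z$ to guarantee existence of such a lift); its derivative is a section $\tilde\lambda\in\Gamma(V,\Omega_B|_V)$ recovering $\lambda$ at $\bar b$, and the fact that $\tilde\lambda_b$ is an integral cocharacter for every $b\in V$ forces $\tilde\lambda$ to be locally constant along $Z$. Translating this constancy through the symplectic pairing on $X$ yields $\langle v,\lambda\rangle=0$ for $v\in T_{\bar b}Z$. A parallel argument, using rigidity of additive one-parameter subgroups, handles the generators of $\Lie U$.

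The main obstacle is the extension of the affine decomposition $H=T\times U$ from the geometric fiber over $\bar b$ to a product of flat commutative group subschemes of $A|_Z$ over (an \'etale neighborhood of) $Z$. This rests on showing that the maximal affine subgroups of the fibers of $A$ over points of $Z$ assemble into a flat commutative group subscheme of relative dimension $d$ over $Z$; the key input is the constancy of $\delta$ on $Z$ combined with upper-semicontinuity of stabilizer dimensions from Proposition~\ref{pp:delta semicontinuous}. Once such a group subscheme is constructed, the characteristic-zero Chevalley decomposition extends to an \'etale neighborhood of $\bar b$ and the rigidity arguments for tori and vector groups go through as sketched, completing the codimension estimate.
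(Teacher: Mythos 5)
Your intermediate claim $T_{\bar b}Z\subset\mathfrak h^\perp$ is a correct target (it does imply the codimension bound), but the mechanism you propose for proving it does not work, and the paper's proof is entirely different. The decisive gap is the step ``the fact that $\tilde\lambda_b$ is an integral cocharacter for every $b\in V$ forces $\tilde\lambda$ to be locally constant along $Z$,'' followed by ``translating this constancy through the symplectic pairing yields $\langle v,\lambda\rangle=0$.'' First, what rigidity of tori gives you is that the cocharacter lattice $X_*(T_V)$ is a local system of \emph{abstract} lattices; since the embedding $\Lie T_b\hookrightarrow\Omega_{B,b}$ varies with $b$, this does not make $\tilde\lambda$ constant as a section of the vector bundle $\Omega_B|_V$ in any sense. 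Second, even if $\tilde\lambda$ were constant in some trivialization, constancy of a $1$-form along $Z$ says nothing about whether it annihilates $TZ$ (the constant form $dx$ does not annihilate the $x$-axis); the orthogonality is exactly the statement to be proved, and no argument is supplied. Third, the ``parallel argument using rigidity of additive one-parameter subgroups'' fails outright: $\Hom(\mathbb{G}_{\mathrm a},\mathbb{G}_{\mathrm a})$ is a $k$-line in characteristic zero, so additive one-parameter subgroups form continuous families and carry no integral structure whatsoever. Finally, the spreading-out of the Chevalley decomposition to a flat affine subgroup scheme over a neighborhood of $\bar b$, which you yourself flag as ``the main obstacle,'' is left unresolved.

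The paper's proof never leaves the total space $X$ and needs none of this. Setting $Y=\{x\in X:\dim\St_x\ge k\}$, one has $\{b:\delta(A_b)\ge k\}=\pi_X(Y)$ by Proposition~\ref{pp:delta and st}. The key point is that the infinitesimal action $\Lie(A/B)_{\pi_X(x)}\to\cT_xX$ is adjoint to $d\pi_X(x)$ under $\omega$, so $\dim\St_x\ge k$ if and only if $\rk d\pi_X(x)\le g-k$; hence $d(\pi_X|_Y)$ has rank at most $g-k$ everywhere, and generic smoothness (characteristic zero) gives $\dim\pi_X(Y)\le g-k$. This adjointness, applied at a point $x$ fixed by the affine part of $A_b$, is precisely the missing link between $\mathfrak h$ and $T_{\bar b}Z$ in your sketch: $\mathfrak h$ lies in the kernel of the infinitesimal action at such an $x$, hence annihilates $\mathrm{im}\,d\pi_X(x)\supset T_{\bar b}Z$. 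If you want to salvage your tangent-space formulation, this is the route to take; the cocharacter rigidity apparatus should be discarded.
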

\begin{proof}
As in the proof of Proposition~\ref{pp:delta semicontinuous}, put
\[
    Y:=\{x\in X:\dim(\St_x)\ge k\},
\]
where $\St_x$ is the stabilizer of $x$, and  recall that
\[
    \{b\in B:\delta(A_b)\ge k\}=\pi_X(Y)
\]
by Proposition~\ref{pp:delta and st}.

On the other hand, for every point $x\in X$, the infinitesimal action of $A$ at $x$ is a morphism
\[
    \Lie(A/B)_{\pi_X(x)}\to\cT_xX,
\]
whose adjoint is identified with
\[
    d\pi_X(x):\cT_xX\to\cT_{\pi_X(x)}B
\]
by the symplectic form. Therefore,
\[
    Y=\{x\in X:\rk d\pi_X(x)\le g-k\}.
\]
In particular, the rank of the differential of the restriction
\[
    \pi_X|_Y:Y\to B
\]
never exceeds $g-k$, and therefore $\dim(\pi_X(Y))\le g-k$ because $\pi_X|_Y$ must be smooth generically.
\end{proof}

This completes the proof of Theorem~\ref{th:IntegrableSystems}.

\bibliographystyle{alphanum}
\bibliography{PartialFourierMukai}

\begin{thebibliography}{MRV2}

\bibitem[AIK]{AltmanIarrobinoKleiman}
Allen~B. Altman, Anthony Iarrobino, and Steven~L. Kleiman.
\newblock Irreducibility of the compactified {J}acobian.
\newblock In {\em Real and complex singularities ({P}roc. {N}inth {N}ordic
  {S}ummer {S}chool/{NAVF} {S}ympos. {M}ath., {O}slo, 1976)}, pages 1--12.
  Sijthoff and Noordhoff, Alphen aan den Rijn, 1977.

\bibitem[Ari1]{ArinkinJacobians}
Dima Arinkin.
\newblock Cohomology of line bundles on compactified {J}acobians.
\newblock {\em Math. Res. Lett.}, 18(6):1215--1226, 2011.

\bibitem[Ari2]{ArinkinAutoduality}
Dima Arinkin.
\newblock Autoduality of compactified {J}acobians for curves with plane
  singularities.
\newblock {\em J. Algebraic Geom.}, 22(2):363--388, 2013.

\bibitem[Art]{ArtinAlgebraization}
Michael Artin.
\newblock Algebraization of formal moduli. {I}.
\newblock In {\em Global {A}nalysis ({P}apers in {H}onor of {K}. {K}odaira)},
  pages 21--71. Univ. Tokyo Press, Tokyo, 1969.

\bibitem[BD]{BeilinsonDrinfeldHitchin}
Alexander Beilinson and Vladimir Drinfeld.
\newblock Quantization of {H}itchin's integrable system and {H}ecke
  eigensheaves.
\newblock
  http://www.math.uchicago.edu/$\sim$mitya/langlands/hitchin/BD-hitchin.pdf,
  1991.

\bibitem[BLR]{BLR_NeronModels}
Siegfried Bosch, Werner L{\"u}tkebohmert, and Michel Raynaud.
\newblock {\em N\'eron models}, volume~21 of {\em Ergebnisse der Mathematik und
  ihrer Grenzgebiete (3) [Results in Mathematics and Related Areas (3)]}.
\newblock Springer-Verlag, Berlin, 1990.

\bibitem[BM]{BridgelandMaciocia}
Tom Bridgeland and Antony Maciocia.
\newblock Fourier-{M}ukai transforms for {$K3$} and elliptic fibrations.
\newblock {\em J. Algebraic Geom.}, 11(4):629--657, 2002.

\bibitem[Bor]{BorelLinAlgGrps}
Armand Borel.
\newblock {\em Linear algebraic groups}, volume 126 of {\em Graduate Texts in
  Mathematics}.
\newblock Springer-Verlag, New York, second edition, 1991.

\bibitem[Che]{Chevalley}
Claude Chevalley.
\newblock Une d\'emonstration d'un th\'eor\`eme sur les groupes alg\'ebriques.
\newblock {\em J. Math. Pures Appl. (9)}, 39:307--317, 1960.

\bibitem[Con]{BrianConradChevalley}
Brian Conrad.
\newblock A modern proof of {C}hevalley's theorem on algebraic groups.
\newblock {http://math.stanford.edu/$\sim$conrad/papers/chev.pdf}.

\bibitem[DG]{SGA3-1}
Michel Demazure and Alexander Grothendieck.
\newblock {\em Sch\'emas en groupes. {I}: {P}ropri\'et\'es g\'en\'erales des
  sch\'emas en groupes}.
\newblock S\'eminaire de G\'eom\'etrie Alg\'ebrique du Bois Marie 1962/64 (SGA
  3). Dirig\'e par M. Demazure et A. Grothendieck. Lecture Notes in
  Mathematics, Vol. 151. Springer-Verlag, Berlin, 1970.

\bibitem[DP]{DonagiPantev}
Ron Donagi and Tony Pantev.
\newblock Langlands duality for {H}itchin systems.
\newblock {\em Invent. Math.}, 189(3):653--735, 2012.

\bibitem[GI]{SGA6}
Alexander Grothendieck and Luc Illusie.
\newblock {\em T{\'e}orie des intersections et th{\'e}or{\`e}me de
  Riemann-Roch: SGA 6}, volume 225.
\newblock Springer Berlin/Heidelberg, 1971.

\bibitem[Gro1]{EGAII}
Alexander Grothendieck.
\newblock \'{E}l\'ements de g\'eom\'etrie alg\'ebrique. {II}. \'{E}tude globale
  \'el\'ementaire de quelques classes de morphismes.
\newblock {\em Inst. Hautes \'Etudes Sci. Publ. Math.}, (8):222, 1961.

\bibitem[Gro2]{EGAIV-1}
Alexander Grothendieck.
\newblock \'{E}l\'ements de g\'eom\'etrie alg\'ebrique. {IV}. \'{E}tude locale
  des sch\'emas et des morphismes de sch\'emas, {P}remi\`ere {P}artie.
\newblock {\em Inst. Hautes \'Etudes Sci. Publ. Math.}, (20):259, 1964.

\bibitem[Gro3]{EGAIV-3}
Alexander Grothendieck.
\newblock \'{E}l\'ements de g\'eom\'etrie alg\'ebrique. {IV}. \'{E}tude local
  des sch\'emas et des morphismes des sch\'emas, {T}roisi\`eme {P}artie.
\newblock {\em Inst. Hautes \'Etudes Sci. Publ. Math.}, (28):255, 1966.

\bibitem[Gro4]{EGAIV.4}
Alexander Grothendieck.
\newblock \'{E}l\'ements de g\'eom\'etrie alg\'ebrique. {IV}. \'{E}tude locale
  des sch\'emas et des morphismes de sch\'emas, {Q}uatri\`eme {P}artie.
\newblock {\em Inst. Hautes \'Etudes Sci. Publ. Math.}, (32):361, 1967.

\bibitem[Gro5]{FGAV}
Alexander Grothendieck.
\newblock Technique de descente et th\'eor\`emes d'existence en g\'eom\'etrie
  alg\'ebrique. {V}. {L}es sch\'emas de {P}icard: th\'eor\`emes d'existence.
\newblock In {\em S\'eminaire {B}ourbaki, {V}ol.\ 7}, pages Exp.\ No.\ 232,
  143--161. Soc. Math. France, Paris, 1995.

\bibitem[Gro6]{FGAVI}
Alexander Grothendieck.
\newblock Technique de descente et th\'eor\`emes d'existence en g\'eom\'etrie
  alg\'ebrique. {VI}. {L}es sch\'emas de {P}icard: propri\'et\'es
  g\'en\'erales.
\newblock In {\em S\'eminaire {B}ourbaki, {V}ol.\ 7}, pages Exp.\ No.\ 236,
  221--243. Soc. Math. France, Paris, 1995.

\bibitem[Gro7]{GrothendieckSchemasHilbert}
Alexander Grothendieck.
\newblock Techniques de descente et th\'eor\`emes d'existence en g\'eom\'etrie
  alg\'ebrique. {IV}. {L}es sch\'emas de {H}ilbert.
\newblock In {\em S\'eminaire {B}ourbaki, {V}ol.\ 6}, pages Exp.\ No.\ 221,
  249--276. Soc. Math. France, Paris, 1995.

\bibitem[Kle]{KleimanFGA}
Steven~L. Kleiman.
\newblock The {P}icard scheme.
\newblock In {\em Fundamental algebraic geometry}, volume 123 of {\em Math.
  Surveys Monogr., Chapter~9}, pages 235--321. Amer. Math. Soc., Providence,
  RI, 2005.

\bibitem[Lau]{LaumonFourierGeneralized}
G\'erard Laumon.
\newblock Transformation de {F}ourier g\'en\'eralis\'ee.
\newblock {\em ArXiv e-prints, alg-geom/9603004}, 1996.

\bibitem[LMB]{LaumonMoretBailly}
G{\'e}rard Laumon and Laurent Moret-Bailly.
\newblock {\em Champs alg\'ebriques}, volume~39 of {\em Ergebnisse der
  Mathematik und ihrer Grenzgebiete. 3. Folge. A Series of Modern Surveys in
  Mathematics [Results in Mathematics and Related Areas. 3rd Series. A Series
  of Modern Surveys in Mathematics]}.
\newblock Springer-Verlag, Berlin, 2000.

\bibitem[{Mil}]{MilneChevalley}
James~S. {Milne}.
\newblock {A Proof of the Barsotti-Chevalley Theorem on Algebraic Groups}.
\newblock {\em ArXiv e-prints, Math 1311.6060}, 2013.

\bibitem[MRV1]{MeloETalJacobians}
Margarida {Melo}, Antonio {Rapagnetta}, and Filippo {Viviani}.
\newblock {Fourier-Mukai and autoduality for compactified Jacobians. I}.
\newblock {\em ArXiv e-prints, 1207.7233}, 2012.

\bibitem[MRV2]{MeloETalJacobiansII}
Margarida {Melo}, Antonio {Rapagnetta}, and Filippo {Viviani}.
\newblock {Fourier-Mukai and autoduality for compactified Jacobians II}.
\newblock {\em ArXiv e-prints, 1308.0564}, 2013.

\bibitem[Muk]{Mukai}
Shigeru Mukai.
\newblock Duality between {$D(X)$} and {$D(\hat X)$} with its application to
  {P}icard sheaves.
\newblock {\em Nagoya Math. J.}, 81:153--175, 1981.

\bibitem[Mum]{MumfordAbelian}
David Mumford.
\newblock {\em Abelian varieties}.
\newblock Tata Institute of Fundamental Research Studies in Mathematics, No. 5.
  Published for the Tata Institute of Fundamental Research, Bombay, 1970.

\bibitem[Ng{\^o}1]{NgoBonn}
Bao~Ch{\^a}u Ng{\^o}.
\newblock Decomposition theorem and abelian fibration.
\newblock Talk given in {B}onn in a conference in honor of {M.~Rappoport}.

\bibitem[Ng{\^o}2]{NgoIHES}
Bao~Ch{\^a}u Ng{\^o}.
\newblock Le lemme fondamental pour les alg\`ebres de {L}ie.
\newblock {\em Publ. Math. Inst. Hautes \'Etudes Sci.}, (111):1--169, 2010.

\bibitem[Pol]{PolishchukAbelian}
Alexander Polishchuk.
\newblock {\em Abelian varieties, theta functions and the {F}ourier transform},
  volume 153 of {\em Cambridge Tracts in Mathematics}.
\newblock Cambridge University Press, Cambridge, 2003.

\end{thebibliography}
\end{document}